\newtheorem{theorem}{Theorem}[section]
\newtheorem{proposition}[theorem]{Proposition}
\newtheorem{lemma}[theorem]{Lemma}
\newtheorem{corollary}[theorem]{Corollary}
\theoremstyle{definition}
\newtheorem{remark}[theorem]{Remark}
\numberwithin{equation}{section}
\newcommand{\N}{\mathbb{N}}
\newcommand{\Z}{\mathbb{Z}}
\newcommand{\R}{\mathbb{R}}
\newcommand{\cont}{\mathcal{C}}
\newcommand{\sch}{\mathcal{S}}
\newcommand{\ld}{{L^2}}
\newcommand{\lp}{{L^p}}
\newcommand{\li}{{L^\infty}}
\newcommand{\hs}{{H^s}}
\newcommand{\supp}{\mathrm{supp}\,}
\newcommand{\weak}{\rightharpoonup}
\newcommand{\weaks}{\overset{\ast}{\rightharpoonup}}
\newcommand{\into}{\hookrightarrow}
\newcommand{\impl}{\Longrightarrow}
\newcommand{\pt}{\partial}
\newcommand{\dt}{\partial_t}
\newcommand{\nb}{\nabla}
\newcommand{\dd}{{\rm\,d}}
\newcommand{\ddt}{\frac{\dd}{\dd t}}
\newcommand\divv{\mathrm{div}}
\newcommand\curl{\mathrm{curl}}
\newcommand{\T}{\mathcal{T}}
\newcommand{\RR}{\mathcal{R}}
\newcommand{\id}{I}
\newcommand{\al}{\alpha}
\newcommand{\om}{\omega}
\newcommand{\Om}{\Omega}
\newcommand{\A}{\mathcal{A}}
\newcommand{\B}{\mathcal{B}}
\newcommand{\D}{\mathcal{D}}
\newcommand{\E}{\mathbb{E}}
\newcommand{\e}{\mathcal{E}}
\newcommand{\f}{\mathcal{F}}
\newcommand{\U}{\mathbb{U}}
\newcommand{\hsm}{{H^{s-1}}}
\newcommand{\hsmd}{{H^{s-2}}}
\newcommand{\hsp}{{H^{s+1}}}
\newcommand{\hspd}{{H^{s+2}}}
\newcommand{\hsig}{{H^\sigma}}
\newcommand{\del}{\Delta}
\newcommand{\eps}{\varepsilon}
\newcommand{\phii}{\varphi}
\newcommand{\lut}{L_T^1}
\newcommand{\ldt}{L_T^2}
\newcommand{\lit}{L_T^\infty}
\newcommand{\lutt}{\widetilde{L_T^1}}
\newcommand{\litt}{\widetilde{L_T^\infty}}
\newcommand{\ct}{C_T}
\newcommand{\cut}{C_T^1}
\newcommand{\ctt}{\widetilde{C_T}}
\newcommand{\sig}{\sigma}
\newcommand{\uz}{u_0}
\newcommand{\rz}{\rho_0}
\newcommand{\rez}{\rho_0}
\newcommand{\uzeps}{u_0}
\newcommand{\vzeps}{U_0}
\newcommand{\re}{\rho_\eps}
\newcommand{\aeps}{a_\eps}
\newcommand{\ue}{u_\eps}
\newcommand{\teeps}{T_\eps}
\newcommand{\teepse}{T_{\eps,*}}
\newcommand{\ve}{U_\eps}
\newcommand{\Pe}{\Pi_\eps}
\newcommand{\eeps}{\E_\eps}
\newcommand{\ps}{[s]}
\newcommand{\nbp}{\nabla^{\bot}}
\newcommand{\up}{u^{\bot}}
\newcommand{\tns}{\tau^\mathrm{NS}}
\newcommand{\todd}{\tau^\mathrm{odd}}
\newcommand{\muodd}{\mu_0}
\title{Well-posedness for 2D non-homogeneous incompressible fluids with general density-dependent odd viscosity}
\author{Matthieu Pageard\thanks{Université Claude Bernard Lyon 1, CNRS, Centrale Lyon, INSA Lyon, Université Jean Monnet, ICJ UMR5208,
69622 Villeurbanne, France. Email address: \url{pageard@math.univ-lyon1.fr}.}}
\date{\today}
\begin{document}

\maketitle

\abstract{We study the initial value problem for a system of equations describing the motion of two-dimensional non-homogeneous incompressible fluids exhibiting odd (non-dissipative) viscosity effects. We consider the complete odd viscous stress tensor with a general density-dependent viscosity coefficient $f(\rho)$. Under suitable assumptions, we prove the local existence and uniqueness of strong solutions in $\hs(\R^2)$ $(s>2)$, for a class of viscosity coefficients covering the particular case $f(\rho)=a\rho^\alpha+b$ for any $(a,b,\alpha)\in\R^3$, generalising the result of \cite{fgs} devoted to the case $f(\rho)=\rho$. Additionally, we are able to do so without requiring the initial density variation to belong to $\ld(\R^2)$. As a major step of the proof, we exhibit an effective velocity for this sytem, generalising the so-called ``Elsässer formulation'' recently derived in \cite{fv}.}

\medskip\noindent
{\textbf{2020 Mathematics Subject Classification:} 35Q35 (primary); 35B65, 76B03, 76D09 (secondary).}

\medskip\noindent
{\textbf{Keywords:} incompressible fluids; variable density; odd viscosity; general density-dependent viscosity coefficient; local well-posedness.}

\tableofcontents

\section{Introduction}
\label{sec:intro}

When a fluid is set into motion, different parts of the fluid may move at different speeds. Mathematically, this is characterised by the appearance of inhomogeneities in its velocity field, or \emph{velocity gradients}. Viscosity quantifies a fluid’s ability to resist deformation due to velocity gradients, by exerting internal stresses. At each point of the fluid, the local average of these stresses is called \emph{viscous stress}.  When the viscous stresses are proportional to the velocity gradients, the fluid is said to be \emph{Newtonian}, and the proportionality coefficient is called \emph{viscosity}. This relationship takes the form\footnote{We adopt the Einstein convention of summation over repeated indices.} \cite{batchelor1967introduction}
\begin{equation}
    \label{tau}
    \tau_{ij} = \eta_{ijkl}\,\pt_l u_k,
\end{equation}
where $\tau_{ij}$ is the \emph{viscous stress tensor}, $\eta_{ijkl}$ is the \emph{viscosity tensor}, and $u$ is the velocity field. At the microscopic level, the viscous stress manifests as the action of intermolecular forces, converting momentum into disordered molecular motion, or heat. We refer to \cite[Chapter 1]{batchelor1967introduction} for further insight on this phenomenon.

\medskip
This description, commonly used to define viscosity, naturally relates it to the dissipation of kinetic energy. However, this connection is not universal, as only the symmetric part of the viscosity tensor contributes to energy dissipation. In order to show this, let us introduce the compressible Navier--Stokes equations in $\R^d$ ($d\ge1$)\footnote{We define the divergence of a matrix $A$ as the vector $(\divv\,A)_i = \sum_{j}\pt_jA_{ji}$.}:
\begin{equation}
    \label{ns}
    \left\{
    \begin{aligned}
    & \partial_t \rho + \divv(\rho u) = 0, \\
    & \partial_t(\rho u) + \divv(\rho u\otimes u) = -\nb\pi + \divv\,\tau,
\end{aligned}
\right.
\end{equation}
where $\rho=\rho(t,x)\ge0$ is the density, $u=u(t,x)\in\R^d$ is the velocity field, $\pi=\pi(t,x)\in\R$ is the pressure, and $\tau=\tau_{ij}(t,x)\in\mathrm{Mat}_{d\times d}(\R)$ is the viscous stress tensor. 

\medskip
The total kinetic energy of a fluid governed by equations \eqref{ns} is\footnote{For a function $f=f(t,x)$ with $x\in\R^d$, we denote $\int f \dd x = \int_{\R^d} f(t,x) \dd x$.} \cite{landau1987fluid}
\[
E_\mathrm{kin}(t) := \frac{1}{2}\int \rho|u|^2\dd x.
\]
Elementary computations show that the total variation of kinetic energy is\footnote{We denote by $A:B$ the canonical inner product of two matrices $A$ and $B$.}
\[
\dot E_\mathrm{kin}(t) = - \int \nb\pi\cdot u\dd x - \int \tau:\nb u \dd x.
\]
From expression \eqref{tau}, we see that, after decomposing $\eta=\eta^\mathrm{S}+\eta^\mathrm{A}$ into its symmetric and antisymmetric components
\[
\eta_{ijkl}^\mathrm{S} = \frac{1}{2}\big(\eta_{ijkl}+\eta_{klij}\big),\qquad \eta_{ijkl}^\mathrm{A} = \frac{1}{2}\big(\eta_{ijkl}-\eta_{klij}\big),
\]
the contribution of the viscous stress tensor is 
\[
\int \tau:\nb u\dd x = \int \eta_{ijkl}^\mathrm{S}(\pt_lu_k)(\pt_iu_j)\dd x.
\]
Therefore, the antisymmetric part $\eta^\mathrm{A}$ of the viscosity tensor does not contribute to the total variation of kinetic energy, and corresponds to a transverse, \emph{non-dissipative} stress. It is often referred to as \emph{odd viscosity} (because $\eta_{klij}^\mathrm{A}=-\eta_{ijkl}^\mathrm{A}$), or \emph{Hall viscosity} in the context of condensed matter physics, \cite{avron1995viscosity}.

\medskip
Odd viscosity usually arises in physical systems where the microscopic dynamics do not obey time-reversal symmetry. Such symmetry breaking may occur, for instance, due to self-rotating particles, or in the presence of an external magnetic field, \cite{fruchart2023odd}. Odd viscosity can produce counterintuitive effects, as illustrated by the following example, see \emph{e.g.} \cite{avron1998odd,LapaHughes2014}. A rotating cylinder immersed in a fluid with classical dissipative viscosity will encounter a resistance opposite to the direction of rotation, whereas, when immersed in a fluid with odd viscosity, it will undergo a \emph{radial} pressure, proportional to the rate of rotation, and oriented inwards or outwards, depending on the direction of rotation. Systems exhibiting odd viscosity effects are, \emph{e.g.}, polyatomic gases under a magnetic field \cite{polyatomic}, magnetised plasmas \cite{plasma}, gases under rotation \cite{gasrotation}, superfluids \cite{superfluid}, chiral active fluids \cite{chiralactive}, or vortex matter \cite{vortex}. We refer to \cite{fruchart2023odd} for an overview of this subject. Planar fluids are of particular interest, being the only situation where odd viscosity is compatible with isotropy, \cite{avron1998odd}.

\paragraph{The viscous stress tensor in dimension 2.} In an isotropic two-dimensional fluid, the viscosity tensor $\eta$ has 6 independent coefficients, and takes the form\footnote{In this expression, we use the volume viscosity $\lambda$ instead of the bulk viscosity $\zeta$. This results in the change $\zeta=\lambda+\mu$ in the expression (10) of \cite{fruchart2023odd}.} \cite{fruchart2023odd}
\begin{equation}
    \label{eta:2d}
    \begin{split}
        \eta_{ijkl} = \,\mu\,&\big(\delta_{ik}\delta_{jl}+\delta_{il}\delta_{jk}\big) + \lambda\,\delta_{ij}\delta_{kl} + \mu_\mathrm{rot}\,\epsilon_{ij}\epsilon_{kl} \\
        +\,\muodd\,&\big(\epsilon_{ik}\delta_{jl}+\epsilon_{jl}\delta_{ik}\big) - \mu_1\,\epsilon_{ij}\delta_{kl} -\mu_2\,\delta_{ij}\epsilon_{kl},
    \end{split}
\end{equation}
where $\epsilon$ denotes the Levi-Civita symbol, whose coefficients are given by the matrix 
\[
J 
=
\begin{pmatrix}
0 & 1 \\
-1 & 0 
\end{pmatrix}.
\]
The pairs $(\mu,\muodd)$, $(\lambda,\mu_1)$ and $(\mu_\mathrm{rot},\mu_2)$ are related respectively to shear, volumetric and rotational stresses, while the triplets $(\mu,\lambda,\mu_\mathrm{rot})$ and $(\muodd,\mu_1,\mu_2)$ correspond respectively to dissipative and odd viscosities. These coefficients are not constant in principle, and usually depend on the state variables\footnote{The \emph{state variables} entirely describe the state of a given mass of fluid at equilibrium. All the other quantities are functions of the state variables through the \emph{equations of state}, \cite{batchelor1967introduction}.} of the system, namely temperature and density, \cite{batchelor1967introduction}.

\medskip
Let us notice that the viscosity tensor corresponding to the standard compressible Navier--Stokes equations, namely
\[
\eta_{ijkl}^\mathrm{NS} = \mu\,\big(\delta_{ik}\delta_{jl}+\delta_{il}\delta_{jk}\big) + \lambda\,\delta_{ij}\delta_{kl}
\]
is symmetric, as it satisfies $\eta_{klij}^\mathrm{NS}=\eta_{ijkl}^\mathrm{NS}$. The viscosity of such flows is therefore of fully dissipative nature. 

\medskip
Plugging the expression \eqref{eta:2d} of the viscosity tensor into \eqref{tau}, we can write the corresponding viscous stress tensor 
\begin{equation}
    \label{tau:2d}
    \begin{split}
        \tau = \,\mu&\big(\nb u + (\nb u)^\mathrm{T}\big) + \lambda(\divv\,u)I - \mu_\mathrm{rot}\,\om J \\
        -\,\muodd&\big(\nb\up + \nbp u\big) - \mu_1(\divv\,u)J + \mu_2\,\om I,
    \end{split}
\end{equation}
where $I$ is the identity matrix, $\om=\curl(u)=\pt_1u_2 - \pt_2u_1$ is the vorticity, $\up=(-u_2,u_1)$, and $\nbp=(-\pt_2,\pt_1)^\mathrm{T}$. This expression is the most general form of the viscous stress tensor for an isotropic planar fluid.

\paragraph{The system of equations.} In this work, we consider \emph{incompressible} fluids governed by equations \eqref{ns}, with the viscous stress tensor
\[
\tau^\mathrm{odd} := -\mu_0\big(\nb\up + \nbp u\big).
\]
The viscosity coefficient $\muodd$ will be assumed to depend only on density, as \emph{e.g.} in \cite{hamiltonian}: we set
\[
\mu_0 = f(\rho),
\]
where $f:\R^+\to\R$ is a given function. 

\medskip
The full system of equations in $\R^+\times\R^2$, given by \eqref{ns} with $\tau=\todd$, and supplemented with the divergence-free condition on $u$, reads
\begin{equation}
\label{odd}
\left\{
\begin{aligned}
& (\dt + u\cdot\nb)\rho = 0, \\
& \rho(\dt+u\cdot\nb)u + \nabla\pi + \divv\big(f(\rho)(\nabla u^{\bot} + \nabla^{\bot}u)\big) = 0, \\
& \divv\,u = 0,
\end{aligned}
\right.
\end{equation}
where $\rho=\rho(t,x)\ge0$, $u=u(t,x)\in\R^2$, and $\pi=\pi(t,x)\in\R$.

\paragraph{Notation.} For a vector 
\[
u=(u_1,u_2)\in\R^2,
\]
we define its gradient as the matrix
\[
\nb u:=
\left(\begin{array}{@{}c|c@{}}
\nb u_1 & \nb u_2
\end{array}\right)
=
\begin{pmatrix}
\pt_1 u_1 & \pt_1 u_2 \\
\pt_2 u_1 & \pt_2 u_2 
\end{pmatrix}.
\]
We introduce the rotations 
\[
\up:=(-u_2,u_1),\qquad 
\nbp := 
\begin{pmatrix}
-\pt_2 \\
\pt_1 
\end{pmatrix},
\]
and define
\[
\nb\up := 
\left(\begin{array}{@{}c|c@{}}
-\nb u_2 & \nb u_1
\end{array}\right)
=
\begin{pmatrix}
-\pt_1 u_2 & \pt_1 u_1 \\
-\pt_2 u_2 & \pt_2 u_1 
\end{pmatrix},\qquad
\nbp u := 
\left(\begin{array}{@{}c|c@{}}
\nbp u_1 & \nbp u_2
\end{array}\right)
=
\begin{pmatrix}
-\pt_2 u_1 & -\pt_2 u_2 \\
\pt_1 u_1 & \pt_1 u_2 
\end{pmatrix}.
\]

\subsection{Mathematical structure and previous results} 

We are interested in the initial value problem for system \eqref{odd}. The main challenge in studying this system lies in the fact that the viscous stress tensor $\todd$ is non-dissipative: one has 
\[
\int \todd:\nb u \dd x = \int f(\rho) \big(\nb\up:\nb u\big) \dd x + \int f(\rho) \big(\nbp u:\nb u\big) \dd x = 0.
\]
Hence, in contrast to the standard Navier--Stokes equations, no gain of regularity can be expected from it. On the other hand, we may decompose the odd viscosity term as 
\begin{equation}
    \label{oddterm}
    \divv\big(f(\rho)(\nabla u^{\bot} + \nabla^{\bot}u)\big) = \big(\nb f(\rho)\cdot\nb\big)\up + f(\rho)\del\up + \big(\nb f(\rho)\cdot\nbp\big)u.
\end{equation}
From this expression, and assuming that the density is bounded away from vacuum ($\rho\ge\rho_*>0$), one can rewrite the second equation in \eqref{odd} as the transport equation 
\[
(\pt_t+u\cdot\nb)u = -\frac{1}{\rho}\nb\pi - \frac{1}{\rho}\big(\nb f(\rho)\cdot\nb\big)\up - \frac{f(\rho)}{\rho}\del\up - \frac{1}{\rho}\big(\nb f(\rho)\cdot\nbp\big)u.
\]
Because the odd viscosity term contains two derivatives in $u$ and one derivative in $\rho$, the classical results from transport theory are not directly applicable, as one cannot merely consider it as a forcing term. In the following, we present previous results related to the mathematical study of fluids with odd viscosity, as well as the methods developed to handle the loss of derivatives caused by the odd viscosity term.

\paragraph{Previous results.} To the best of our knowledge, the first mathematical work related to fluids with odd viscosity is \cite{GraneroBelinchonOrtega2022}. The first study on the initial value problem for system \eqref{odd} is \cite{fgs}, where the authors discarded the term $\nbp u$. Moreover, they supposed $f(\rho)=\rho$. One may comment more on the importance of this assumption in \cite{fgs}. Obviously, even with these simplifications, the loss of derivatives mentioned above is still present. However, in this case, an underlying hyperbolic structure appears in system \eqref{odd}, allowing to overcome this issue. More precisely, the identification of the quantities
\[
\om := \curl(u) = \pt_1u_2-\pt_2u_1,\qquad \theta := \curl(\rho u) - \del\rho
\]
leads to the system of transport equations 
\[
\left\{
\begin{aligned}
& \big(\dt + (u-\nbp\log\rho)\cdot\nb\big)\om  = -\nbp\left(\frac{1}{\rho}\right)\cdot\nb(\pi-\rho\om) - \B(\nb u,\nb^2\log\rho), \\
& (\dt + u\cdot\nb)\theta = \frac{1}{2}\nbp\rho\cdot\nb|u|^2 + \B(\nb u,\nb^2\rho).
\end{aligned}
\right.
\]
Starting from initial data $\rho_0-1\in\hsp(\R^2)$ and $u_0\in\hs(\R^2)$ ($s>2$), it was proved that the modified pressure gradient $\nb(\pi-\rho\om)$ and the bilinear terms $\B$ belong to $\hsm(\R^2)$, while the new transport field for the vorticity $\om$, namely $u-\nbp\log\rho$, belongs to $\hs(\R^2)$ and is divergence-free. From these observations, the authors were able to gather $\hsm$ regularity for the new variables $\om$ and $\theta$, and therefore the desired regularity $\rho-1\in\hsp(\R^2)$ and $u\in\hs(\R^2)$ for the density and velocity fields.

\medskip
The first attempt to tackle the full odd viscosity system \eqref{odd} was made in \cite{fv}, still in the case $f(\rho)=\rho$. This result arises from the fundamental identity 
\begin{equation}
    \label{gradients}
    \nb\up - \nbp u = -\om\id
\end{equation}
which allows to write 
\[
\nb\up+\nbp u = 2\nb\up + \om\id.
\]
Applying the divergence operator, the second term on the right is then a gradient, that can be absorbed in the pressure term, leading to the same quantity $\nb(\pi-\rho\om)$ appearing in the approach of \cite{fgs}. One is then reconducted to the system studied in \cite{fgs}, and a similar well-posedness theory naturally follows. The initial value problem is then investigated in endpoint Besov spaces $B_{\infty,r}^s(\R^2)$ ($s>1$ or $s=r=1$). The crucial point is the identification of an \emph{effective velocity}, which may be written as 
\[
W_\mathrm{eff} := u - 2\nbp\log\rho,
\]
and is also divergence-free. This kind of quantity, linking the velocity field with the gradient of some function of the density, was already identified in compressible fluid mechanics, \emph{e.g.} by Lions \cite{lions1998comp} in a low Mach number model, and by Bresch-Desjardins \cite{bd2003,bd2004,bd2006} for the shallow-water (or Saint-Venant) equations. The two velocities $u$ and $W_\mathrm{eff}$ are transported by each other, giving rise to the following reformulation of the odd viscosity system \eqref{odd}:
\[
\left\{
\begin{aligned}
& (\pt_t + u\cdot\nb)\rho = 0, \\
& \rho(\pt_t + W_\mathrm{eff}\cdot\nb)u + \nb\Pi^0 = 0, \\
& \rho(\pt_t + u\cdot\nb)W_\mathrm{eff} + \nb\Pi^0 = 0, \\
& \divv\,u = \divv\,W_\mathrm{eff} = 0,
\end{aligned}
\right.
\]
where $\Pi^0 := \pi - \rho\om$. As already observed in \cite{fv}, this new system is similar to the Elsässer formulation of the ideal MHD equations, see \emph{e.g.} \cite{cobb2023elsasser} and references therein. Notice that the quantities $W_\mathrm{eff}$ and $\Pi^0$ already appeared in the approach of \cite{fgs}, but only at the level of the equations for $\om$ and $\theta$. Let us remark that this reformulation allows to obtain a greater regularity for the modified pressure: while in \cite{fgs}, we had only $\nb\Pi^0\in\hsm(\R^2)$, we have this time $\nb\Pi^0\in B_{\infty,r}^s(\R^2)$. This gain of one derivative will be crucial in our approach, as it will allow us to work directly with the analogue of the variables $u$ and $W_\mathrm{eff}$ rather than their vorticities, making the analysis much more straightforward.

\medskip
The present paper aims to extend the result of \cite{fgs} to the case of the full odd viscosity tensor with a general density-dependent viscosity coefficient. 

The analysis of fluid models with density-dependent viscosity coefficients sparked a great interest in the mathematical community over the last 30 years. Let us give a (far from complete) overview of the results obtained regarding the \emph{compressible} Navier--Stokes equations, given by system \eqref{ns} with 
\begin{equation}
    \label{nstensor}
    \tns(\rho) = \mu(\rho)\big(\nb u + (\nb u)^\mathrm{T}\big) + \lambda(\rho)(\divv\,u)I.
\end{equation}
There are two main classes of results.

The results related to the existence of strong solutions are mainly restricted to low dimensions $d=1,2$. The seminal work of Vaigant and Kazhikhov \cite{vaigant1995} establishes the global existence of classical solutions in some bounded domain $\Om\subset\R^2$, when $\mu$ is constant and $\lambda$ is a suitable power of the density. We also mention the work of Mellet and Vasseur \cite{mv2008} for global strong solutions in dimension $d=1$. Some results in higher dimensions also exist, see \emph{e.g.} Danchin \cite{danchinlagrangian2012} for local strong solutions in dimension $d\ge2$.

The literature for weak solutions is much richer. Lions \cite{lions1998comp} and Fereisl \cite{feireisl2004dynamics} prove global existence of weak solutions when the viscosity is constant. The pioneering works of Bresch-Desjardins-Lin \cite{bdl2003} and Bresch-Desjardins \cite{bd2003,bd2004,bd2006} for the viscous shallow-water equations and the compressible Navier--Stokes equations paved the way for tackling the case of density-dependent viscosities, with the introduction of the so-called BD-entropy. These results were later complemented by Mellet-Vasseur \cite{mv2007}, who derived a new logarithmic entropy inequality. These works allow for a real breakthrough with the existence of global weak solutions in dimension $d=3$, proved independently by Li-Xin \cite{lixin2015} and Vasseur-Yu \cite{vy2016existence}, later extended by Bresch-Vasseur-Yu \cite{bvy2021} to a more general viscous stress tensor.

Such results also exist for the non-homogeneous incompressible Navier--Stokes equations, given by \eqref{ns}-\eqref{nstensor} with $\divv\,u=0$. The first results regarding the existence of global weak solutions in the density-dependent case are due to Lions \cite{Lions1996incomp} and Desjardins \cite{Desjardins1997}. As for strong solutions, more results are available. The work of Danchin \cite{danchin2003density} for a constant viscosity was generalised by Abidi \cite{Abidi2007_Critique} and Abidi-Paicu \cite{AbidiPaicu2007} to the case of a density-dependent viscosity, see also Abidi-Gui-Zhang \cite{AbidiGuiZhang2012}, Huang-Wang \cite{HuangWang2015}, and Zhang \cite{Zhang2015_JDE}. We also mention some recent works on the case of a density jump, see \emph{e.g.} \cite{dm2019,dm2023,gancedo20252dnavierstokesfreeboundary} and references therein. 

\medskip
As the mathematical analysis of fluid models with odd viscosity is still very new, the only result dealing with general density-dependent viscosity coefficients in this case is the recent paper by Zimmermann \cite{zimmermann}, which investigates non-homogeneous incompressible fluids governed by system \eqref{ns} (with $\divv\,u=0$) in some domains $\Om\subset\R^2$, with a stress tensor $\tau$ displaying both the classical dissipative viscosity, as well as the odd viscosity, namely
\[
\tau^\mathrm{NS+odd}(\rho) = \mu(\rho)\big(\nb u + (\nb u)^\mathrm{T}\big) - \muodd(\rho)\big(\nb\up + \nbp u\big).
\]
For this system, the author establishes the existence of weak solutions in both the evolutionary and stationary cases.

\medskip
In this work, we extend the result of \cite{fgs} to fluids with general density-dependent odd viscosity by proving local existence and uniqueness of strong solutions to system \eqref{odd}. Before stating our result, we need to reformulate system \eqref{odd} as done in \cite{fv}.

\subsection{Reformulation of the equations}
\label{sec:reformulation}

The computations performed in this section are for the moment only formal. The goal is to rewrite \eqref{odd} as a system of transport equations. The fundamental tool for this purpose is the identity \eqref{gradients}, pointed out in \cite{fv}, allowing to decompose the odd viscosity term as 
\[
\begin{split}
     \divv\big(f(\rho)(\nabla u^{\bot} + \nabla^{\bot}u)\big) &= \divv\big(f(\rho)(-\om\id + 2\nbp u)\big) \\
     &= -\nb\big(f(\rho)\om\big) + 2\divv\big(f(\rho)\nbp u\big).
\end{split}
\]
After defining the modified pressure field
\begin{equation}
    \label{Pi}
    \Pi:=\pi-f(\rho)\om,
\end{equation}
the second equation in \eqref{odd} becomes 
\begin{equation}
    \label{moment2}
    \rho(\dt + u\cdot\nb)u + \nb\Pi + 2\divv\big(f(\rho)\nbp u\big) = 0,
\end{equation}
The new simplified divergence term above is in fact a transport term, as
\[
\divv\big(f(\rho)\nbp u\big) = (\nb f(\rho)\cdot\nbp)u = -(\nbp f(\rho)\cdot\nb)u = -f'(\rho)(\nbp\rho\cdot\nb)u.
\]
Let us assume that the density $\rho$ is positively bounded from below ($\rho\ge\rho_*>0$), and consider a function $g$ such that 
\[
g'(\rho) = \frac{2}{\rho}f'(\rho).
\]
We then have 
\[
2\divv\big(f(\rho)\nbp u\big) = -2f'(\rho)(\nbp\rho\cdot\nb)u = -\rho g'(\rho)(\nbp\rho\cdot\nb)u = -\rho(\nbp g(\rho)\cdot\nb)u.   
\]
We now define the \emph{effective velocity}
\begin{equation}
    \label{U}
    U := u - \nbp g(\rho),
\end{equation}
which is also divergence-free. The odd viscosity system \eqref{odd} thus rewrites
\begin{equation}
\label{odd3}
\left\{
\begin{aligned}
& (\dt+u\cdot\nb)\rho = 0, \\
& \rho(\dt + U\cdot\nb)u + \nabla\Pi = 0, \\
& \divv\,u = \divv\,U = 0.
\end{aligned}
\right.
\end{equation}
Therefore, the original velocity field $u$ is transported by the new effective velocity $U$. At this stage, we have drastically improved the problem of the loss of derivatives mentioned earlier. Indeed, as \eqref{odd3} is an Euler-type equation with a modified transport field, we now only lose one derivative in the variable $\rho$, the other losses having been absorbed in both $U$ and $\nb\Pi$. In order to establish the well-posedness of system \eqref{odd3}, we now thus only need to have $U\in\hs$. 

Here comes into play the ``Elsässer formulation'' derived in \cite{fv}. It turns out that the new effective velocity $U$ itself is transported by $u$, which will allow us to obtain the desired regularity on $U$. Let us now compute the Elsässer formulation of system \eqref{odd} in our context.

\medskip
From the expression \eqref{U} of the effective velocity, we can decompose
\[
\begin{split}
    (U\cdot\nb)u &= (u\cdot\nb)u - (\nbp g(\rho)\cdot\nb)u \\
    &= (u\cdot\nb)U + (u\cdot\nb)\nbp g(\rho) + (\nb g(\rho)\cdot\nbp)u.
\end{split}
\]
The second equation in \eqref{odd3} then rewrites
\[
\rho(\dt + u\cdot\nb)U + \nb\Pi + \rho\Big(\pt_t\nbp g(\rho) + (u\cdot\nb)\nbp g(\rho) + (\nb g(\rho)\cdot\nbp)u\Big) = 0.
\]
Writing that 
\[
(u\cdot\nb)\nbp g(\rho) = \nbp(u\cdot\nb)g(\rho) - (\nb g(\rho)\cdot\nbp)u,
\]
we obtain the equation 
\[
\rho(\dt + u\cdot\nb)U + \nb\Pi + \rho\nbp(\pt_t+u\cdot\nb)g(\rho) = 0.
\]
From the first equation in \eqref{odd3}, we have
\[
(\dt + u\cdot\nb)g(\rho) = g'(\rho)(\dt + u\cdot\nb)\rho = 0,
\]
which yields the desired transport equation
\[
\rho(\dt+ u\cdot\nb)U + \nb\Pi = 0.
\]
Finally, as 
\[
\nbp g(\rho)\cdot\nb\rho = g'(\rho)\nbp\rho\cdot\nb\rho = 0,
\]
we have 
\[
(\dt + U\cdot\nb)\rho = 0.
\]
We thus obtain the \emph{Elsässer formulation} of the odd viscosity system \eqref{odd}, namely 
\begin{equation}
    \label{els}
    \left\{
    \begin{aligned}
        & (\pt_t + u\cdot\nb)\rho = 0, \\
        & (\pt_t + U\cdot\nb)\rho = 0, \\
        & \rho(\pt_t + U\cdot\nb)u + \nb\Pi = 0, \\
        & \rho(\pt_t + u\cdot\nb)U + \nb\Pi = 0, \\
        & \divv\,u = \divv\,U = 0.
    \end{aligned}
\right.
\end{equation}
This formulation was already derived in a slightly more complex way in \cite{fv} in the particular case $f(\rho)=\rho$. 

\medskip
Let us point out that this system is not equivalent to the original system \eqref{odd} in the following sense. Suppose that we have a solution $(\rho,u,\nb\pi)$ to system \eqref{odd}. Then, one can formally reproduce the computations above to find that the quadruple $(\rho,u,U,\nb\Pi)$, where $\Pi$ and $U$ are defined by \eqref{Pi} and \eqref{U}, solves the Elsässer formulation \eqref{els}. However, the opposite is a priori not true. Indeed, suppose this time that we have a quadruple $(\rho,u,U,\nb\Pi)$ solution to system \eqref{els}. Obviously, one can define 
\[
\pi := \Pi + f(\rho)\om,
\]
but to recover that the triplet $(\rho,u,\nb\pi)$ satisfies \eqref{odd}, we also need the relation 
\[
U = u - \nbp g(\rho)
\]
to be satisfied, which is a priori not the case. As a consequence, one cannot solve the Elsässer formulation \eqref{els} in the variables $(\rho,u,U,\nb\Pi)$ to obtain a solution to system \eqref{odd}, as was done \emph{e.g.} in \cite{cobb2023elsasser} for the ideal MHD equations.

\subsection{Main result} 

The main result of this paper is the following\footnote{We denote by $\ps$ the lower integer part of $s$, namely $\ps:=\max\{n\in\N:n\le s\}$.}.

\begin{theorem}
    \label{main}
    Let $s>2$. Let $(\rho_0,u_0)\in\li(\R^2)\times\hs(\R^2)$ be such that  
    \[
    0<\rho_*:=\inf_{x\in\R^2}\rho_0(x)\le\rho^*:=\|\rho_0\|_\li,\qquad\nb\rho_0\in\hs(\R^2),\qquad \divv\,u_0=0.
    \]
    Let $f$ be a $C^{\ps+3}$-diffeomorphism on $[\rho_*,\rho^*]$. 
    
    Then, there exist a time $T=T\big(s,f',\rho_*,\rho^*,\|\nb\rho_0\|_\hs,\|u_0\|_\hs\big)>0$, depending only on the quantities inside the brackets, and a unique solution $(\rho,u,\nb\pi)$ to system \eqref{odd} on $[0,T]\times\R^2$, with initial data $(\rho,u)_{|t=0}=(\rho_0,u_0)$, such that 
    \begin{itemize}
        \item $\rho\in\li([0,T]\times\R^2)$ with $\rho(t,x)\in[\rho_*,\rho^*]$ for all $(t,x)\in[0,T]\times\R^2$, $\rho-\rz\in C^1([0,T],\hs(\R^2))$, $\nb\rho\in C([0,T],\hs(\R^2))\cap C^1([0,T],\hsm(\R^2))$;
        \item $u\in C([0,T],\hs(\R^2))\cap C^1([0,T],\hsm(\R^2))$;
        \item $\nb\pi\in C([0,T],\hsmd(\R^2))$, $\nb\Pi\in C([0,T],\hs(\R^2))$, where $\Pi:=\pi-f(\rho)\om$, and $\om:=\curl(u)=\pt_1u_2-\pt_2u_1$.
    \end{itemize}
    Moreover, after defining 
    \begin{equation}
        \label{g}
        g(\rho) := \int_{\rho_*}^\rho \frac{2}{r}f'(r)\dd r,\qquad\forall\,\rho\in[\rho_*,\rho^*],
    \end{equation}
    and $U:=u-\nbp g(\rho)$, the quadruple $(\rho,u,U,\nb\Pi)$ solves the Elsässer formulation \eqref{els} of the odd viscosity system.
    Additionally, the couples $(\rho,u)$ and $(\rho,U)$ satisfy the energy equalities
    \[
    \big\|\sqrt{\rho(t)}u(t)\big\|_{\ld(\R^2)} = \big\|\sqrt{\rz}u_0\big\|_{\ld(\R^2)}\qquad\text{and}\qquad\big\|\sqrt{\rho(t)}U(t)\big\|_{\ld(\R^2)} = \big\|\sqrt{\rz}U_0\big\|_{\ld(\R^2)},\qquad\forall\,t\in[0,T],
    \]
    where we have defined $U_0:=u_0-\nbp g(\rho_0)$.
\end{theorem}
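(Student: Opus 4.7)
The strategy is to construct a solution to \eqref{odd} as the limit of a linear iteration scheme built at the level of the Elsässer formulation \eqref{els}. The subtlety, emphasised at the end of Section 1.2, is that \eqref{els} is not equivalent to \eqref{odd} unless one knows \emph{a priori} that $U = u - \nbp g(\rho)$; the scheme must therefore be designed so that this identity holds at every step, and thus in the limit. I would set $(\rho^0,u^0) := (\rho_0,u_0)$, $U^0 := u_0 - \nbp g(\rho_0)$, and define $(\rho^{n+1},u^{n+1},\Pi^{n+1})$ as the solution of the decoupled linear problem
\begin{equation*}
\dt \rho^{n+1} + u^n\cdot\nb \rho^{n+1} = 0, \qquad
\rho^{n+1}\bigl(\dt u^{n+1} + U^n\cdot\nb u^{n+1}\bigr) + \nb \Pi^{n+1} = 0,\qquad \divv u^{n+1} = 0,
\end{equation*}
with data $(\rho_0,u_0)$, and then \emph{force} $U^{n+1} := u^{n+1} - \nbp g(\rho^{n+1})$. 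Note that $U^n$ is divergence-free at each step (from $\divv u^n = 0$ and $\divv\nbp = 0$), so the momentum equation is a linear Euler-type equation with divergence-free transport field; the pressure is recovered as the solution of the elliptic problem $\divv\bigl(\tfrac{1}{\rho^{n+1}}\nb\Pi^{n+1}\bigr) = -\divv\bigl((U^n\cdot\nb) u^{n+1}\bigr)$.

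The heart of the proof is the derivation of uniform-in-$n$ estimates on some time interval $[0,T]$ with $T$ depending only on the initial data. Transport estimates in $\hs$ for $\nb\rho^{n+1}$ require $\nb u^n\in L^\infty$, which follows from $u^n\in\hs$ and $s>2$, as well as Moser-type composition estimates to handle $g(\rho^n)$, justified by the assumption $f\in C^{\ps+3}$. For the velocity, the key ingredient is the Elsässer structure: a direct computation (mirroring Section 1.2) shows that the candidate $U^{n+1}$ satisfies
\begin{equation*}
\rho^{n+1}\bigl(\dt U^{n+1} + u^n\cdot\nb U^{n+1}\bigr) + \nb\Pi^{n+1} = \mathcal{R}^{n+1},
\end{equation*}
where $\mathcal{R}^{n+1}$ collects commutator terms of the form $(u^n-u^{n+1})\cdot\nb\nbp g(\rho^{n+1})$, which are of lower order once both $u^n$ and $u^{n+1}$ are controlled in $\hs$. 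Combining $\hs$-energy estimates on $u^{n+1}$ and $U^{n+1}$ (transported respectively by $U^n$ and $u^n$, both divergence-free) with the $\hs$ bound on $\nb\rho^{n+1}$ and the elliptic $\hs$-estimate for $\nb\Pi^{n+1}$ yields, via Gronwall, a closed inequality of the form $E^{n+1}(t) \le E_0 + C\int_0^t P(E^n(\tau))\dd\tau$, from which uniform boundedness on $[0,T]$ follows by induction.

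Convergence is obtained in a weaker norm by estimating the successive differences $(\delta\rho^n,\delta u^n,\delta U^n,\delta\Pi^n)$ in a weighted $\ld$ (or $\hsm$) norm, using the uniform $\hs$ bound to control the commutator terms. The sequence is Cauchy, so it converges strongly in $H^{s'}$ for any $s'<s$; combined with weak-$\ast$ compactness, the limit $(\rho,u,U,\nb\Pi)$ belongs to the desired space and solves \eqref{els}. Since $U^{n+1} = u^{n+1}-\nbp g(\rho^{n+1})$ by construction, passing to the limit gives $U = u - \nbp g(\rho)$, so $\nb\pi := \nb\Pi + \nb(f(\rho)\om)$ produces a solution of \eqref{odd}. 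Uniqueness follows from a standard $\ld$ energy estimate for the difference of two solutions, using the Lipschitz regularity of the transport fields. The energy equalities are obtained by testing the momentum equations in \eqref{els} against $u$ and $U$ respectively, and using the two continuity equations; the odd viscosity term makes no contribution, as already observed in the introduction.

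The main obstacle is the closure of the uniform $\hs$ estimate in the second step, specifically obtaining $\nb\Pi^n\in\hs$ (a gain of one derivative compared to \cite{fgs}, where the analysis was carried out at the vorticity level). This requires careful elliptic regularity in a variable-coefficient problem where the coefficient $1/\rho^n$ is only $\hs$-regular in its gradient (rather than $\hs$-close to a constant), which is precisely why the assumption $\rho_0-\rho_*\in\ld$ can be dropped. A secondary technical difficulty is handling the composition with $g$ for general $f$ (as opposed to $f(\rho)=\rho$ in \cite{fv}), which is addressed by tame estimates and the $C^{\ps+3}$ regularity assumption on $f$.
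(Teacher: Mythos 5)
Your proposal is the ``naive'' iteration scheme that the paper explicitly rules out in its \emph{Strategy of the proof} subsection, and the step where you claim the scheme closes contains a genuine loss of derivatives. There are two linked problems. First, the density estimate: to propagate $\nb\rho^{n+1}\in\hs$ through the transport equation $(\dt+u^n\cdot\nb)\rho^{n+1}=0$ one needs $\nb u^n\in L^1_T\hs$ (Proposition \ref{trgrad}), i.e.\ $u^n\in\hsp$, not merely $\nb u^n\in\li$ as you assert; with $u^n\in\hs$ only, one gets $\nb\rho^{n+1}\in\hsm$, which is not enough to put $U^n$ (hence $u^{n+1}$, hence $\nb\Pi^{n+1}$) in $\hs$. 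Second, the Elsässer cancellation is exact only for the genuine nonlinear system: in the linearised scheme the computation of Subsection \ref{sec:reformulation} leaves the mismatch $\nbp g(\rho^n)\cdot\nb u^{n+1}-\nbp g(\rho^{n+1})\cdot\nb u^n$ (or, in your formulation, a term involving $\nb\nbp g(\rho^{n+1})$, i.e.\ \emph{two} derivatives of $\rho^{n+1}$). Estimating this remainder in $L^1_T\hs$ requires either $u^{n+1}\in\hsp$ or $\nb\rho^{n+1}\in\hsp$, neither of which is available; it is therefore \emph{not} of lower order, and the inductive inequality $E^{n+1}\le E_0+C\int P(E^n)$ cannot be closed. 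This index-shift obstruction is precisely why the paper does not iterate on \eqref{els} directly.

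The paper's actual route is: regularise with an artificial viscosity $-\eps a\del u$, so that the parabolic smoothing ($u\in\lutt\hspd$) yields $\nb\rho\in\hs$ and allows a convergent iteration for each fixed $\eps$; then apply the Elsässer reformulation to the \emph{exact nonlinear regularised} system (where the cancellation does hold) to transfer the $\hs$ bound from the transported quantity $\ve=\ue-\nbp g(\re)$ back to $\nb\re$ \emph{uniformly} in $\eps$; and finally pass to the limit $\eps\to0$. Your discussion of uniqueness, the energy equalities, and the recovery of \eqref{odd} from \eqref{els} via the forced identity $U=u-\nbp g(\rho)$ is in the right spirit and matches the paper, but without the regularisation (or an equivalent device) the existence part of your argument does not go through.
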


\begin{remark}
    Theorem \ref{main} also applies in the particular case where $f$ is constant. Indeed, suppose that $f(\rho)=c$, for some $c\in\R$ and all $\rho\in[\rho_*,\rho^*]$. Under the assumptions of Theorem \ref{main}, one can reproduce the computations of Subsection \ref{sec:reformulation} to reformulate the odd viscosity system \eqref{odd} as
    \[
    \left\{
    \begin{aligned}
    & (\dt+u\cdot\nb)\rho = 0, \\
    & \rho(\dt+u\cdot\nb)u + \nabla\Pi = 0, \\
    & \divv\,u = 0,
    \end{aligned}
    \right.
    \]
    where $\Pi:=\pi-c\,\om$.
    
    This new system corresponds exactly to the classical non-homogeneous incompressible Euler equations, for which well-posedness results have been established, see \emph{e.g.} \cite{danchin2006,danchin2010,df,fanelli2012} and references therein.
    
    \medskip
    As a consequence, our result covers in particular all the viscosity coefficients of the form $f(\rho)=a\rho^\alpha+b$ for any $(a,b,\alpha)\in\R^3$, generalising the result of \cite{fgs} devoted to the case where $(a,b,\alpha)=(1,0,1)$. We also mention that the viscosity coefficients of the form $f(\rho)=\rho^\alpha$ are an important case of study in the literature for the compressible Navier--Stokes equations, see \emph{e.g.} \cite{vaigant1995,bvy2021}. 
\end{remark}

\subsection{Strategy of the proof}

We now present our strategy for proving Theorem \ref{main}. The idea is to follow the standard scheme of constructing a sequence of approximate solutions to the Elsässer formulation \eqref{els}, deriving uniform bounds for this sequence, and taking the limit. Unfortunately, this method does not apply directly here. 

To be more precise, let us consider the system
\begin{equation}
    \label{odd0}
    \left\{
\begin{aligned}
& (\dt+u\cdot\nb)\rho = 0, \\
& (\dt + U\cdot\nb)u + a\nabla\Pi = 0, \\
& \divv\,u = 0,
\end{aligned}
\right.
\end{equation}
where $a:=1/\rho$, and
\[
U := u - \nbp g(\rho),
\]
where $g$ is defined by \eqref{g}. Let us drop the time variable for a while, and focus only on the space regularity. Let $(\rz,\uz)$ be as in Theorem \ref{main}, and assume that we dispose of a triplet $(\rho^n,u^n,\nb\Pi^n)$, for some $n\ge0$, such that $(\nb\rho^n,u^n,\nb\Pi^n)\in\left(\hs(\R^2)\right)^3$. Now, define $\rho^{n+1}$ as the unique global-in-time solution to the transport equation 
\[
\left\{
\begin{aligned}
& (\dt+u^n\cdot\nb)\rho = 0, \\
& \rho_{|t=0} = \rz.
\end{aligned}
\right.
\]
We would like to construct a solution $(u^{n+1},\nb\Pi^{n+1})\in\left(\hs(\R^2)\right)^2$ to the system
\begin{equation}
    \label{momentn}
    \left\{
\begin{aligned}
&(\dt + U^n\cdot\nb)u + a^{n+1}\nabla\Pi = 0, \\
& \divv\,u = 0, \\
& u_{|t=0} = \uz,
\end{aligned}
\right.
\end{equation}
where $a^n:=1/\rho^n$, for some approximate effective velocity $U^n$ such that $(U^n)_{n\ge0}$ would converge to $U$ defined above in some suitable functional space, where $\rho$ and $u$ would be the limits of the sequences $(\rho^n)_{n\ge0}$ and $(u^n)_{n\ge0}$ being constructed. At this point, there are two possible choices for the definition of $U^n$. 

One can define it as 
\[
U^n := u^n - \nbp g(\rho^{n+1}).
\]
As the transport field $u^n$ only belongs to $\hs$, from Proposition \ref{trgrad}, one only gets that $\nb\rho^{n+1}\in\hsm$. Of course, this regularity is not sufficient to close the iterative argument. One could hope to improve it by means of the Elsässer formulation presented above, but this is not possible. Indeed, to get an equation for $U^n$, one needs to solve system \eqref{momentn} in the first place. But as we have only $U^n\in\hsm$, Theorem \ref{thstokes} only yields a solution $(u^{n+1},\nb\Pi^{n+1})\in\left(\hsm(\R^2)\right)^2$, which is again not enough.

The other possibility is to define 
\[
U^n := u^n - \nbp g(\rho^n).
\]
From the regularity properties on $(\rho^n,u^n)$, we have this time $U^n\in\hs$, and we can apply Theorem \ref{thstokes} to get a solution $(u^{n+1},\nb\Pi^{n+1})\in\left(\hs(\R^2)\right)^2$ to equation \eqref{momentn}. But with such a definition, even if we could find a transport equation for $U^n$ in the spirit of the fourth equation in \eqref{els} (which turns out not to be possible because of the shift of indices $n$ and $n+1$), this would provide no information on the regularity of $\nb\rho^{n+1}$, as it does not appear in this definition of $U^n$. Even with this choice, the iterative argument is thus again impossible to close.

\medskip
To tackle this difficulty, we proceed by viscous regularisation. More precisely, for $0<\eps\le1$, we consider the system 
\begin{equation}
    \label{oddeps0}
    \left\{
\begin{aligned}
& (\dt + u\cdot\nb)\rho = 0, \\
& (\dt + U\cdot\nb)u + a\nabla\Pi - \eps a\del u = 0, \\
& \divv\,u = 0.
\end{aligned}
\right.
\end{equation}
Such a regularisation provides a gain of two derivatives for the velocity $u$, which is more than enough to obtain $\nb\rho\in\hs$. We will then be able to prove the existence of a solution $(\re,\ue,\nb\Pe)$ to this new system, such that $(\nb\re,\ue,\nb\Pe)\in\left(\hs(\R^2)\right)^3$. Now, in order to find a solution $(\rho,u,\nb\Pi)$ to system \eqref{odd0} such that $(\nb\rho,u,\nb\Pi)\in\left(\hs(\R^2)\right)^3$, one has to bound the family $(\nb\re,\ue,\nb\Pe)_{0<\eps\le1}$ uniformly in $\left(\hs(\R^2)\right)^3$. Of course, as we only have $\eps\ue\in\hspd$, the estimate for $\ue$ in $\hsp$, hence the estimate for $\nb\re$ in $\hs$, is not uniform with respect to $0<\eps\le1$, and we are not able to take the limit as $\eps\to0$.

\medskip
This is where the Elsässer formulation \eqref{els} plays a crucial role. By defining 
\[
\ve := \ue - \nbp g(\re),
\]
we find that
\[
(\dt + \ue\cdot\nb)\ve = - \aeps\nb\Pe + \eps\aeps\del\ue .
\]
From the regularity properties of $(\re,\ue,\nb\Pe)$, we then gather from transport theory that $(\ve)_{0<\eps\le1}$ is uniformly bounded in $\hs$. Here comes into play the assumption made on $f$ in Theorem \ref{main}. Indeed, as $f$ is a $C^{\ps+3}$-diffeomorphism, then so is $g$, and Proposition \ref{compo2} allows us to transfer the $\hs$ regularity from $\ve$ to $\nb\re$, to finally gather that $(\nb\re)_{0<\eps\le1}$ is uniformly bounded in $\hs$. We then follow classical arguments to take the limit as $\eps\to0$ and obtain a solution $(\rho,u,\nb\Pi)$ to system \eqref{odd0}, with the claimed regularity properties, and satisfying the Elsässer formulation \eqref{els}. 

\medskip
Finally, we make use one last time of the Elsässer formulation to derive a stability result, that directly implies the uniqueness of solutions to \eqref{odd0}, hence of \eqref{odd}, in our functional framework.

\subsection*{Organisation of the paper} 

In Section \ref{sec:reg}, we construct a uniformly bounded family of solutions to system \eqref{oddeps0} on a fixed time interval. In Section \ref{sec:proofth}, we prove Theorem \ref{main}. We take the limit in the regularisation parameter to obtain solutions to the odd viscosity system \eqref{odd} and its Elsässer formulation \eqref{els}. We then prove the regularity properties and energy equalities stated in Theorem \ref{main}. To finish with, we derive a stability result, which in turn implies the uniqueness of the constructed solutions. In Appendix \ref{sec:stokes}, we provide a fundamental well-posedness result for a Navier--Stokes type system, and key estimates needed for the construction of the regularised solutions and the uniform bounds in Section \ref{sec:reg}. Finally, in Appendix \ref{sec:tools}, we recall some elements of Littlewood-Paley theory needed for our study.

\section{Existence and uniform bounds for a regularised system}
\label{sec:reg}

Let $0<\eps\le1$. We consider the system 
\begin{equation}
\label{oddeps}
\left\{
\begin{aligned}
& (\dt + u\cdot\nb)\rho = 0, \\
& \rho(\dt + U\cdot\nb)u + \nabla\Pi - \eps\del u = 0, \\
& \divv\,u = 0, \\
& (\rho,u)_{|t=0} = (\rez,\uzeps),
\end{aligned}
\right.
\end{equation}
where we have defined the divergence-free vector field 
\[
U := u - \nbp g(\rho),
\]
where $g$ is given by \eqref{g}. 

\medskip

This section is devoted to proving the following result. We refer to Appendix \ref{sec:timespace} for the definition of the time-space Besov spaces $\widetilde{L_T^q}H^\sig$.

\begin{theorem}
    \label{th:oddeps}
    Suppose that the assumptions of Theorem \ref{main} are satisfied. There exist a time $T_0=T_0\big(s,f',\rho_*,\rho^*,\|\nb\rho_0\|_\hs,\|u_0\|_\hs\big)>0$, depending only on the quantities inside the brackets, and a family $(\rho_\eps,u_\eps,\nb\Pi_\eps)_{0<\eps\le1}$ of solutions to \eqref{oddeps} on $[0,T_0]\times\R^2$, satisfying the uniform bounds 
    \begin{equation}
        \label{epsbound}
        \sup_{0<\eps\le1} \Big(\|\nb\re\|_{\widetilde{L_{T_0}^\infty}\hs} + \|\ue\|_{\widetilde{L_{T_0}^\infty}\hs} + \eps\|\ue\|_{\widetilde{L_{T_0}^1}\hspd} + \|\nb\Pe\|_{L_{T_0}^2\hsm}\Big) < \infty.
    \end{equation}
\end{theorem}

We prove Theorem \ref{th:oddeps} in three steps. First, for any $0<\eps\le1$, we construct a uniformly bounded sequence of approximate solutions $\big(\rho_\eps^n,u_\eps^n,\nb\Pi_\eps^n\big)_{n\ge0}$ to system \eqref{oddeps} on some time interval $[0,T_\eps]$. We then take the limit as $n\to+\infty$ to obtain a family of solutions $(\re,\ue,\nb\Pe)_{0<\eps\le1}$ to system \eqref{oddeps}. Finally, we construct a uniform time of existence $0<T_0\le T_\eps$, and prove the claimed uniform bounds for $(\re,\ue,\nb\Pe)_{0<\eps\le1}$ on $[0,T_0]$.

\subsection{Construction of a sequence of approximate solutions}
\label{subsec:approx}

Let $n\ge0$. We consider the system
\begin{equation}
    \label{eq:oddepsn}
    \left\{
    \begin{aligned}
    & (\dt + u^n\cdot\nb)\rho^{n+1} = 0, \\
    & \rho^{n+1}(\dt + U^n\cdot\nb)u^{n+1} + \nabla\Pi^{n+1} - \eps\del u^{n+1} = 0, \\
    & \divv\,u^{n+1} = 0, \\
    & (\rho^{n+1},u^{n+1})_{|t=0} = (\rez,\uzeps),
    \end{aligned}
    \right.
\end{equation}
where
\[
U^n := u^n - \nbp g(\rho^{n+1}).
\]
Let $T>0$ and $0<\rho_*\le\rho^*<\infty$. We introduce the space 
\[
\begin{split}
    E_T := \Big\{(\rho,u,\nb\Pi)\in&\li([0,T]\times\R^2)\times\big(\ctt\hs\cap\lutt H^{s+2}\big)\times \lutt\hs : \\
    &\rho(t,x)\in[\rho_*,\rho^*]\quad\forall\,(t,x)\in[0,T]\times\R^2,\quad\nb\rho\in\ctt\hs,\quad \divv\,u=0\Big\},
\end{split}
\]
endowed with the natural norm 
\[
\|(\rho,u,\nb\Pi)\|_{E_T} := \|\nb\rho\|_{\litt\hs} + \|u\|_{\litt\hs} + \|u\|_{\lutt H^{s+2}} + \|\nb\Pi\|_{\lutt\hs}.
\]
We prove the following statement.
\begin{proposition}
    \label{p:oddepsnbounds}
    Suppose that the assumptions of Theorem \ref{main} are satisfied. For any $0<\eps\le1$, there exist a time $T_\eps=T_\eps\big(s,f',\rho_*,\rho^*,\|\nb\rho_0\|_\hs,\|u_0\|_\hs\big)>0$, depending only on $\eps$ and the quantities inside the brackets, and a sequence $\big(\rho_\eps^n,u_\eps^n,\nb\Pi_\eps^n\big)_{n\ge0}\in E_{T_\eps}$ of solutions to system \eqref{eq:oddepsn} on $[0,T_\eps]\times\R^2$, satisfying the uniform estimate
    \begin{equation}
        \label{nbound}
        \sup_{n\ge0}\;\|(\rho_\eps^n,u_\eps^n,\nb\Pi_\eps^n)\|_{E_{T_\eps}} < \infty,\qquad \forall\,0<\eps\le1.
    \end{equation}
\end{proposition}

As $0<\eps\le1$ is fixed, we simply denote $T\equiv T_\eps$ and $(\rho^n,u^n,\nb\Pi^n)\equiv(\rho_\eps^n,u_\eps^n,\nb\Pi_\eps^n)$, for any $n\ge0$.

\begin{proof}
For any $t\ge0$, let us consider the bounds
    \begin{equation}
        \label{t1}
        C\Big(1 + Ce^{4C^2\|\uzeps\|_\hs}\|\nb\rez\|_\hs\Big)^\lambda t\Big(4C\|\uzeps\|_\hs+Ce^{4C^2\|\uzeps\|_\hs}\|\nb\rez\|_\hs\Big) \le \log2,
    \end{equation}
    \begin{equation}
        \label{t2}
        2C\eps\Big(1 + Ce^{4C^2\|\uzeps\|_\hs}\|\nb\rez\|_\hs\Big)^\lambda t \le \frac{1}{2},
    \end{equation}
    \begin{equation}
        \label{t3}
        \begin{split}
        C\Big(1 + Ce^{4C^2\|\uzeps\|_\hs}\|\nb\rez\|_\hs\Big)^\lambda \Big(\big[t(4C\|\uzeps\|_\hs &+ Ce^{4C^2\|\uzeps\|_\hs}\|\nb\rez\|_\hs) + \eps(t^{1/2}+t)\big]4C\|\uzeps\|_\hs \\
        &+ t^{1/6}4C\|\uzeps\|_\hs\Big) \le 1,
        \end{split}
    \end{equation}
    for some constants $C=C\big(\eps,s,f',\rho_*,\rho^*\big)\ge1$ and $\lambda=\lambda(s)>0$ sufficiently large, depending only on the quantities inside the brackets, to be precised later. Let us now define the time 
    \[
    T := \sup\Big\{t\ge0 : \eqref{t1}-\eqref{t3}\;{\rm are\;satisfied}\Big\}.
    \]
    For any $n\ge0$, let us consider the bounds
    \begin{equation}
        \label{rhon}
        \rho^n(t,x)\in[\rho_*,\rho^*]\quad\forall\,(t,x)\in[0,T]\times\R^2,\qquad \|\nb\rho^n\|_{\litt\hs} \le Ce^{4C^2\|\uzeps\|_\hs}\|\nb\rez\|_\hs,
    \end{equation}
    \begin{equation}
        \label{un}
        \U^n(T) := \|u^n\|_{\litt\hs} + \eps\|u^n\|_{\lutt\hspd} \le 4C\|\uzeps\|_\hs,
    \end{equation}
    \begin{equation}
        \label{nbpin}
        \|\nb\Pi^n\|_{\lutt\hs} \le 1.
    \end{equation}
    Consider the triplet
    \[
    (\rho^0,u^0,\nb\Pi^0) := (\bar\rho_0,0,0),
    \]
    where we have defined 
    \[
    \bar\rho_0(t,x):=\rho_0(x),\qquad\forall\,(t,x)\in[0,T]\times\R^2.
    \]
    Obviously, $(\rho^0,u^0,\nb\Pi^0)$ belongs to $E_T$, and satisfies the bounds \eqref{rhon}-\eqref{nbpin} for $n=0$.
    
    \medskip
    Let $n\ge0$. Assume that there exists a triplet $(\rho^n,u^n,\nb\Pi^n)\in E_T$, satisfying the bounds \eqref{rhon}-\eqref{nbpin}. Denote by $\psi^n$ the flow of $u^n$, defined, for all $(t,x)\in[0,T]\times\R^2$, by
    \[
    \psi^n(t,x) \equiv \psi_t^n(x) := x + \int_0^t u^n\big(\tau,\psi_\tau^n(x)\big)\dd\tau.
    \]
    Since $u^n\in\ct\hs$ and $\hsm\into C_b$, we have $\psi^n\in C^1([0,T]\times\R^2)$. As $\psi_t^n$ is a diffeomorphism over $\R^2$ for any $t\in[0,T]$, we can define
    \[
    \rho^{n+1}(t,x) := \rho_0\big((\psi_t^n)^{-1}(x)\big),\qquad \forall\,(t,x)\in[0,T]\times\R^2.
    \]
    From this expression, it is clear that $\rho^{n+1}$ belongs to $C^1([0,T]\times\R^2)$, and satisfies the bounds
    \begin{equation}
        \label{rhonpbound}
        \rho^{n+1}(t,x)\in[\rho_*,\rho^*],\qquad\forall\,(t,x)\in[0,T]\times\R^2,
    \end{equation}
    and the equation 
    \[
    (\dt + u^n\cdot\nb)\rho^{n+1} = 0\qquad\mathrm{on}\;[0,T]\times\R^2.
    \]
    From Proposition \ref{trgrad}, \eqref{timespace2} and \eqref{un}, we have
    \begin{equation}
        \label{nbrhonp}
        \|\nb\rho^{n+1}\|_{\litt\hs} \le Ce^{C\|u^n\|_{\lutt\hspd}}\|\nb\rez\|_\hs \le Ce^{4C^2\|\uzeps\|_\hs}\|\nb\rez\|_\hs,
    \end{equation}
    for some constant $C=C(\eps,s,\rho_*,\rho^*)\ge1$, depending only on the quantities inside the brackets. This proves that $\rho^{n+1}$ satisfies \eqref{rhon}. Note that in the following computations, as the constant $C$ will keep changing, we will only indicate when a new parameter is involved in its dependency, and we will not rename it. 
    
    \medskip
    Now, after defining 
    \begin{equation}
        \label{anp}
        a^{n+1} := \frac{1}{\rho^{n+1}},\qquad a_* := \frac{1}{\rho^*},\qquad a^* := \frac{1}{\rho_*}, 
    \end{equation}
    we have $a^{n+1}(t,x)\in[a_*,a^*]$, for all $(t,x)\in[0,T]\times\R^2$. Moreover, in view of Proposition \ref{compo2}, we have $\nb a^{n+1}\in\litt\hs$, together with the estimate 
    \begin{equation}
        \label{nbanbrho}
        \|\nb a^{n+1}\|_{\litt\hs} \lesssim \|\nb\rho^{n+1}\|_{\litt\hs}. 
    \end{equation}
    We then define the divergence-free vector field
    \[
    U^n := u^n - \nbp g(\rho^{n+1}).
    \]
    From \eqref{un}, \eqref{nbrhonp} and Proposition \ref{compo2}, we have $U^n\in\litt\hs$. From \eqref{un}, \eqref{timespace2}, \eqref{timespace1} and \eqref{nbrhonp}, it holds that $U^n\in\lut\hs$. In view of all these properties, we are able to apply Theorem \ref{thstokes} to obtain a solution $(u^{n+1},\nb\Pi^{n+1})\in(\ctt\hs\cap\lutt\hspd)\times\lutt\hs$ on $[0,T]\times\R^2$ to the system
    \begin{equation}
    \label{stokes1n}
    \left\{
    \begin{aligned}
    & (\dt + U^n\cdot\nb)u + a^{n+1}\nb\Pi - \eps a^{n+1}\del u = 0, \\
    & \divv\,u = 0, \\
    & u_{|t=0}=\uzeps,
    \end{aligned}
    \right.
    \end{equation}
    satisfying the bounds
    \begin{equation}
        \label{unpt}
        \|u^{n+1}\|_{\litt\hs} + \eps\|u^{n+1}\|_{\lutt\hspd} \le Ce^{C\B_{n,T}^\lambda\|U^n\|_{\lut\hs}} \Big(\|\uzeps\|_\hs + \eps\B_{n,T}^\lambda T\|u^{n+1}\|_{\litt\hs}\Big),
    \end{equation}
    \begin{equation}
        \label{nbpinpt}
        \|\nb\Pi^{n+1}\|_{\lutt\hs} \le C\B_{n,T}^\lambda \Big(\big(T\|U^n\|_{\litt\hs} + \eps(T^{1/2}+T)\big)\|u^{n+1}\|_{\litt\hs} + \eps T^{1/6}\|u^{n+1}\|_{\lutt\hspd}\Big).
    \end{equation}
    where we have defined
    \[
    \B_{n,T} := 1 + \|\nb\rho^{n+1}\|_{\litt\hs},
    \]
    and made use of inequality \eqref{nbanbrho}. In view of \eqref{nbrhonp}, we have 
    \begin{equation}
        \label{bnt}
        \B_{n,T} \le 1 + Ce^{4C^2\|\uzeps\|_\hs}\|\nb\rez\|_\hs.
    \end{equation}
    Using now \eqref{timespace1}, Proposition \ref{compo2}, \eqref{un} and \eqref{nbrhonp}, we have 
    \begin{equation}
        \label{Un}
        \|U^n\|_{\lut\hs} \le T\|U^n\|_{\litt\hs} \lesssim T\big(4C\|\uzeps\|_\hs + Ce^{4C^2\|\uzeps\|_\hs}\|\nb\rez\|_\hs\big),
    \end{equation}
    for a new constant $C=C\big(\eps,s,f',\rho_*,\rho^*\big)\ge1$. Plugging these estimates in \eqref{unpt}, we gather that
    \begin{align}
        \label{eq:energ1}
        \U^{n+1}(T) \le \;&Ce^{C(1 + Ce^{4C^2\|\uzeps\|_\hs}\|\nb\rez\|_\hs)^\lambda T(4C\|\uzeps\|_\hs+Ce^{4C^2\|\uzeps\|_\hs}\|\nb\rez\|_\hs)} \\
        \nonumber&\times\Big(\|\uzeps\|_\hs + \eps\big(1 + Ce^{4C^2\|\uzeps\|_\hs}\|\nb\rez\|_\hs\big)^\lambda T\U^{n+1}(T)\Big).
    \end{align}
    From the bounds \eqref{t1} and \eqref{t2}, we finally have 
    \begin{equation}
        \label{unp}
        \U^{n+1}(T) \le 4C\|\uzeps\|_\hs.
    \end{equation}
    This proves that $\U^{n+1}$ satisfies \eqref{un}. Now, in view of \eqref{nbpinpt}, \eqref{Un} and \eqref{unp}, we deduce from \eqref{t3} that 
    \[
    \|\nb\Pi^{n+1}\|_{\lutt\hs} \le 1,
    \]
    and $\nb\Pi^{n+1}$ satisfies \eqref{nbpin}. This completes the iterative argument. 
\end{proof}

\subsection{Convergence of the sequence towards a regularised solution}
\label{convapprox}

With Proposition \ref{p:oddepsnbounds} at hand, we can now prove the existence of solutions to the regularised system \eqref{oddeps}.

\begin{proposition}
    \label{p:oddepsnconv}
    Suppose that the assumptions of Theorem \ref{main} are satisfied. For any $0<\eps\le1$, there exists a solution $(\re,\ue,\nb\Pe)\in E_{T_\eps}$ to system \eqref{oddeps} on $[0,T_\eps]\times\R^2$, where $T_\eps$ is given by Proposition \ref{p:oddepsnbounds}.
\end{proposition}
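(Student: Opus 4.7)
The plan is to pass to the limit $n\to\infty$ in the sequence $(\rho^n_\eps, u^n_\eps, \nb\Pi^n_\eps)_{n\ge 0}$ produced by Proposition \ref{p:oddepsnbounds}. Since this sequence is uniformly bounded in the high-regularity space $E_{T_\eps}$, it suffices to establish strong convergence in a weaker norm; the standard approach is to show that $(\rho^n_\eps, u^n_\eps)_{n\ge 0}$ is a Cauchy sequence in $L^\infty([0,T_\eps];L^2(\R^2))$ (possibly after reducing $T_\eps$), then interpolate with the uniform $E_{T_\eps}$-bound to obtain strong convergence in a functional space sufficient for passing to the limit in the nonlinear terms.

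To this end, I would work with the differences $\delta\rho^n := \rho^{n+1}_\eps - \rho^n_\eps$ and $\delta u^n := u^{n+1}_\eps - u^n_\eps$, which satisfy the system
\begin{align*}
(\dt + u^n_\eps\cdot\nb)\delta\rho^n &= -\delta u^{n-1}\cdot\nb\rho^n_\eps, \\
\rho^{n+1}_\eps(\dt + U^n_\eps\cdot\nb)\delta u^n + \nb\delta\Pi^n - \eps\del\delta u^n &= F^n,
\end{align*}
where $\delta\Pi^n := \Pi^{n+1}_\eps - \Pi^n_\eps$ and $F^n$ regroups the forcing terms involving $\delta\rho^n$, $\delta u^{n-1}$ and the difference $\nbp\big(g(\rho^{n+1}_\eps) - g(\rho^n_\eps)\big)$ appearing in $\delta U^{n-1} := U^n_\eps - U^{n-1}_\eps$. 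A direct $L^2$ energy estimate on the momentum equation (testing against $\delta u^n$, and exploiting $\divv\delta u^n = 0$ to eliminate $\nb\delta\Pi^n$), combined with a basic $L^2$ estimate on the transport equation for $\delta\rho^n$, leads to an inequality of the form
\[
\frac{d}{dt}\Big(\|\delta\rho^n\|_{\ld}^2 + \|\sqrt{\rho^{n+1}_\eps}\,\delta u^n\|_{\ld}^2\Big) + 2\eps\|\nb\delta u^n\|_{\ld}^2 \le C_\eps\Big(\|\delta\rho^n\|_{\ld}^2 + \|\delta u^n\|_{\ld}^2 + \|\delta u^{n-1}\|_{\ld}^2 + \|\delta\rho^n\|_{\ld}\|\delta u^{n-1}\|_{\ld}\Big),
\]
where $C_\eps$ depends only on $\eps$ and on the uniform $E_{T_\eps}$-bound of Proposition \ref{p:oddepsnbounds}. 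Gronwall's lemma then yields a contraction after possibly shrinking $T_\eps$, so that $(\rho^n_\eps, u^n_\eps)_{n\ge 0}$ is Cauchy in $L^\infty([0,T_\eps]; L^2(\R^2))$, with $u^n_\eps$ additionally Cauchy in $L^2([0,T_\eps]; H^1(\R^2))$ thanks to the regularisation.

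Strong convergence in $L^\infty([0,T_\eps]; L^2(\R^2))$, together with the uniform high-regularity bounds, yields by interpolation strong convergence of $(\rho^n_\eps, u^n_\eps)_{n\ge 0}$ in $C([0,T_\eps]; H^{s'}(\R^2))$ for any $s' < s$, which is amply sufficient to pass to the limit in all nonlinear terms of system \eqref{eq:oddepsn}, including the transport field $U^n_\eps$ containing $\nbp g(\rho^{n+1}_\eps)$ thanks to Proposition \ref{compo2}. Weak-$*$ compactness then ensures that the limits $(\re, \ue)$ inherit the full $E_{T_\eps}$-bounds. The gradient pressure $\nb\Pe$ is recovered as the unique limit given by the elliptic problem obtained from the momentum equation, and its regularity follows from the Stokes estimates of Theorem \ref{thstokes} applied to $(\re, \ue)$.

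The main obstacle will be the handling of the pressure difference $\nb\delta\Pi^n$ in the energy estimate — taken care of by the divergence-free character of $\delta u^n$ after integration by parts — together with the control of the source term $\nbp\big(g(\rho^{n+1}_\eps) - g(\rho^n_\eps)\big)$ in $F^n$. The latter requires a Lipschitz-type estimate $\big|g(\rho^{n+1}_\eps) - g(\rho^n_\eps)\big| \lesssim |\delta\rho^n|$, guaranteed by the assumption that $g$ is a $C^{\ps+3}$-diffeomorphism on $[\rho_*,\rho^*]$, together with the uniform bound on $\nb\rho^n_\eps$ in $\hs$ (hence in $L^\infty$) coming from Proposition \ref{p:oddepsnbounds}.
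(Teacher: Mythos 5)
Your route is genuinely different from the paper's: the paper does not attempt a Cauchy/contraction argument at this stage at all. It passes to the limit by pure compactness — uniform bounds from Proposition \ref{p:oddepsnbounds}, uniform bounds on $\dt\rho^n$ and $\dt u^n$ read off from the equations, the Aubin--Lions lemma to get strong convergence of a subsequence in $\ct H^\sigma(B_R)$ for $\sigma<s$, a distributional passage to the limit, and the Fatou property (Theorem \ref{th:fatou}) to place the limit in $E_{T_\eps}$. Since only existence (not uniqueness of the limit of the scheme) is needed here, extraction of a subsequence suffices, and the paper thereby sidesteps entirely the difference estimates you propose.

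There is, however, a genuine gap in your argument as written. The dangerous forcing term in the equation for $\delta u^n$ is $-\rho^{n+1}(\delta U^{n-1}\cdot\nb)u^{n+1}$ with $\delta U^{n-1}=\delta u^{n-1}-\nbp\big(g(\rho^{n+1})-g(\rho^n)\big)$, and
\[
\nbp\big(g(\rho^{n+1})-g(\rho^n)\big)=g'(\rho^{n+1})\,\nbp\delta\rho^n+\big(g'(\rho^{n+1})-g'(\rho^n)\big)\nbp\rho^n .
\]
The second piece is indeed handled by the Lipschitz bound you invoke, but the first piece carries a \emph{full derivative} of $\delta\rho^n$ and is not controlled by $\|\delta\rho^n\|_\ld$; the pointwise estimate $|g(\rho^{n+1})-g(\rho^n)|\lesssim|\delta\rho^n|$ says nothing about $\nbp$ of that difference. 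This is precisely the derivative loss that, as the paper explains in its strategy section, cannot be repaired by an Elsässer-type equation for $U^n$ because of the index shift in the scheme. Consequently the differential inequality you state does not follow from the ingredients you list. It can be rescued for \emph{fixed} $\eps$, but only by exploiting the viscous term: writing $\nbp a\cdot\nb b=\pt_1(a\,\pt_2 b)-\pt_2(a\,\pt_1 b)$, one integrates by parts in $\int \rho^{n+1}g'(\rho^{n+1})(\nbp\delta\rho^n\cdot\nb)u^{n+1}\cdot\delta u^n\dd x$ to move the derivative off $\delta\rho^n$, producing a term $\lesssim\|\delta\rho^n\|_\ld\,\|\nb^2 u^{n+1}\|_\li\,\|\delta u^n\|_\ld$ plus a term $\lesssim\|\delta\rho^n\|_\ld\,\|\nb u^{n+1}\|_\li\,\|\nb\delta u^n\|_\ld$, the latter absorbed into $\eps\|\nb\delta u^n\|_\ld^2$ by Young's inequality at the price of a constant of order $\eps^{-1}$ (alternatively, one estimates $\delta\rho^n$ in $H^1$ and closes via the $L^2_tH^1$ control of $\delta u^{n-1}$ furnished by the dissipation). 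Either repair must be made explicit; note also that the resulting coefficient $C_\eps$ is only $L^1$ in time (it involves $\|\nb^2u^{n+1}\|_\li$ and $\|\dt u^n\|_\li$, controlled through $\lutt\hspd$), which is still compatible with Gr\"onwall, and that the final statement must hold on all of $[0,T_\eps]$, so the ``shrink $T_\eps$'' step needs to be upgraded to the usual iteration over subintervals of a fixed small length depending only on the uniform bounds.
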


As before, we simply denote $T\equiv T_\eps$ and $(\rho^n,u^n,\nb\Pi^n)\equiv(\rho_\eps^n,u_\eps^n,\nb\Pi_\eps^n)$, for any $n\ge0$.

\begin{proof} We wish to take the limit as $n\to+\infty$ in system \eqref{eq:oddepsn} in some suitable functional space. 

\medskip

We prove stability in $C_TL^2$. For $n,m\ge0$, let us denote 
    \[
    (\delta h)^{n,m}:=h^{n+m}-h^n,\qquad\text{for}\;h\in\{\rho,u,U,\nb\Pi\},
    \]
    and define the energy function 
    \[
    \e^{n,m}(t) := \|(\delta u)^{n,m}(t)\|_\ld^2 + \|(\delta\rho)^{n,m}(t)\|_\ld^2.
    \]
    
    On the one hand, for all $k\ge1$, 
    \begin{equation}
        \label{eqrhok}
        \pt_t\rho^k + u^{k-1}\cdot\nb\rho^k = 0.
    \end{equation}
    Using this with $k=n+m$, $k=n$ and taking the difference, we infer
    \[
    \pt_t(\delta\rho)^{n,m} + u^{n+m-1}\cdot\nb(\delta\rho)^{n,m} = -(\delta u)^{n-1,m}\cdot\nb\rho^n.
    \]
    Multiplying this equation by $(\delta\rho)^{n,m}$, integrating in time and using \eqref{nbound}, we gather 
    \begin{equation}
        \label{rhobound}
        \frac{1}{2}\ddt \|(\delta\rho)^{n,m}\|_\ld^2 \lesssim \|(\delta u)^{n-1,m}\|_\ld \|\nb\rho^n\|_\li \|(\delta\rho)^{n,m}\|_\ld \lesssim \e^{n-1,m} + \|\nb\rho^n\|_\li^2\e^{n,m},
    \end{equation}
    where we have also used \eqref{nbound}.  

    On the other hand, for all $k\ge1$, 
    \begin{equation}
        \label{equk}
        \pt_tu^k + (U^{k-1}\cdot\nb)u^k + \frac{1}{\rho^k}\nb\Pi^k - \eps\frac{1}{\rho^k}\del u^k = 0.
    \end{equation}
    Decomposing 
    \[
    (U^{k-1}\cdot\nb)u^k = (u^{k-1}\cdot\nb)u^k - (\nbp g(\rho^k)\cdot\nb)u^k
    \]
    and using that 
    \[
    (\nbp g(\rho^k)\cdot\nb)u^k = - (\nb g(\rho^k)\cdot\nbp)u^k = - \divv(g(\rho^k)\nbp u^k),
    \]
    we have 
    \[
    \pt_tu^k + (u^{k-1}\cdot\nb)u^k + \frac{1}{\rho^k}\nb\Pi^k - \eps\frac{1}{\rho^k}\Delta u^k + \divv(g(\rho^k)\nbp u^k) = 0,
    \]
    for all $k\ge1$. Using this with $k=n+m$, $k=n$, taking the difference and multiplying by $\rho^{n+m}$, we infer 
    \begin{equation}
        \label{deltaunm}
        \begin{split}
            &\rho^{n+m}\pt_t(\delta u)^{n,m} + \rho^{n+m}u^{n+m-1}\cdot\nb(\delta u)^{n,m} + (\delta\nb\Pi)^{n,m} - \eps\del(\delta u)^{n,m} + \rho^{n+m}\divv(g(\rho^{n+m})\nbp(\delta u)^{n,m}) \\
            &= -\rho^{n+m}((\delta u)^{n-1,m}\cdot\nb)u^n + \frac{(\delta\rho)^{n,m}}{\rho^n}\nb\Pi^n - \eps\frac{(\delta\rho)^{n,m}}{\rho^n}\del u^n - \rho^{n+m}\divv(\delta g(\rho)^{n,m}\nbp u^n).   
        \end{split}
    \end{equation}
    Using that 
    \[
    \begin{split}
        \int\rho^{n+m}\pt_t(\delta u)^{n,m}\cdot(\delta u)^{n,m} &= \frac{1}{2}\frac{\dd}{\dd t}\|\sqrt{\rho^{n+m}}(\delta u)^{n,m}\|_\ld^2 - \frac{1}{2}\int\pt_t\rho^{n+m}|(\delta u)^{n,m}|^2, \\
        \int\rho^{n+m}(u^{n+m-1}\cdot\nb)(\delta u)^{n,m}\cdot(\delta u)^{n,m} &= -\frac{1}{2}\int(u^{n+m-1}\cdot\nb\rho^{n+m})|(\delta u)^{n,m}|^2,
    \end{split}
    \]
    we infer 
    \[
    \begin{split}
        &\int\left(\rho^{n+m}\pt_t(\delta u)^{n,m} + \rho^{n+m}(u^{n+m-1}\cdot\nb)(\delta u)^{n,m}\right)\cdot(\delta u)^{n,m} \\ 
        &= \frac{1}{2}\frac{\dd}{\dd t}\|\sqrt{\rho^{n+m}}(\delta u)^{n,m}\|_\ld^2 - \frac{1}{2}\int\left(\pt_t\rho^{n+m}+u^{n+m-1}\cdot\nb\rho^{n+m}\right)|(\delta u)^{n,m}|^2 \\
        &= \frac{1}{2}\frac{\dd}{\dd t}\|\sqrt{\rho^{n+m}}(\delta u)^{n,m}\|_\ld^2,
    \end{split}
    \]
    where we have also used equation \eqref{eqrhok} with $k=n+m$ and the divergence-free condition on $(\delta u)^{n,m}$. The pressure and odd viscosity terms on the left-hand side of equation \eqref{deltaunm} being orthogonal to $(\delta u)^{n,m}$, we obtain
    \[
    \begin{split}
        \frac{1}{2}\ddt \|\sqrt{\rho^{n+m}}(\delta u)^{n,m}\|_\ld^2 + \eps\|\nb(\delta u)^{n,m}\|_\ld^2 \lesssim &\|(\delta u)^{n-1,m}\|_\ld\|\nb u^n\|_\li\|(\delta u)^{n,m}\|_\ld \\
        &+ \|(\delta\rho)^{n,m}\|_\ld\|\nb\Pi^n\|_\li\|(\delta u)^{n,m}\|_\ld \\ 
        &+ \|(\delta\rho)^{n,m}\|_\ld\|\del u^n\|_\li\|(\delta u)^{n,m}\|_\ld \\
        &+ \|\delta g(\rho)^{n,m}\|_\ld\|\nb u^n\|_\li\|\nb(\delta u)^{n,m}\|_\ld,
    \end{split}
    \]
    where we have performed an integration by parts for the last term. Now, we focus on the last term on the right-hand side. As $g'\in\li$, we get from the mean value theorem that
    \[
    \|\delta g(\rho)^{n,m}\|_\ld \lesssim \|(\delta\rho)^{n,m}\|_\ld.
    \]
    Using now the Young inequality, we deduce that 
    \begin{equation}
        \label{ubound}
        \frac{1}{2}\ddt\|\sqrt{\rho^{n+m}}(\delta u)^{n,m}\|_\ld^2 \lesssim \e^{n-1,m} + \Big(\|\nb u^n\|_\li^2 + \|\nb\Pi^n\|_\li + \|\del u^n\|_\li\Big)\e^{n,m}.
    \end{equation}
    Summing up estimates \eqref{rhobound}, \eqref{ubound} and using the Young inequality, one gathers that 
    \[
    \ddt\left(\|(\delta\rho)^{n,m}\|_\ld^2 + \|\sqrt{\rho^{n+m}}(\delta u)^{n,m}\|_\ld^2\right) \lesssim \e^{n-1,m} + \Big(\|\nb\rho^n\|_\li^2 + \|\nb u^n\|_\li^2 + \|\del u^n\|_\li + \|\nb\Pi^n\|_\li\Big)\e^{n,m}.
    \]
    Integrating in time, we obtain 
    \[
    \e^{n,m}(t) \le C\int_0^t \e^{n-1,m}\dd\tau + C\int_0^t \Big(\|\nb\rho^n\|_\li^2 + \|\nb u^n\|_\li^2 + \|\del u^n\|_\li + \|\nb\Pi^n\|_\li\Big)\e^{n,m}\dd\tau,
    \]
    for some constant $C>0$. Using the Grönwall inequality, we infer
    \[
    \e^{n,m}(t) \le \Big(C\int_0^t \e^{n-1,m}\dd\tau\Big)\exp\left(C\int_0^t \Big(\|\nb\rho^n\|_\li^2 + \|\nb u^n\|_\li^2 + \|\del u^n\|_\li + \|\nb\Pi^n\|_\li\Big)\dd\tau\right).
    \]
    Now, as $s>2$, we have $\hsm\into\li$, and 
    \[
    \|\nb\rho^n\|_{L_T^2\li} \lesssim \|\nb\rho^n\|_{L_T^2H^s} \lesssim \sqrt{T}\|\nb\rho^n\|_{\lit H^s} \lesssim \sqrt{T}\|\nb\rho^n\|_{\litt H^s},
    \]
    \[
    \|\nb\Pi^n\|_{\lut\li} \lesssim \|\nb\Pi^n\|_{\lut H^{s-1}} \lesssim \|\nb\Pi^n\|_{\lutt H^s},
    \]
    \[
    \|\del u^n\|_{\lut\li} \lesssim \|\del u^n\|_{\lut H^{s-1}} \lesssim \|u^n\|_{\lut H^{s+1}} \lesssim \|u^n\|_{\lutt H^{s+2}},
    \] 
    \[
    \|\nb u^n\|_{L_T^2\li} \lesssim \|\nb u^n\|_{L_T^2H^s} = \|\nb u^n\|_{\widetilde{L_T^2}H^s} \le \|\nb u^n\|_{\litt H^{s-1}}^{1/2}\|\nb u^n\|_{\lutt H^{s+1}}^{1/2} \lesssim \|u^n\|_{\litt H^s} + \|u^n\|_{\lutt H^{s+2}},
    \]
    where we have used \eqref{timespace1}-\eqref{timespace2}, and the interpolation inequality \eqref{interpolation} for the last estimate. From \eqref{nbound}, we then obtain that
    \[
    \e^{n,m}(t) \le C_T\int_0^t\e^{n-1,m}(\tau)\dd\tau,
    \]
    Defining 
    \[
    \f^n(t):=\sup_{m\ge0}\sup_{0\le\tau\le t}\e^{n,m}(\tau),
    \]
    one has, for all $0\le t\le T$,
    \[
    \f^n(t)\le C_T\int_0^t \f^{n-1}(\tau)\dd\tau,
    \]
    thus
    \[
    \f^n(t)\le \frac{(C_TT)^n}{n!}\f^0(t).
    \]
    This shows that $(\rho^n-\rho_0)_{n\ge0}$ and $(u^n)_{n\ge0}$ are Cauchy sequences in $C_T\ld$. Furthermore, as they are bounded in $C_TH^s$, one gathers by interpolation that they are Cauchy sequences in $C_TH^\sig$ for all $\sig<s$: there exist $r_\eps\in C_TH^\sig$ and $\ue\in C_TH^\sig$ such that we have the strong convergences 
    \begin{equation}
        \label{conv:run}
        \rho^n-\rho_0 \to r_\eps\qquad\text{and}\qquad u^n\to\ue\qquad\text{in}\;C_TH^\sig,\qquad\text{as}\;n\to+\infty.
    \end{equation}
    Setting now $\re:=r_\eps+\rho_0$, it follows that 
    \begin{equation}
        \label{conv:rhon}
        \rho^n-\rho_0 \to \re-\rho_0\qquad\text{in}\;C_TH^\sig,\qquad\text{as}\;n\to+\infty,
    \end{equation}
    and 
    \[
    \nb\rho^n \to \nb\re\qquad\text{in}\;C_TH^{\sig-1},\qquad\text{as}\;n\to+\infty.
    \]
    Then, from the mean value theorem, one has 
    \[
    \begin{split}
        \|\nb g(\rho^n)-\nb g(\re)\|_\ld &\le \|g'(\rho^n)\|_\li \|\nb\rho^n-\nb\re\|_\ld + \|g'(\rho^n)-g'(\re)\|_\ld \|\nb\re\|_\li \\
        &\le \|g'(\rho^n)\|_\li \|\nb\rho^n-\nb\re\|_\ld + \|g''\|_{\li([\rho_*,\rho^*])}\|\rho^n-\re\|_\ld \|\nb\re\|_\li,
    \end{split}
    \]
    which proves the strong convergence $\nb g(\rho^n)\to\nb g(\re)$ in $C_T\ld$, as $n\to+\infty$. On the other hand, we deduce from \eqref{nbound} and Proposition \ref{compo2} that $(\nb g(\rho^n))_{n\ge0}$ is uniformly bounded in $C_T\hs$. We then gather by interpolation that for any $\sig<s$,
    \[
    \nb g(\rho^n) \to \nb g(\re)\qquad\text{in}\;C_TH^\sig,\qquad\text{as}\;n\to+\infty.
    \]
    It follows that 
    \begin{equation}
        \label{conv:Un}
        U^n \to \ve\qquad\text{in}\;C_TH^\sig,\qquad\text{as}\;n\to+\infty,
    \end{equation}
    where we have defined 
    \begin{equation}
        \label{eq:Ue}
        \ve:=\ue-\nbp g(\re).
    \end{equation}
    Coming back to \eqref{conv:run}, we have, for all $\sig<s$,
    \[
    \divv\,u^n\to\divv\,\ue\qquad\mathrm{in}\;\ct H^{\sig-1},\qquad\mathrm{as}\;n\to+\infty.
    \]
    From this and the third equation in \eqref{eq:oddepsn}, we gather that $\divv\,\ue=0$ almost everywhere on $[0,T]\times B_R$. As $\ue$ is continuous on $[0,T]\times\R^2$, we deduce that $\divv\,\ue=0$ everywhere on $[0,T]\times\R^2$.
    
    Let us now study the convergence of $(\nb\Pi^n)_{n\ge0}$. Coming back to equation \eqref{equk} with $k=n+m$, $k=n$ and taking the difference, we have 
    \[
    \begin{split}
        &\pt_t(\delta u)^{n,m} + (U^{n+m-1}\cdot\nb)(\delta u)^{n,m} + \frac{1}{\rho^{n+m}}(\delta\nb\Pi)^{n,m} - \eps\frac{1}{\rho^{n+m}}\del(\delta u)^{n,m}  \\
        &= -((\delta U)^{n-1,m}\cdot\nb)u^n + \frac{(\delta\rho)^{n,m}}{\rho^{n+m}\rho^n}\nb\Pi^n - \eps\frac{(\delta\rho)^{n,m}}{\rho^{n+m}\rho^n}\del u^n.   
    \end{split}
    \]
    Then, one has the elliptic equation 
    \[
    \begin{split}
        -\divv\left(\frac{1}{\rho^{n+m}}(\delta\nb\Pi)^{n,m}\right) = \divv\Big(&(U^{n+m-1}\cdot\nb)(\delta u)^{n,m} - \eps\frac{1}{\rho^{n+m}}\del(\delta u)^{n,m} + ((\delta U)^{n-1,m}\cdot\nb)u^n \\
        &- \frac{(\delta\rho)^{n,m}}{\rho^{n+m}\rho^n}\nb\Pi^n + \eps\frac{(\delta\rho)^{n,m}}{\rho^{n+m}\rho^n}\del u^n\Big).
    \end{split}
    \]
    From Lemma \ref{ellipticlow}, we obtain 
    \[
    \begin{split}
        \|(\delta\nb\Pi)^{n,m}\|_\ld \lesssim &\|U^{n+m-1}\|_\li \|(\delta u)^{n,m}\|_{H^1} + \|(\delta u)^{n,m}\|_{H^2} + \|(\delta U)^{n-1,m}\|_\ld \|\nb u^n\|_\li \\
        &+ \|(\delta\rho)^{n,m}\|_\ld \|\nb\Pi^n\|_\li + \|(\delta\rho)^{n,m}\|_\ld \|\del u^n\|_\li.
    \end{split}
    \]
    Integrating in time and using that $\hsm\into\li$ as well as \eqref{timespace2},
    \[
    \begin{split}
        \|(\delta\nb\Pi)^{n,m}\|_{\lut\ld} \lesssim &T\|U^{n+m-1}\|_{\litt\hs} \|(\delta u)^{n,m}\|_{\lit H^1} + T\|(\delta u)^{n,m}\|_{\lit H^2} + \|(\delta U)^{n-1,m}\|_{\lit\ld} \|u^n\|_{\lut\hspd} \\
        &+ \|(\delta\rho)^{n,m}\|_{\lit\ld} \|\nb\Pi^n\|_{\lutt\hs} + \|(\delta\rho)^{n,m}\|_{\lit\ld} \|u^n\|_{\lutt\hspd}.
    \end{split}
    \]
    From the previous boundedness and convergence properties, we deduce that $(\nb\Pi^n)_{n\ge0}$ is a Cauchy sequence in $\lut\ld$. Since it is also bounded in $\lutt\hs$, we deduce by \eqref{timespace2} that $(\nb\Pi^n)_{n\ge0}$ is a Cauchy sequence in $\lut\hsig$, for all $\sig<s$: there exists $\nb\Pe\in\lut\hsig$ such that 
    \begin{equation}
        \label{conv:nbPin}
        \nb\Pi^n \to \nb\Pe \qquad \text{in} \; \lut\hsig, \qquad \text{as} \; n\to+\infty.
    \end{equation}
    
    In view of the convergence properties \eqref{conv:run}, \eqref{conv:rhon}, \eqref{conv:Un} and \eqref{conv:nbPin}, we can now take the limit as $n\to+\infty$ in system \eqref{eq:oddepsn} to deduce that the triplet $(\rho_\eps,u_\eps,\nb\Pi_\eps)$ solves system \eqref{oddeps} on $[0,T]\times\R^2$:
    \begin{equation}
\label{oddepsbis}
\left\{
\begin{aligned}
& (\dt + \ue\cdot\nb)\re = 0, \\
& \re(\dt + \ve\cdot\nb)\ue + \nabla\Pe - \eps\del\ue = 0, \\
& \divv\,\ue = 0, \\
& (\re,\ue)_{|t=0} = (\rez,\uzeps).
\end{aligned}
\right.
\end{equation}
From the first equation, we also deduce that $\rho_*\le\re\le\rho^*$ on $[0,T]\times\R^2$. Moreover, this solution belongs to $E_T$, as can be seen using \eqref{nbound} and the Fatou property of time-space Besov spaces (see Theorem \ref{th:fatou}).
\end{proof}

\subsection{Uniform bounds for the regularised solutions}
\label{subsec:unifbounds}

We conclude this section by proving the last part of Theorem \ref{th:oddeps}.

\medskip
Let $0<\eps\le1$. Consider the time $T_\eps$ and the couple $(\re,\ue)$ given by Proposition \ref{p:oddepsnconv}. For any $t\in(0,T_\eps]$, we denote 
\[
\eeps(t) := \|\nb\re\|_{\widetilde{L_t^\infty}\hs} + \|\ue\|_{\widetilde{L_t^\infty}\hs} + \eps\|\ue\|_{\widetilde{L_t^1}\hspd}.
\]
We prove the following statement.

\begin{proposition}
\label{epsbounds}
    Suppose that the assumptions of Theorem \ref{main} are satisfied. Then, there exists a time $T_0=T_0\big(s,f',\rho_*,\rho^*,\|\nb\rho_0\|_\hs,\|u_0\|_\hs\big)>0$, depending only on the quantities inside the brackets, such that 
    \[
    \inf_{0<\eps\le1} T_\eps\ge T_0>0\qquad\text{and}\qquad\sup_{0<\eps\le1} \Big(\eeps(T_0) + \|\nb\Pe\|_{L_{T_0}^2\hsm}\Big) < \infty.
    \]
\end{proposition}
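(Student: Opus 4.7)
The plan is to use the Elsässer reformulation derived in Subsection \ref{sec:reformulation} to avoid the loss of one derivative in $\re$, and to establish an integral inequality on $\eeps(t)$ whose constants do not depend on $\eps \in (0,1]$. The parabolic source terms $\eps\aeps\del\ue$ appearing in the Elsässer equations will be controlled by the quantity $\eps\|\ue\|_{\lutt\hspd}$ already built into $\eeps$, which is precisely what allows the bound to pass to the limit $\eps \to 0$. Combining this uniform bound with a standard continuation/bootstrap argument, both $\inf_\eps T_\eps \ge T_0$ and $\sup_\eps \eeps(T_0) < \infty$ will follow.

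The first step is an elliptic estimate for $\nb\Pe$. Dividing the momentum equation in \eqref{oddepsbis} by $\re$, taking its divergence, and using $\divv\ue = \divv\ve = 0$ gives
\[
-\divv(\aeps\nb\Pe) = \divv\bigl((\ve\cdot\nb)\ue\bigr) - \eps\,\nb\aeps\cdot\del\ue.
\]
Applying Proposition \ref{elliptichigh} in $\hs$, together with the pointwise bound $a_* \le \aeps \le a^*$ and the tame/composition estimates (Corollary \ref{tameestimates}, Proposition \ref{compo2}), I obtain
\[
\|\nb\Pe(t)\|_\hs \le \Phi\bigl(\|\nb\re(t)\|_\hs\bigr)\Bigl(\|\ue(t)\|_\hs\|\ve(t)\|_\hs + \eps\|\del\ue(t)\|_\hs\Bigr)
\]
for some continuous increasing $\Phi$ depending only on $s$, $\rho_*$, $\rho^*$. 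The crucial feature is that $\nb\re$ enters only through the elliptic coefficient and no new derivative of $\re$ is produced.

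Next, I apply the tildified transport estimate in $\hs$ to both Elsässer equations
\[
(\dt + \ve\cdot\nb)\ue = -\aeps\nb\Pe + \eps\aeps\del\ue, \qquad (\dt + \ue\cdot\nb)\ve = -\aeps\nb\Pe + \eps\aeps\del\ue,
\]
using that the transport fields $\ue$ and $\ve$ are divergence-free and that $\hs \into W^{1,\infty}$ for $s>2$. This produces
\[
\|\ue\|_{\litt\hs} + \|\ve\|_{\litt\hs} \le e^{C\int_0^t (\|\ue\|_\hs + \|\ve\|_\hs)\,d\tau}\Bigl(\|\uzeps\|_\hs + \|\vzeps\|_\hs + \|\aeps\nb\Pe\|_{\lutt\hs} + \eps\|\aeps\del\ue\|_{\lutt\hs}\Bigr).
\]
To control $\|\nb\re\|_{\litt\hs}$, I then use the defining relation $\nbp g(\re) = \ue - \ve$: since $g$ is a $C^{\ps+3}$-diffeomorphism on $[\rho_*,\rho^*]$, so is its inverse $g^{-1}$, and Proposition \ref{compo2} applied to $g^{-1}$ yields
\[
\|\nb\re\|_{\litt\hs} \le \Psi\bigl(\|\ue\|_{\litt\hs} + \|\ve\|_{\litt\hs}\bigr)
\]
for some continuous increasing $\Psi$. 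The parabolic component $\eps\|\ue\|_{\lutt\hspd}$ is recovered from Theorem \ref{thstokes} applied to the momentum equation in \eqref{oddepsbis}.

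Combining these four estimates and using that $\eps\|\aeps\del\ue\|_{\lutt\hs} \lesssim \eps\|\ue\|_{\lutt\hspd}$ is already a component of $\eeps(t)$, the right-hand sides close onto $\eeps$, producing an integral inequality of the form
\[
\eeps(t) \le \Phi_1\bigl(\eeps(t)\bigr)\Bigl(\eeps(0) + \int_0^t \Phi_2\bigl(\eeps(\tau)\bigr)\,d\tau\Bigr),
\]
with $\Phi_1, \Phi_2$ continuous increasing and independent of $\eps$. A standard ODE-comparison argument then produces a time $T_0 = T_0\bigl(s,f',\rho_*,\rho^*,\|\nb\rez\|_\hs,\|u_0\|_\hs\bigr) > 0$, independent of $\eps$, on which $\eeps(t) \le 2\eeps(0)$; a continuation argument then upgrades this a priori bound into $T_\eps \ge T_0$. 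The main obstacle is to keep careful track of the $\eps$ factors in the parabolic source terms and to verify that the tildified time-space norms of Appendix \ref{sec:timespace} absorb them without introducing $\eps^{-1}$ powers; this is where the endpoint Littlewood-Paley framework is essential, as it prevents logarithmic losses in the transport estimates and allows the elliptic-transport loop to close uniformly in $\eps$.
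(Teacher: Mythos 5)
Your proposal is correct and follows essentially the same route as the paper: exploit the Elsässer formulation, estimate $\nb\Pe$ elliptically, bound $\ue$ (and its parabolic component $\eps\|\ue\|_{\lutt\hspd}$) via the Navier--Stokes-type estimate of Theorem \ref{thstokes}, bound $\ve$ by the transport estimate with source $-\aeps\nb\Pe+\eps\aeps\del\ue$ absorbed by the built-in parabolic term, recover $\|\nb\re\|_{\litt\hs}$ from $\ue-\ve=\nbp g(\re)$ via composition, and close with a bootstrap in which every non-initial term carries a positive power of $T$. The only cosmetic differences are that the paper applies Theorem \ref{thstokes} directly to the $\ue$-equation (rather than a transport estimate plus a separate parabolic recovery) and quotes its pressure bound instead of rederiving the elliptic estimate.
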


\begin{proof} 
Since $f$ is a $C^{\ps+3}$-diffeomorphism on $[\rho_*,\rho^*]$, then so is $g$, and in view of the relation \eqref{eq:Ue} and Proposition \ref{compo2}, we have for any $T\in(0,T_\eps]$,
\begin{equation}
    \label{eeps}
    \eeps(T) \lesssim \|\ue\|_{\litt\hs} + \eps\|\ue\|_{\lutt\hspd} + \|\ve\|_{\litt\hs}.
\end{equation}

\medskip
We now estimate the quantities appearing on the right-hand side. 

\medskip
From system \eqref{oddepsbis} and the above properties, we can reproduce the computations of Subsection \ref{sec:reformulation} to gather that 
\begin{equation}
    \label{elseps}
    \left\{
    \begin{aligned}
        & (\dt+\ue\cdot\nb)\re = 0, \\
        & \re(\dt + \ve\cdot\nb)\ue + \nb\Pe - \eps\del\ue = 0, \\
        & \re(\dt + \ue\cdot\nb)\ve + \nb\Pe - \eps\del\ue = 0, \\
        & \divv\,\ue = \divv\,\ve = 0.
    \end{aligned}
\right.
\end{equation}
The second equation reads
\[
(\dt + \ve\cdot\nb)\ue + \aeps\nb\Pe - \eps\aeps\del\ue = 0,
\]
where $\aeps:=1/\re$. Define
\[
\B_{\eps,T} := 1 + \|\nb\re\|_{\litt\hs}.
\]
As $(\re,\ue)\in E_T$, we deduce from Theorem \ref{thstokes} that $\ue$ satisfies the bound
\[
\|\ue\|_{\litt\hs} + \eps\|\ue\|_{\lutt\hspd} \le Ce^{CT\B_{\eps,T}^\lambda\|\ve\|_{\litt\hs}} \Big(\|\uzeps\|_\hs + \eps\B_{\eps,T}^\lambda T\|\ue\|_{\litt\hs}\Big),
\]
where we have also made use of Proposition \ref{compo2}. Setting
\[
E_0 := \|\nb\rez\|_\hs + \|\uzeps\|_\hs,
\]
we then have
\begin{equation}
    \label{ue}
    \|\ue\|_{\litt\hs} + \eps\|\ue\|_{\lutt\hspd} \le Ce^{CT(1+\eeps(T))^\lambda\eeps(T)} \left(E_0 + \eps(1+\eeps(T))^\lambda T\eeps(T)\right).
\end{equation}
Furthermore, we can bound $\nb\Pe$ as 
\[
\|\nb\Pe\|_{\lutt\hs} \le C\B_{\eps,T}^\lambda \Big(\big(T\|\ve\|_{\litt\hs} + \eps(T^{1/2}+T)\big)\|\ue\|_{\litt\hs} + \eps T^{1/6}\|\ue\|_{\lutt\hspd}\Big).
\]
It follows that 
\begin{equation}
    \label{nbPe}
    \|\nb\Pe\|_{\lutt\hs} \le C(1+\eeps(T))^\lambda \Big(\big(T\eeps(T) + \eps(T^{1/2}+T)\big)\eeps(T) + T^{1/6}\eeps(T)\Big).
\end{equation}

The third equation in \eqref{elseps} reads 
\[
(\dt + \ue\cdot\nb)\ve = -\aeps\nb\Pe + \eps\aeps\del\ue.
\]
Define 
\[
\vzeps := \uzeps - \nbp g(\rez).
\]
From the regularity properties \eqref{epsbound}, we can apply Theorem \ref{th:transport1} to gather that 
\[
\|\ve\|_{\litt\hs} \le Ce^{C\|\ue\|_{\lut\hs}}\Big(\|\vzeps\|_\hs + \|\aeps\nb\Pe\|_{\lutt\hs} + \eps\|\aeps\del\ue\|_{\lutt\hs}\Big).
\]
Owing to the tame estimate \eqref{eq:tame} and Proposition \ref{compo2}, we have 
\[
\|\aeps\nb\Pe\|_{\lutt\hs} \lesssim \B_{\eps,T}\|\nb\Pe\|_{\lutt\hs},
\]
and 
\[
\begin{split}
    \|\aeps\del\ue\|_{\lutt\hs} &\lesssim \|\ue\|_{\lutt\hspd} + \|\nb\aeps\|_{\litt\hsm}\|\ue\|_{\lut\hsp} \\
    &\lesssim \|\ue\|_{\lutt\hspd} + \|\nb\re\|_{\litt\hs}\|\ue\|_{\lutt H^{s+3/2}},
\end{split}
\]
where we have also used \eqref{timespace2} for the last inequality. Using \eqref{interpolation} then \eqref{timespace1}, we have 
\[
\|\ue\|_{\lutt H^{s+3/2}} \le \|\ue\|_{\lutt\hs}^{1/4}\|\ue\|_{\lutt\hspd}^{3/4} \le T^{1/4}\|\ue\|_{\litt\hs}^{1/4}\|\ue\|_{\lutt\hspd}^{3/4},
\]
so that
\[
\|\aeps\del\ue\|_{\lutt\hs} \lesssim T\|\nb\re\|_{\litt\hs}^4\|\ue\|_{\litt\hs} + \|\ue\|_{\lutt\hspd}.
\]
From these computations, we deduce that 
\[
\begin{split}
    \|\ve\|_{\litt\hs} \le Ce^{CT\|\ue\|_{\litt\hs}}\Big(&\|\vzeps\|_\hs + \B_{\eps,T}\|\nb\Pe\|_{\lutt\hs} \\ 
    &+ \eps T\|\nb\re\|_{\litt\hs}^4\|\ue\|_{\litt\hs} + \eps\|\ue\|_{\lutt\hspd}\Big).
\end{split}
\]
It follows that 
\[
\|\ve\|_{\litt\hs} \le Ce^{CT\eeps(T)}\Big(E_0 + (1+\eeps(T))\|\nb\Pe\|_{\lutt\hs} + \eps T\eeps(T)^5 + \eps\|\ue\|_{\lutt\hspd}\Big).
\]
From \eqref{ue} and \eqref{nbPe}, we then have 
\begin{align}
    \label{Ue}
    \|\ve\|_{\litt\hs} \le \;&Ce^{CT(1+\eeps(T))^\lambda\eeps(T)} \\ 
    \nonumber&\Big(E_0 + (1+\eeps(T))^{\lambda+1} \left[\left(T\eeps(T) + \eps(T^{1/2}+T)\right)\eeps(T) + T^{1/6}\eeps(T)\right] \\ 
    \nonumber&+\eps T\eeps(T)^5 + \eps(1+\eeps(T))^\lambda T\eeps(T)\Big).
\end{align}

Plugging now \eqref{ue} and \eqref{Ue} into \eqref{eeps}, we finally obtain that 
\begin{align}
    \label{eeps2}
    \eeps(T) \le \;&Ce^{CT(1+\eeps(T))^\lambda\eeps(T)} \\ 
    \nonumber&\Big(E_0 + (1+\eeps(T))^{\lambda+1} \left[\left(T\eeps(T) + (T^{1/2}+T)\right)\eeps(T) + T^{1/6}\eeps(T)\right] \\ 
    \nonumber&+ T\eeps(T)^5 + (1+\eeps(T))^\lambda T\eeps(T)\Big),
\end{align}
where we have also used the fact that $\eps\le1$. 

\medskip
Let us introduce the bounds
\begin{equation}
    \label{teps1}
    CT(1+\eeps(T))^\lambda\eeps(T)\le\log2,
\end{equation}
\begin{align}
    \label{teps2}
    &(1+\eeps(T))^{\lambda+1} \left[\left(T\eeps(T) + (T^{1/2}+T)\right)\eeps(T) + T^{1/6}\eeps(T)\right] \\
    \nonumber&+ T\eeps(T)^5 + (1+\eeps(T))^\lambda T\eeps(T)\le E_0.
\end{align}
We define the time 
\[
\teepse := \sup\Big\{0\le T\le\teeps : \eqref{teps1}-\eqref{teps2}\;{\rm are\;satisfied}\Big\}.
\]
Let us set $C_0:=4CE_0$. In view of \eqref{eeps2}, one has 
\begin{equation}
    \label{fepsteepse}
    \eeps(T) \le C_0,\qquad\forall\,0\le T\le\teepse.
\end{equation}
We then consider the bounds
\begin{equation}
    \label{teps1bis}
    CT(1+C_0)^\lambda C_0\le\log2,
\end{equation}
\begin{equation}
    \label{teps2bis}
    (1+C_0)^{\lambda+1} \left[\left(TC_0 + (T^{1/2}+T)\right)C_0 + T^{1/6}C_0\right] + TC_0^5 + (1+C_0)^\lambda TC_0\le E_0,
\end{equation}
and define the time
\[
T_0 := \sup\Big\{T\ge0 : \eqref{teps1bis}-\eqref{teps2bis}\;{\rm are\;satisfied}\Big\},
\]
which is independent of $0<\eps\le1$. By time continuity, at least one of the two conditions \eqref{teps1}-\eqref{teps2} becomes an equality at time $T=\teepse$: one has $C\teepse(1+\eeps(\teepse))^\lambda\eeps(\teepse)=\log2$, or  
\[
\begin{split}
    &(1+\eeps(\teepse))^{\lambda+1} \left[\left(\teepse\eeps(\teepse) + (\teepse^{1/2}+\teepse)\right)\eeps(\teepse) + \teepse^{1/6}\eeps(\teepse)\right] \\
    &+ \teepse\eeps(\teepse)^5 + (1+\eeps(\teepse))^\lambda\teepse\eeps(\teepse)=E_0.
\end{split}
\]
Using \eqref{fepsteepse}, one then has either $C\teepse(1+C_0)^\lambda C_0\ge\log2$, or  
\[
\begin{split}
    &(1+C_0)^{\lambda+1} \left[\left(\teepse C_0 + (\teepse^{1/2}+\teepse)\right)C_0 + \teepse^{1/6}C_0\right] \\
    &+ \teepse C_0^5 + (1+C_0)^\lambda\teepse C_0\ge E_0.
\end{split}
\]
By definition of $T_0$, we deduce that $\teepse\ge T_0$. By definition of $\teepse$, we also have that $\teeps\ge\teepse$. This implies that
\[
\inf_{0<\eps\le1} T_\eps\ge T_0,
\]
and that estimate \eqref{fepsteepse} holds true at time $T=T_0$:
\begin{equation}
    \label{eetz}
    \eeps(T_0)\le C_0.
\end{equation}
It remains to prove the uniform bound on $(\nb\Pe)_{0<\eps\le1}$. Let us define $\aeps:=1/\re$, which satisfies $\aeps\ge a_*$, where $a_*$ is defined by \eqref{anp}. From the second equation in \eqref{elseps} and the divergence-free condition on $\ue$, the pressure gradient $\nb\Pe$ satisfies the elliptic equation
\[
-\divv(\aeps\nb\Pe) = \divv\big((\ve\cdot\nb)\ue - \eps\aeps\del\ue\big).
\]

Using \eqref{eetz} together with Proposition \ref{compo2} and \eqref{timespace1}, we gather that the sequence $((\ve\cdot\nb)\ue)_{0<\eps\le1}$ is uniformly bounded in $L_{T_0}^\infty\hsm$.

Next, we know from \eqref{eetz} that the sequences $(\ue)_{0<\eps\le1}$ and $(\eps\ue)_{0<\eps\le1}$ are uniformly bounded, respectively, in $\widetilde{L_{T_0}^\infty}\hs$ and $\widetilde{L_{T_0}^1}\hspd$. By interpolation \eqref{interpolation}, we deduce that $(\eps^{1/2}\ue)_{0<\eps\le1}$ is uniformly bounded in $\widetilde{L_{T_0}^2}\hsp=L_{T_0}^2\hsp$, which in turn implies that $(\eps^{1/2}\del\ue)_{0<\eps\le1}$ is uniformly bounded in $L_{T_0}^2\hsm$. From Proposition \ref{compo2} and Corollary \ref{tameestimates}, this also implies that $(\eps\aeps\del\ue)_{0<\eps\le1}$ is uniformly bounded in $L_{T_0}^2\hsm$.

We finally gather from Proposition \ref{elliptichigh} that $(\nb\Pe)_{0<\eps\le1}$ is uniformly bounded in $L_{T_0}^2\hsm$. This completes the proof. 
\end{proof}

\section{Well-posedness for the original system}
\label{sec:proofth}

In this section, we conclude the proof of Theorem \ref{main}. It remains to show the convergence of the family $(\re,\ue,\nb\Pe)_{0<\eps\le1}$ towards a solution of system \eqref{odd3}, to prove the claimed regularity properties and energy equalities for this solution, and finally, to prove uniqueness.

\subsection{Proof of existence}
\label{sec:existence}

As a first step, we obtain the convergence of the family $(\re,\ue,\nb\Pe)_{0<\eps\le1}$, provided by Theorem \ref{th:oddeps}, towards a solution $(\rho,u,\nb\Pi)$ of system \eqref{odd3}. We then obtain a solution $(\rho,u,\nb\pi)$ to the original system \eqref{odd}, and prove the regularity properties and energy equalities stated in Theorem \ref{main}.

\medskip
As usual, we will simply denote $T\equiv T_0$, where $T_0$ is given by Theorem \ref{th:oddeps}.

\paragraph{Convergence of the sequence of regularised solutions.} For all $0<\eps\le1$, we have on $[0,T]\times\R^2$:
\begin{equation}
\label{oddeps2}
\left\{
\begin{aligned}
& (\dt + \ue\cdot\nb)\re = 0, \\
& \re(\dt + \ve\cdot\nb)\ue + \nb\Pe - \eps\del\ue = 0, \\
& \divv\,\ue = 0, \\
& (\re,\ue)_{|t=0} = (\rz,\uz),
\end{aligned}
\right.
\end{equation}
We aim to take the limit as $\eps\to0$ in some suitable functional space in the above system. 

From the first equation in \eqref{oddeps2}, 
\begin{equation}
    \label{rhonprz}
    \re(t)-\rz = -\int_0^t \ue\cdot\nb\re\dd\tau,\qquad\forall\,t\in[0,T].
\end{equation}
Using this, we deduce from \eqref{epsbound} and \eqref{timespace1} that the sequence $(\re-\rz)_{0<\eps\le1}$ is uniformly bounded in $\lit\hs$. We thus obtain the existence of some $r\in\lit\hs$ such that, up to extraction of a suitable subsequence, 
\[
\re-\rz\weaks r\qquad\mathrm{in}\;\lit\hs,\qquad\mathrm{as}\; \eps\to0.
\]
After defining $\rho:=r+\rz$, we then have 
\begin{equation}
    \label{cvw:rhon}
    \re-\rz\weaks\rho-\rz\qquad\mathrm{in}\;\lit\hs,\qquad\mathrm{as}\; \eps\to0.
\end{equation}
Using again the first equation in \eqref{oddeps2}, we have 
\[
\dt\re = -\ue\cdot\nb\re,
\]
so that, arguing as before, the sequence $(\pt_t\re)_{0<\eps\le1}$ is uniformly bounded in $\lit\hs$. The Aubin-Lions lemma \cite{Simon1987} then implies, up to extracting a suitable subsequence, the strong convergence 
\begin{equation}
    \label{cv:rhon}
    \re-\rz\to\rho-\rz\qquad\mathrm{in}\;\ct\hsig(B_R),\qquad\forall\,R>0,\qquad\forall\,\sig<s,\qquad\mathrm{as}\; \eps\to0,
\end{equation}
where we have also used \eqref{cvw:rhon} to deduce that the limit is indeed $\rho-\rz$, and performed a diagonal extraction to find a uniform subsequence in $R>0$. 

Finally, coming back to the uniform boundedness of $(\dt\re)_{0<\eps\le1}$ in $\lit\hs$, we deduce that 
\begin{equation}
    \label{cv:dtrhon}
    \dt\re\weaks\dt\rho\qquad\mathrm{in}\;\lit\hs,\qquad\mathrm{as}\;\eps\to0.
\end{equation}

\medskip
We now derive similar convergence properties for $(\ue)_{0<\eps\le1}$. From \eqref{epsbound} and \eqref{timespace1}, we gather the existence of some $u\in\lit\hs$ such that, up to extraction of a suitable subsequence, 
\begin{equation}
    \label{cvw:un}
    \ue\weaks u\qquad\mathrm{in}\;\lit\hs,\qquad\mathrm{as}\;\eps\to0.
\end{equation}
Consider now the second equation in \eqref{oddeps2}. We have 
\[
\pt_t\ue = - (\ve\cdot\nb)\ue - \aeps\nb\Pe + \eps\aeps\del\ue,
\]
where $\aeps:=1/\re$.

We know from the proof of Proposition \ref{epsbounds} that $\left((\ve\cdot\nb)\ue\right)_{0<\eps\le1}$ is uniformly bounded in $\lit\hsm$, and $(\eps\aeps\del\ue)_{0<\eps\le1}$ is uniformly bounded in $\ldt\hsm$. Moreover, $(\nb\Pe)_{0<\eps\le1}$ is uniformly bounded in $\ldt\hsm$, and so is $(\aeps\nb\Pe)_{0<\eps\le1}$. 

With all these bounds, we gather that $(\pt_t\ue)_{0<\eps\le1}$ is uniformly bounded in $\ldt\hsm$. As also $(\ue)_{0<\eps\le1}$ is uniformly bounded in $\lit\hs$, we can make use of the convergence property \eqref{cvw:un} and argue as for the sequence $(\re-\rz)_{0<\eps\le1}$ above to obtain, up to extraction of a suitable subsequence, 
\begin{equation}
    \label{cv:un}
    \ue\to u\qquad\mathrm{in}\;\ct\hsig(B_R),\qquad\forall\,R>0,\qquad\forall\,\sig<s,\qquad\mathrm{as}\;\eps\to0.
\end{equation}

Using again that $(\pt_t\ue)_{0<\eps\le1}$ is uniformly bounded in $\ldt\hsm$, we have
\begin{equation}
    \label{cv:dtun}
    \dt\ue\weak\dt u\qquad\mathrm{in}\;\ldt\hsm,\qquad\mathrm{as}\;\eps\to0.
\end{equation}
Arguing as before, we also gather that $\divv\,u=0$ on $[0,T]\times B_R$, and that $(\ve)_{0<\eps\le1}$ satisfies the strong convergence 
\begin{equation}
    \label{cv:Un}
    \ve\to U\qquad\mathrm{in}\;\ct H^\sig(B_R),\qquad\forall\,R>0,\qquad\forall\,\sig<s,\qquad\mathrm{as}\;\eps\to0,
\end{equation}
where we have defined the divergence-free vector field
\[
U := u - \nbp g(\rho).
\]
From the proof of Proposition \ref{epsbounds}, we know that $(\eps^{1/2}\del\ue)_{0<\eps\le1}$ is uniformly bounded in $\ldt\hsm$, which implies 
\begin{equation}
    \label{cv:epsdel}
    \eps\del\ue\to0\qquad\mathrm{in}\;\ldt\hsm,\qquad\mathrm{as}\;\eps\to0.
\end{equation}
We also know that $(\nb\Pe)_{0<\eps\le1}$ is uniformly bounded in $\ldt\hsm$: there exists $\nb\Pi\in\ldt\hsm$ such that 
\begin{equation}
    \label{cv:nbPin}
    \nb\Pe\weak\nb\Pi\qquad\mathrm{in}\;\ldt\hsm,\qquad\mathrm{as}\;\eps\to0.
\end{equation}
In view of the convergence properties \eqref{cv:rhon}, \eqref{cv:dtrhon}, \eqref{cv:un}, \eqref{cv:dtun}, \eqref{cv:Un}, \eqref{cv:epsdel} and \eqref{cv:nbPin}, we can take the limit as $\eps\to0$ in the weak formulation of system \eqref{oddepsbis} on $[0,T]\times B_R$, to gather that the triplet $(\rho,u,\nb\Pi)$ solves on $[0,T]\times B_R$ the system
\begin{equation}
    \label{odd2bis}
    \left\{
    \begin{aligned}
    & (\dt + u\cdot\nb)\rho = 0, \\
    & \rho(\dt + U\cdot\nb)u + \nabla\Pi = 0, \\
    & \divv\,u = 0, \\
    & (\rho,u)_{|t=0} = (\rz,\uz).
\end{aligned}
\right.
\end{equation}
This being valid for any $R>0$, we finally obtain that $(\rho,u,\nb\Pi)$ is a solution on $[0,T]\times\R^2$ to system \eqref{odd2bis}. From the first equation, we also deduce that $\rho_*\le\rho\le\rho^*$ on $[0,T]\times\R^2$.

\medskip
In view of the above properties, we can reproduce the computations of Subsection \ref{sec:reformulation} to gather that the quadruple $(\rho,u,U,\nb\Pi)$ solves the Elsässer formulation \eqref{els}.

\medskip
Now, define 
\begin{equation}
    \label{eq:pi}
    \pi := \Pi + f(\rho)\om,
\end{equation}
with $\om:=\curl(u)=\pt_1u_2-\pt_2u_1$. Performing the computations of Subsection \ref{sec:reformulation} backwards, we immediately gather that the triplet $(\rho,u,\nb\pi)$ is a solution of the original system \eqref{odd}.

\paragraph{Regularity properties.} Let us rewrite the Elsässer formulation \eqref{els} on $[0,T]\times\R^2$ as
\begin{equation}
    \label{odd2els}
    \left\{
    \begin{aligned}
    & (\dt + u\cdot\nb)\rho = 0, \\
    & (\dt + U\cdot\nb)\rho = 0, \\
    & (\dt + U\cdot\nb)u + a\nb\Pi = 0, \\
    & (\dt + u\cdot\nb)U + a\nb\Pi = 0, \\
    & \divv\,u = \divv\,U = 0, 
\end{aligned}
\right.
\end{equation}
where we have defined $a:=1/\rho$. Recall that, at this point, we only have the regularity properties $(\nb\rho,u,U)\in\left(\lit\hs\right)^3$ and $\nb\Pi\in\ldt\hsm$.

\medskip
To begin with, let us investigate the regularity of the pressure gradient, which satisfies the elliptic equation 
\begin{equation}
    \label{anbPi}
    -\divv(a\nb\Pi) = \divv\big((U\cdot\nb)u\big).
\end{equation}
From the above regularity properties, we have $u$, $U\in\lit\hs$. Using also the ellipticity property $a\ge a_*>0$, we can apply Proposition \ref{elliptichigh} to obtain that $\nb\Pi\in\lit\hs$. Notice that from the usual tame estimates, we also have $a\nb\Pi\in\lit\hs$.

Next, we turn our attention to the velocity field $u$. Let us rewrite the third equation in \eqref{odd2els} as 
\[
(\dt + U\cdot\nb)u = -a\nb\Pi.
\]
Since the right-hand side and the transport field $U$ belong to $\lit\hs$, and the initial datum $u_0$ belongs to $\hs$, we can apply Theorem \ref{th:transport2} to gather that $u\in\ct\hs$.

Arguing in the same way for the fourth equation in \eqref{odd2els}, we find that $U\in\ct\hs$. Since we also have $u\in\ct\hs$, we deduce that $\nbp g(\rho)\in\ct\hs$. From the assumptions on $g$, we finally obtain that $\nb\rho\in\ct\hs$.

From these new regularity properties for $u$ and $U$, we deduce from \eqref{anbPi} and the classical theory for elliptic equations that $\nb\Pi\in\ct\hs$.

Let us now complete the regularity properties for the density $\rho$. From the first equation in \eqref{odd2els}, we have 
\[
\rho-\rz = -\int_0^t u\cdot\nb\rho,
\]
from which we deduce that $\rho-\rz\in\ct\hs$. We also have 
\[
\pt_t(\rho-\rz) = \pt_t\rho = - u\cdot\nb\rho,
\]
so that $\pt_t(\rho-\rz)\in\ct\hs$. We thus have $\rho-\rz\in\cut\hs$. From this, we also deduce that $\nb(\rho-\rz)\in\cut\hsm$. Since we have $\nb\rz\in\hs$, it follows that $\nb\rho\in\cut\hsm$.

Now, since $\rho_*\le\rho\le\rho^*$, we deduce from the first equation in \eqref{odd2els} that 
\[
(\dt + u\cdot\nb)a = 0.
\]
Arguing as for $\rho$, we then have that $a-a_0\in\cut\hs$. From this and the fact that $\nb\Pi\in\ct\hs$, we deduce that $(a-a_0)\nb\Pi\in\ct\hs$. Using again that $\nb\Pi\in\ct\hs$ and the classical tame estimates, we have $a_0\nb\Pi\in\ct\hs$, so that finally $a\nb\Pi\in\ct\hs$.

Since 
\[
\pt_tu = -(U\cdot\nb)u - a\nb\Pi,
\]
we deduce that $\pt_tu\in\ct\hsm$. We thus have that $u\in\cut\hsm$.

It remains to verify the claimed regularity for $\nb\pi$. From \eqref{eq:pi} and \eqref{anbPi}, we see that $\nb\pi$ satisfies the elliptic equation 
\[
-\divv(a\nb\pi) = \divv\big((U\cdot\nb)u - a\nb(f(\rho)\om)\big).
\]
Next, using the standard tame and paralinearisation estimates, we have 
\[
\|\nb(f(\rho)\om)\|_\hsmd \le \|f(\rho)\om\|_\hsm \lesssim \big(\|f\|_{\li([\rho_*,\rho^*])} + \|\nb\rho\|_\hsmd\big)\|u\|_\hs,
\]
so that $\nb(f(\rho)\om)\in\lit\hsmd$, which in turn implies, using again the tame and paralinearisation estimates, that $a\nb(f(\rho)\om)\in\lit\hsmd$. From this and the previous regularity properties, we deduce from classical elliptic theory that $\nb\pi\in\ct\hsmd$. This completes the proof of the regularity properties of Theorem \ref{main}.

\paragraph{Energy equalities.} Let us write the third equation in \eqref{odd2els} as 
\begin{equation}
    \label{odd2bis2}
    \rho(\dt + U\cdot\nb)u + \nabla\Pi = 0.
\end{equation}
From the above regularity properties, we can write
\[
\int\rho\pt_tu\cdot u \dd x= \frac{1}{2}\ddt\|\sqrt{\rho}u\|_\ld^2 - \frac{1}{2}\int\pt_t\rho|u|^2\dd x,\qquad \int\rho(U\cdot\nb)u\cdot u \dd x= -\frac{1}{2}\int(U\cdot\nb\rho)|u|^2 \dd x,
\]
so that
\[
\int\rho\big(\dt + U\cdot\nb\big)u\cdot u \dd x = \frac{1}{2}\ddt\|\sqrt{\rho}u\|_\ld^2 - \frac{1}{2}\int\big(\dt+U\cdot\nb\big)\rho\cdot|u|^2 \dd x = \frac{1}{2}\ddt\|\sqrt{\rho}u\|_\ld^2,
\]
where we have also used the second equation in \eqref{odd2els} and the divergence-free condition on $U$. Since $u$ is divergence-free, it holds that
\[
\int \nb\Pi\cdot u \dd x = 0.
\]
We can thus perform an energy estimate on equation \eqref{odd2bis2} to gather that 
\[
\frac{1}{2}\ddt\|\sqrt{\rho}u\|_\ld^2 = 0,
\]
which yields the claimed energy equality for $(\rho,u)$.

\medskip
In view of the Elsässer formulation \eqref{odd2els}, we can switch the roles of $u$ and $U$ in the above computations, to gather the desired energy equality for $(\rho,U)$.

\medskip
The proof of the existence statement of Theorem \ref{main} is now complete.

\subsection{Proof of uniqueness}
\label{sec:uniqueness}

We establish a stability criterion for solutions to the Elsässer formulation \eqref{els}, which implies the uniqueness statement of Theorem \ref{main}.

\medskip
The following result is inspired from \cite[Proposition 5.3]{fv}.

\begin{proposition}
    \label{prop:stab}
    Let $T>0$. Assume that we dispose of two quadruples $(\rho^{(1)},u^{(1)},U^{(1)},\nb\Pi^{(1)})$ and $(\rho^{(2)},u^{(2)},U^{(2)},\nb\Pi^{(2)})$ of solutions on $[0,T]\times\R^2$ to the Elsässer formulation \eqref{els} 
    \begin{equation}
    \label{oddelsstab}
    \left\{
    \begin{aligned}
    & (\dt + u\cdot\nb)\rho = 0, \\
    & (\dt + U\cdot\nb)\rho = 0, \\
    & \rho(\dt + U\cdot\nb)u + \nb\Pi = 0, \\
    & \rho(\dt + u\cdot\nb)U + \nb\Pi = 0, \\
    & \divv\,u = \divv\,U = 0.
    \end{aligned}
    \right.
    \end{equation}
    Suppose that:
    \begin{itemize}
        \item $\rho^{(j)}(t,x)\in[\rho_*,\rho^*]$, for some $0<\rho_*<\rho^*<\infty$ and all $(t,x)\in[0,T]\times\R^2$ and $j=1,2$;
        \item $\nb\rho^{(2)}$, $\nb u^{(2)}$, $\nb U^{(2)}$, $\nb\Pi^{(2)}$ belong to $L^1([0,T];\li(\R^2))$.
    \end{itemize}
    For $h\in\{\rho,u,U,\nb\Pi\}$, set $\delta h:=h^{(1)}-h^{(2)}$, and define for all $t\in[0,T]$ the energy
    \[
    \D(t) := \left\|\left(\delta\rho,\delta u,\delta U\right)(t)\right\|_\ld^2.
    \]
    Then, there exists a constant $C=C(\rho_*,\rho^*)>0$, depending only on the quantities inside the brackets, such that 
    \[
    \sup_{t\in[0,T]}\D(t) \le Ce^{C\int_0^T I(t)\dd t}\D(0),
    \]
    where $I\in L^1([0,T])$ is defined by 
    \begin{equation}
        \label{eq:it}
        I(t) := \|\nb\rho^{(2)}(t)\|_\li + \|\nb u^{(2)}(t)\|_\li + \|\nb U^{(2)}(t)\|_\li + \|\nb\Pi^{(2)}(t)\|_\li,\qquad\forall\,t\in[0,T].
    \end{equation}
\end{proposition}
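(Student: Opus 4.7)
The core difficulty in proving Proposition \ref{prop:stab} is the pressure: the energy $\D(t)$ only controls $(\delta\rho,\delta u,\delta U)$ in $\ld$, whereas $\nb\delta\Pi$ carries no a priori $\ld$ bound. The whole strategy is therefore to set up energy estimates in which $\nb\delta\Pi$ never appears, by exploiting the divergence-free constraints on $\delta u$ and $\delta U$ together with a suitable renormalisation of the momentum equations.

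First I would derive the difference equations. Subtracting the first equation of \eqref{oddelsstab} for the two solutions gives $(\dt + u^{(1)}\cdot\nb)\delta\rho + (\delta u\cdot\nb)\rho^{(2)} = 0$. The key move is on the momentum equations: before subtracting them, I multiply the $u^{(2)}$-equation by $\rho^{(1)}/\rho^{(2)}$, which is bounded in $[\rho_*/\rho^*,\rho^*/\rho_*]$, so that both equations share the common prefactor $\rho^{(1)}$. The difference then reads
\[
\rho^{(1)}\dt\delta u + \rho^{(1)}(U^{(1)}\cdot\nb)\delta u + \rho^{(1)}(\delta U\cdot\nb)u^{(2)} + \nb\Pi^{(1)} - \frac{\rho^{(1)}}{\rho^{(2)}}\nb\Pi^{(2)} = 0,
\]
and an analogous identity for $\delta U$ is obtained by exchanging the roles of $u$ and $U$ (using the fourth equation in \eqref{oddelsstab}).

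Testing the $\delta u$ equation against $\delta u$ exhibits the pressure cancellation. Since $\divv\delta u = 0$, one has $\int\nb\Pi^{(1)}\cdot\delta u\,\dd x = 0$ and $\int\nb\Pi^{(2)}\cdot\delta u\,\dd x = 0$, so writing $\rho^{(1)}/\rho^{(2)} = 1 + \delta\rho/\rho^{(2)}$ collapses the pressure contribution to $-\int(\delta\rho/\rho^{(2)})\nb\Pi^{(2)}\cdot\delta u\,\dd x$, readily bounded by $\rho_*^{-1}\|\nb\Pi^{(2)}\|_\li\|\delta\rho\|_\ld\|\delta u\|_\ld$. The time-derivative and convection terms combine into $\frac{1}{2}\ddt\|\sqrt{\rho^{(1)}}\delta u\|_\ld^2$ thanks to $\divv U^{(1)}=0$ together with the transport equation $(\dt+U^{(1)}\cdot\nb)\rho^{(1)}=0$, and the remaining term is controlled by $\rho^*\|\nb u^{(2)}\|_\li\|\delta U\|_\ld\|\delta u\|_\ld$. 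A symmetric argument yields the parallel bound for $\|\sqrt{\rho^{(1)}}\delta U\|_\ld^2$. For the density, testing the equation for $\delta\rho$ against itself and using $\divv u^{(1)}=0$ gives $\frac{1}{2}\ddt\|\delta\rho\|_\ld^2 \le \|\nb\rho^{(2)}\|_\li\|\delta u\|_\ld\|\delta\rho\|_\ld$.

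Summing the three resulting energy inequalities, noting that $\|\sqrt{\rho^{(1)}}\,\cdot\,\|_\ld$ is equivalent to $\|\cdot\|_\ld$ thanks to $\rho_*\le\rho^{(1)}\le\rho^*$, and applying Young's inequality yields $\ddt\D(t)\le C\,I(t)\,\D(t)$ for some constant $C=C(\rho_*,\rho^*)$. Since $I\in L^1([0,T])$ by hypothesis, Grönwall's lemma then gives the claimed estimate. The pressure manipulation is really the only subtle step; every other bound is a routine Hölder estimate against the quantities appearing in $I(t)$ defined by \eqref{eq:it}.
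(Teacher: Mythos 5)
Your proposal is correct and follows essentially the same route as the paper: the same difference equations (your multiplication of the second momentum equation by $\rho^{(1)}/\rho^{(2)}$ is algebraically identical to the paper's renormalisation by $1/\rho$ followed by multiplication by $\rho^{(1)}$), the same pressure cancellation via $\divv\delta u=\divv\delta U=0$ with the residual term $-\int(\delta\rho/\rho^{(2)})\nb\Pi^{(2)}\cdot\delta u\dd x$, and the same weighted energies $\|\sqrt{\rho^{(1)}}\delta u\|_\ld$ closed by Grönwall. No gaps.
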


\begin{proof}
    Let us start by estimating $\delta\rho$. Writing the first equation in \eqref{oddelsstab} for both $(\rho^{(1)},u^{(1)})$ and $(\rho^{(2)},u^{(2)})$, and taking the difference, we find that $\delta\rho$ satisfies the transport equation
    \[
    \big(\dt + u^{(1)}\cdot\nb\big)(\delta\rho) = -\delta u\cdot\nb\rho^{(2)}.
    \]
    Multiplying this equation by $\delta\rho$ and integrating by parts, we get
    \[
    \frac{1}{2}\ddt\|\delta\rho\|_\ld^2 \le \|\delta\rho\|_\ld \|\delta u\|_\ld \|\nb\rho^{(2)}\|_\li \lesssim \left(\|\delta\rho\|_\ld^2 + \|\delta u\|_\ld^2\right)\|\nb\rho^{(2)}\|_\li,
    \]
    where we have also used the divergence-free condition on $u^{(1)}$. We then obtain that 
    \begin{equation}
        \label{eq:deltrho}
        \|\delta\rho(t)\|_\ld^2 \lesssim \|\delta\rho(0)\|_\ld^2 + \int_0^t \|\nb\rho^{(2)}(\tau)\|_\li \D(\tau)\dd\tau,\qquad \forall\,t\in[0,T].
    \end{equation}
    
    \medskip
    We now estimate $\delta u$. Let us reformulate the third equation in \eqref{oddelsstab} as 
    \[
    (\dt + U\cdot\nb)u + \frac{1}{\rho}\nb\Pi = 0.
    \]
    Writing this equation for both $(\rho^{(1)},u^{(1)},U^{(1)},\nb\Pi^{(1)})$ and $(\rho^{(2)},u^{(2)},U^{(2)},\nb\Pi^{(2)})$, taking the difference, and multiplying the resulting expression by $\rho^{(1)}$, we find that $\delta u$ satisfies the transport equation
    \begin{equation}
        \label{nbPidiff}
        \rho^{(1)}\big(\dt + U^{(1)}\cdot\nb\big)(\delta u) + \delta(\nb\Pi) = -\rho^{(1)}(\delta U\cdot\nb)u^{(2)} + \frac{\delta\rho}{\rho^{(2)}}\nb\Pi^{(2)}.
    \end{equation}
    Since
    \begin{align*}
        &\int_{\R^2}\rho^{(1)}\pt_t(\delta u)\cdot (\delta u) \dd x = \frac{1}{2}\ddt\|\sqrt{\rho^{(1)}}(\delta u)\|_\ld^2 - \frac{1}{2}\int_{\R^2}\pt_t\rho^{(1)}|\delta u|^2\dd x, \\
        &\int_{\R^2}\rho(U^{(1)}\cdot\nb)(\delta u)\cdot (\delta u)\dd x = -\frac{1}{2}\int_{\R^2}(U^{(1)}\cdot\nb\rho^{(1)})|\delta u|^2\dd x,
    \end{align*}
    we infer that
    \begin{align*}
        \int_{\R^2}\rho^{(1)}\left(\pt_t + U^{(1)}\cdot\nb\right)(\delta u)\cdot (\delta u)\dd x &= \frac{1}{2}\ddt\|\sqrt{\rho^{(1)}}(\delta u)\|_\ld^2 - \frac{1}{2}\int_{\R^2}\left(\dt+U^{(1)}\cdot\nb\right)\rho^{(1)}\cdot|\delta u|^2\dd x \\
        &= \frac{1}{2}\ddt\|\sqrt{\rho^{(1)}}(\delta u)\|_\ld^2,
    \end{align*}
    where we have also used the second equation in \eqref{oddelsstab} and the divergence-free condition on $U^{(1)}$. Since $\delta u$ is divergence-free, it holds that
    \[
    \int_{\R^2} \delta(\nb\Pi)\cdot (\delta u)\dd x = 0.
    \]
    We can thus perform an energy estimate to gather that 
    \begin{align*}
        \frac{1}{2}\ddt\|\sqrt{\rho^{(1)}}(\delta u)\|_\ld^2 &\lesssim \|\delta u\|_\ld\left(\|\delta U\|_\ld\|\nb u^{(2)}\|_\li + \|\delta\rho\|_\ld\|\nb\Pi^{(2)}\|_\li\right) \\
        &\lesssim \left(\|\delta\rho\|_\ld^2 + \|\delta u\|_\ld^2 + \|\delta U\|_\ld^2\right)\left(\|\nb u^{(2)}\|_\li + \|\nb\Pi^{(2)}\|_\li\right),
    \end{align*}
    for an implicit constant depending only on $\rho_*$ and $\rho^*$. From this and the fact that $\rho^{(1)}\ge\rho_*$, we obtain 
    \begin{equation}
        \label{eq:deltu}
        \|\delta u(t)\|_\ld^2 \lesssim \|\delta u(0)\|_\ld^2 + \int_0^t \left(\|\nb u^{(2)}(\tau)\|_\li + \|\nb\Pi^{(2)}(\tau)\|_\li\right) \D(\tau)\dd\tau,\qquad \forall\,t\in[0,T].
    \end{equation}
    
    \medskip
    It remains to estimate $\delta U$. The roles of the variables $u$ and $U$ being exactly symmetrical in system \eqref{oddelsstab}, one can perform the same computations as for $\delta u$ to deduce that
    \begin{equation}
        \label{eq:deltU}
        \|\delta U(t)\|_\ld^2 \lesssim \|\delta U(0)\|_\ld^2 + \int_0^t \left(\|\nb U^{(2)}(\tau)\|_\li + \|\nb\Pi^{(2)}(\tau)\|_\li\right) \D(\tau)\dd\tau,\qquad \forall\,t\in[0,T].
    \end{equation}
    
    \medskip
    Summing up estimates \eqref{eq:deltrho}, \eqref{eq:deltu}, \eqref{eq:deltU}, we gather that 
    \[
    \D(t) \lesssim \D(0) + \int_0^t I(\tau)\D(\tau)\dd\tau,
    \]
    where $I(t)$ is defined by \eqref{eq:it}. An application of the Grönwall lemma finally yields the desired inequality.
\end{proof} 

We are now ready to prove the uniqueness statement of Theorem \ref{main}.

\medskip\noindent
Let $(\rho^{(1)},u^{(1)},\nb\pi^{(1)})$ and $(\rho^{(2)},u^{(2)},\nb\pi^{(2)})$ be two solutions of system \eqref{odd} on $[0,T]\times\R^2$, for some $0<T<\infty$, stemming from the same initial data $(\rho_0,u_0)$ satisfying the conditions of Theorem \ref{main}. We define 
\[
U^{(j)} := u^{(j)} - \nbp g(\rho^{(j)}),\qquad \Pi^{(j)} := \pi^{(j)} - f(\rho^{(j)})\om^{(j)},\qquad j=1,2,
\]
where $\om^{(j)}:=\curl(u^{(j)})$. Then, the quadruples $(\rho^{(1)},u^{(1)},U^{(1)},\nb\Pi^{(1)})$ and $(\rho^{(2)},u^{(2)},U^{(2)},\nb\Pi^{(2)})$ solve the Elsässer formulation \eqref{oddelsstab}, and satisfy the boundedness and regularity properties of Theorem \ref{main}. In particular, the first condition of Proposition \ref{prop:stab} is satisfied, and, since $T<\infty$, we immediately have that $\nb\rho^{(2)}$, $\nb u^{(2)}$ and $\nb\Pi^{(2)}$ belong to $L^1([0,T];\li(\R^2))$. Moreover, as 
\[
\|\nb^2g(\rho^{(2)})\|_\li \lesssim \|g''(\rho^{(2)})\|_\li \|\nb\rho^{(2)}\|_\li^2 + \|g'(\rho^{(2)})\|_\li \|\nb^2\rho^{(2)}\|_\li,
\]
we also have $\nb U^{(2)}\in L^1([0,T];\li(\R^2))$. We can thus apply Proposition \ref{prop:stab} to gather that 
\[
\rho^{(1)} = \rho^{(2)}\qquad\mathrm{and}\qquad u^{(1)} = u^{(2)}\qquad\mathrm{in}\;L^\infty([0,T];\ld(\R^2)),
\]
thus almost everywhere on $[0,T]\times\R^2$. As these functions are continuous on $[0,T]\times\R^2$, these equalities are valid everywhere on $[0,T]\times\R^2$. 

From equation \eqref{nbPidiff}, we see that $\delta(\nb\Pi)$ satisfies the elliptic equation 
\[
-\divv\Big(\frac{1}{\rho^{(1)}}\delta(\nb\Pi)\Big) = \divv\Big(-\frac{\delta\rho}{\rho^{(1)}\rho^{(2)}}\nb\Pi^{(2)} + \left(U^{(1)}\cdot\nb\right)(\delta u) + (\delta U\cdot\nb)u^{(2)}\Big) = 0.
\]
We can thus apply Proposition \ref{elliptichigh} to gather that $\delta(\nb\Pi)=0$ in $C([0,T];\hs(\R^2))$, hence $\nb\Pi^{(1)}=\nb\Pi^{(2)}$ everywhere on $[0,T]\times\R^2$, as $s>2$.

From the above properties, we finally deduce that $\nb\pi^{(1)}=\nb\pi^{(2)}$ everywhere on $[0,T]\times\R^2$.

\medskip
The proof of the uniqueness statement of Theorem \ref{main} is now complete.

\appendix

\section{Analysis of a Navier--Stokes type system}
\label{sec:stokes}

We consider the system           
\begin{equation}
\label{stokes}
\left\{
\begin{aligned}
& (\dt + v\cdot\nb)u + a\nb\Pi - \nu a\del u = 0, \\
& \divv\,u = 0, \\
& u_{|t=0} = u_0,
\end{aligned}
\right.
\end{equation}
where the functions $u_0$, $a$, $v$, and the viscosity parameter $\nu>0$ are \emph{fixed} given data. We are interested in solving \eqref{stokes} in the variables $(u,\nb\Pi)$. 

\medskip
The following result is an adaptation of \cite[Propositions 3.2 and 3.4]{danchin2006}, where we provide new estimates for the couple $(u,\nb\Pi)$.
\begin{theorem}
\label{thstokes}
    Let $T>0$, $s>2$ and $\nu>0$. Let $u_0\in\hs$ be such that $\divv\,u_0=0$. Let $(a,v)\in\li([0,T]\times\R^2)\times\left(\litt\hs\cap\lut\hs\right)(\R^2)$ be such that 
    \[
    0<a_*:=\inf_{(t,x)\in[0,T]\times\R^2}a(t,x)\le a^*:=\|a\|_\li,\qquad \nb a\in\litt\hs(\R^2), \qquad\divv\,v=0.
    \]
    Then, there exists a unique solution $(u,\nb\Pi)$ to system \eqref{stokes} such that 
    \[
    u\in\ctt\hs(\R^2),\qquad \nu u\in\lutt\hspd(\R^2),\qquad \nb\Pi\in\lutt\hs(\R^2).
    \]
    Moreover, after defining
    \begin{equation}
        \label{at}
        \A_T := 1 + \|\nb a\|_{\litt\hs},
    \end{equation}
    there exist constants $C=C(s,a_*,a^*)>0$ and $\lambda=\lambda(s)>0$, depending only on the quantities inside the brackets, such that the solution $(u,\nb\Pi)$ satisfies the estimates
    \[
    \|u\|_{\litt\hs} + \nu\|u\|_{\lutt\hspd} \le Ce^{C\A_T^\lambda\|v\|_{\lut\hs}} \left(\|u_0\|_\hs + \nu T\A_T^\lambda\|u\|_{\litt\hs}\right),
    \]
    \[
    \|\nb\Pi\|_{\lutt\hs} \le C\A_T^\lambda \left(\left(T\|v\|_{\litt\hs} + \nu\left(T^{1/2}+T\right)\right)\|u\|_{\litt\hs} + \nu T^{1/6}\|u\|_{\lutt\hspd}\right).
    \]
\end{theorem}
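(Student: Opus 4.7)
The plan is to follow the two-step strategy standard for linear problems with variable coefficients: first derive the two claimed a priori estimates, then construct a solution by a Friedrichs spectral truncation and pass to the limit, with uniqueness following from an analogous estimate applied to the difference of two solutions. Since existence and uniqueness essentially reduce to \cite{danchin2006}, I would concentrate on the quantitative bounds, which are the genuine new content of the statement. The key preliminary observation is that applying $\divv$ to the momentum equation and using $\divv u=\divv v=0$ reduces the pressure to the elliptic equation
\[
-\divv(a\nb\Pi)=\divv\bigl((v\cdot\nb)u-\nu a\Delta u\bigr),
\]
for which the high-regularity elliptic theory (Proposition \ref{elliptichigh}) provides a solution $\nb\Pi$ together with an $\hs$ bound whose multiplicative constant is controlled by $\A_T^\lambda$.

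\medskip

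For the velocity estimate, I would apply the dyadic block $\Delta_j$ to the momentum equation and take the $L^2$ inner product with $\Delta_j u$. After integration by parts, the diffusion term produces $\nu\int a\,|\nb\Delta_ju|^2\,\dd x$ plus an error involving $\nb a$, the principal part of which is bounded below by $\nu a_*2^{2j}\|\Delta_ju\|_{L^2}^2$ via Bernstein and accounts for the parabolic gain. The transport term contributes the standard commutator $[\Delta_j,v\cdot\nb]u$, and the pressure and variable-coefficient pieces generate commutators of $\Delta_j$ with $a$, handled by Bony's paraproduct decomposition. After multiplying by $2^{js}$, taking $\ell^2$-norms in $j$, invoking a frequency-localised maximal regularity argument in the spirit of Danchin to recover the $\lutt\hspd$ norm on the left-hand side, and closing with Grönwall, one arrives at the exponential factor $e^{C\A_T^\lambda\|v\|_{\lut\hs}}$ together with the absorbable residual $\nu T\A_T^\lambda\|u\|_{\litt\hs}$ on the right.

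\medskip

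For the pressure estimate, I would apply the elliptic bound pointwise in time and then integrate over $[0,T]$. The transport contribution yields $T\|v\|_{\litt\hs}\|u\|_{\litt\hs}$ directly by Hölder. The diffusion contribution $\nu a\Delta u$ is more subtle and requires interpolation in time between $\litt\hs$ and $\lutt\hspd$: the low-frequency part of $u$ produces the $\nu(T^{1/2}+T)\|u\|_{\litt\hs}$ piece, while at high frequencies the interpolation exponents must be tuned so that Hölder yields a $T^{1/6}$ factor in front of $\nu\|u\|_{\lutt\hspd}$. The main obstacle I anticipate is precisely this bookkeeping of the time exponents: one must ensure that the specific powers $T$, $T^{1/2}$, $T^{1/6}$ appear in such a way that, when the pressure estimate is later re-injected into the velocity estimate (as is crucially exploited in Section \ref{sec:reg}), every high-regularity norm of $u$ is either multiplied by $\nu$ together with a positive power of $T$, allowing absorption on a short time interval, or dominated by the parabolic smoothing $\nu\|u\|_{\lutt\hspd}$ on the left-hand side.
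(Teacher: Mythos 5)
Your proposal follows essentially the same route as the paper: localise with $\Delta_j$, perform an $L^2$ energy estimate extracting the parabolic gain $\nu a_*2^{2j}\|\Delta_ju\|_{L^2}^2$ via Bernstein (with the low-frequency block treated separately), control the commutators $[\Delta_j,v\cdot\nb]u$ and $[\Delta_j,a]\Delta u$ by paradifferential estimates, bound $\nb\Pi$ through the elliptic equation $-\divv(a\nb\Pi)=\divv\big((v\cdot\nb)u-\nu a\Delta u\big)$ with Proposition \ref{elliptichigh}, and close by time interpolation between $\litt\hs$ and $\lutt\hspd$, Young's inequality and Grönwall, citing \cite{danchin2006} for existence and uniqueness. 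The only minor inaccuracy is the provenance of the powers of $T$ in the pressure bound: in the paper both the $T^{1/2}$ and the $T^{1/6}$ arise simultaneously from a single Young inequality applied to the interpolated term $\nu T^{1/4}\|u\|_{\litt\hs}^{1/4}\|u\|_{\lutt\hspd}^{3/4}$, rather than from a low-/high-frequency split of $u$.
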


\medskip
Theorem \ref{thstokes} is needed at two levels of the proof of Theorem \ref{main}. Firstly, we use it in Subsection \ref{subsec:approx} to construct uniformly bounded approximate solutions to the regularised system \eqref{oddeps}. For this, we need the last term in the bracket in the estimate for $u$ to have a power of $T$ as a factor, in order to deduce a uniform bound from the energy inequality \eqref{eq:energ1}. We also use it to derive uniform bounds for the solutions to the regularised system \eqref{oddeps}. In this step, one additionally needs all the terms in the estimate for the pressure to have a power of $T$ as a factor, in order to perform a continuation argument from the energy inequality \eqref{eeps2} to gather a uniform time of existence for the family of regularised solutions. These are the reasons why we provide the modified estimates above. Finally, let us mention that the uniqueness statement plays no role in the proof of Theorem \ref{main}, as only the existence and estimates are needed in Section \ref{sec:reg}. 

\medskip
We are now ready to get into the proof. The existence and uniqueness statements being already proved in \cite[Proposition 3.4]{danchin2006}, we focus only on the proof of the estimates. As they cannot be deduced from those stated in \cite[Proposition 3.2]{danchin2006}, we need to prove them directly from system \eqref{stokes}.

\begin{proof}
To begin with, let us notice that, for all $j\ge-1$, we have 
\[
\begin{split}
    \del_j(a\del u) &= a\del\del_ju + [\del_j,a]\del u \\
    &= \divv(a\nb\del_ju) - (\nb a\cdot\nb)\del_ju + [\del_j,a]\del u.
\end{split}
\]
We can thus apply the operator $\del_j$ to the first equation in \eqref{stokes} to gather that 
\[
\dt\del_ju + (v\cdot\nb)\del_ju - \nu\,\divv(a\nb\del_ju) = -\del_j(a\nb\Pi) - \nu(\nb a\cdot\nb)\del_ju + C_j,
\]
where we have defined the commutator term 
\[
C_j := [v\cdot\nb,\del_j]u + \nu[\del_j,a]\del u.
\]
Multiplying this equation by $\del_ju$ and integrating by parts, we find that 
\[
\frac{1}{2}\ddt\|\del_ju\|_\ld^2 + \nu\|\nb\del_j u\|_\ld^2 \lesssim \|\del_ju\|_\ld \Big(\|\del_j(a\nb\Pi)\|_\ld + \nu\|\nb a\|_\li\|\nb\del_j u\|_\ld + \|C_j\|_\ld\Big),
\]
for an implicit constant depending only on $a_*$. From the Bernstein inequalities \eqref{Bernstein}, we deduce that, for all $j\ge-1$, we have 
\[
\frac{1}{2}\ddt\|\del_ju\|_\ld^2 + \nu\|\nb\del_j u\|_\ld^2 \lesssim \|\del_ju\|_\ld \Big(\|\del_j(a\nb\Pi)\|_\ld + \nu2^j\|\nb a\|_\li\|\del_j u\|_\ld + \|C_j\|_\ld\Big).
\]
For $j\ge0$, we gather from the second Bernstein inequality in \eqref{Bernstein} that 
\[
\ddt\|\del_ju\|_\ld + \nu2^{2j}\|\del_ju\|_\ld \lesssim \|\del_j(a\nb\Pi)\|_\ld + \nu2^j\|\nb a\|_\li\|\del_j u\|_\ld + \|C_j\|_\ld.
\]
As for $j=-1$, we have 
\[
\ddt\|\del_{-1}u\|_\ld + \nu2^{-2}\|\del_{-1}u\|_\ld \lesssim \nu2^{-2}\|\del_{-1}u\|_\ld + \|\del_{-1}(a\nb\Pi)\|_\ld + \nu2^{-1}\|\nb a\|_\li\|\del_{-1}u\|_\ld + \|C_{-1}\|_\ld.
\]
Integrating these last two inequalities on $[0,T]$, multiplying the resulting expressions by $2^{js}$ and performing an $\ell^2$ summation over $j\ge-1$, we obtain 
\begin{align}
    \label{stokesbound}
    \|u\|_{\litt\hs} + \nu\|u\|_{\lutt\hspd} \lesssim\; &\|u_0\|_\hs + \nu2^{-2}\|\del_{-1}u\|_{\lut\ld} + \|a\nb\Pi\|_{\lutt\hs} \\
    \nonumber&+ \nu\|\nb a\|_{\lit\li}\|u\|_{\lutt\hsp} + \left\|\left(2^{js}\|C_j\|_{\lut\ld}\right)_{j\ge-1}\right\|_{\ell^2}.
\end{align}
We now estimate the terms on the right-hand side. 

\medskip
First of all, using \eqref{deltcont} and \eqref{timespace1}, we have 
\begin{equation}
    \label{stokesbound1}
    \|\del_{-1}u\|_{\lut\ld} \le T\|u\|_{\litt\hs}.
\end{equation}
Next, using the interpolation inequality \eqref{interpolation}, it follows that 
\begin{equation}
    \label{stokesbound2}
    \nu\|\nb a\|_{\lit\li}\|u\|_{\lutt\hsp} \le \nu T^{1/2}\A_T\|u\|_{\litt\hs}^{1/2} \|u\|_{\lutt\hspd}^{1/2}.
\end{equation}
Let us now estimate the commutator term $C_j$. Using the first item of Proposition \ref{commut}, we infer that 
\[
\Big\|\Big(2^{js}\big\|[\del_j,a]\del u\big\|_{\lut\ld}\Big)_{j\ge-1}\Big\|_{\ell^2} \lesssim \|\nb a\|_{\litt\hs}\|u\|_{\lutt H^{s+3/2}} \lesssim \|\nb a\|_{\litt\hs}T^{1/4}\|u\|_{\litt\hs}^{1/4}\|u\|_{\lutt\hspd}^{3/4}.
\]
From the second item of Proposition \ref{commut}, we have 
\[
\Big\|\Big(2^{js}\big\|[v\cdot\nb,\del_j]u\big\|_{\lut\ld}\Big)_{j\ge-1}\Big\|_{\ell^2} \lesssim \int_0^T \|\nb v(t)\|_\hsm \|u\|_{\widetilde{L_t^\infty}\hs} \dd t.
\]
In view of these two estimates, we obtain 
\begin{equation}
    \label{stokesbound3}
    \Big\|\Big(2^{js}\|C_j\|_{\lut\ld}\Big)_{j\ge-1}\Big\|_{\ell^2} \lesssim \int_0^T \|\nb v(t)\|_\hsm \|u\|_{\widetilde{L_t^\infty}\hs} \dd t + \nu T^{1/4}\A_T\|u\|_{\litt\hs}^{1/4}\|u\|_{\lutt\hspd}^{3/4}.
\end{equation}
We now bound the pressure gradient, which satisfies the elliptic equation
\[
-\divv(a\nb\Pi) = \divv(-\nu a\del u + (v\cdot\nb)u).
\]
From this equation and Proposition \ref{compo2}, we can apply Proposition \ref{elliptichigh} to gather the bound 
\begin{equation}
    \label{estnbPi}
    \|\nb\Pi\|_\hs \lesssim \nu\|\nb a\|_\hsm\|u\|_\hsp + \|v\|_\hs\|u\|_\hs + \big(1+\|\nb a\|_\hsm\big)^\gamma\big(\nu\|u\|_{H^2} + \|v\|_\hsm\|u\|_{H^1}\big)
\end{equation}
for some constant $\gamma=\gamma(s)>0$, where we have used the fact that $a\le a^*$. Integrating in time and using the bounds \eqref{timespace1}-\eqref{timespace2}, one gathers that 
\[
\begin{split}
    \|\nb\Pi\|_{\lutt\hs} \lesssim\; &\nu\A_T\|u\|_{\lutt H^{s+3/2}} + \int_0^T \|v(t)\|_\hs\|u\|_{\widetilde{L_t^\infty}\hs} \dd t \\
    &+ \A_T^\gamma\Big(\nu\|u\|_{\lutt\hs} + \int_0^T \|v(t)\|_\hs\|u\|_{\widetilde{L_t^\infty}\hs} \dd t\Big),
\end{split}
\]
where $\A_T$ is defined by \eqref{at}. We deduce that
\begin{equation}
    \label{stokesPi}
    \|\nb\Pi\|_{\lutt\hs} \lesssim \A_T^{\gamma+1}\Big(\int_0^T \|v(t)\|_\hs\|u\|_{\widetilde{L_t^\infty}\hs} \dd t + \nu T\|u\|_{\litt\hs} + \nu T^{1/4}\|u\|_{\litt\hs}^{1/4}\|u\|_{\lutt\hspd}^{3/4}\Big),
\end{equation}
which in turn implies from \eqref{eq:tame} that 
\begin{equation}
    \label{stokesbound4}
    \|a\nb\Pi\|_{\lutt\hs} \lesssim \A_T^{\gamma+2}\Big(\int_0^T \|v(t)\|_\hs\|u\|_{\widetilde{L_t^\infty}\hs} \dd t + \nu T\|u\|_{\litt\hs} + \nu T^{1/4}\|u\|_{\litt\hs}^{1/4}\|u\|_{\lutt\hspd}^{3/4}\Big).
\end{equation}
Plugging estimates \eqref{stokesbound1}, \eqref{stokesbound2}, \eqref{stokesbound3} and \eqref{stokesbound4} into \eqref{stokesbound}, using the Young inequality in the last term to absorb $\|u\|_{\lutt\hspd}$ on the left-hand side, and finally applying the Grönwall lemma, we find the desired estimate for $u$.

\medskip
Finally, from \eqref{stokesPi}, we deduce that
\[
\|\nb\Pi\|_{\lutt\hs} \lesssim \A_T^{\gamma+1}\left(T\|v\|_{\litt\hs}\|u\|_{\litt\hs} + \nu T\|u\|_{\litt\hs} + \nu T^{1/4}\|u\|_{\litt\hs}^{1/4}\|u\|_{\lutt\hspd}^{3/4}\right).
\]
Using the Young inequality in the last term then yields the claimed estimate for the pressure.
\end{proof}

\section{Elements of Littlewood-Paley theory}
\label{sec:tools}

In this appendix, we collect all the results from Littlewood-Paley theory needed for our study. The majority of the results presented here are borrowed from \cite[Chapters 2 and 3]{bcd}. As the dimension plays no role, we will work in $\R^d$, with $d\ge1$.

\subsection{Littlewood-Paley decomposition and Besov spaces}

Proposition 2.10 from \cite{bcd} provides us with smooth radial functions $\chi$ and $\phii$, valued in the interval $[0,1]$, supported respectively\footnote{For $0<r<R$, we denote the ball $B(0,R) := \{\xi\in\R^d : |\xi|\le R\}$ and the annulus $\cont(r,R) := \{\xi\in\R^d : r\le|\xi|\le R\}$.} in $B:=B(0,4/3)$ and $\cont:=\cont(3/4,8/3)$, such that 
\[
\chi(\xi) + \sum_{j\ge0}\phii(2^{-j}\xi) = 1,\qquad\forall\,\xi\in\R^d.
\]
We then define the dyadic blocks\footnote{We denote by $f(D)$ the pseudo-differential operator defined, for any $u\in\sch'(\R^d)$ and all $\xi\in\R^d$, by $\widehat{f(D)u}(\xi) = f(\xi)\hat{u}(\xi)$.} $(\del_j)_{j\in\Z}$ by 
\[
\del_j := 0 \qquad\forall\,j\le-2,\qquad \del_{-1} := \chi(D),\qquad \del_j := \phii(2^{-j}D) \qquad\forall\,j\ge0,
\]
so that, for any $u\in\sch'(\R^d)$, we have the so-called \textit{Littlewood-Paley decomposition}
\[
u = \sum_{j\in\Z}\del_ju.
\]
We also define the low frequency cut-off operators $(S_j)_{j\in\Z}$ by 
\[
S_j := 0 \qquad\forall\,j\le-1,\qquad S_j := \chi(2^{-j}D) = \sum_{k\le j-1}\del_k \qquad\forall\,j\ge0.
\]
We now state some localisation properties for these operators. Since  
\[
\supp\phii(2^{-j}\cdot) \subset 2^j\cont\qquad\mathrm{and}\qquad\supp\chi(2^{-j}\cdot) \subset 2^jB,
\]
we have, for any $u,v\in\sch'(\R^d)$,
\begin{equation}
    \label{supp}
    \supp\widehat{\del_ju} \subset 2^j\cont,\qquad\supp\widehat{S_ju} \subset 2^jB,\qquad \supp\widehat{S_{j-1}u\del_jv} \subset 2^j\widetilde\cont,
\end{equation}
where we have defined $\widetilde\cont:=\cont(1/12,10/3)$. Since 
\[
2^j\cont\cap 2^k\cont=\emptyset\quad \forall\,|j-k|\ge2\qquad\mathrm{and}\qquad2^j\cont\cap 2^k\widetilde\cont=\emptyset\quad \forall\,|j-k|\ge5,
\]
we deduce that 
\begin{equation}
    \label{deltloc}
    \del_j\del_ku=0\quad\forall\,|j-k|\ge2,\qquad\del_j(S_{k-1}u\del_kv)=0\quad\forall\,|j-k|\ge5.
\end{equation}
As the operators $\del_j$ and $S_j$ are convolution operators, they map continuously $L^p(\R^d)$ into itself, for all $1\le p\le\infty$. Moreover, their norms are independent of $j$: there exists a constant $C>0$ such that for all $j\in\Z$,  
\begin{equation}
    \label{deltcont}
    \|\del_ju\|_{L^p} \le C\|u\|_{L^p}\qquad\mathrm{and}\qquad\|S_ju\|_{L^p} \le C\|u\|_{L^p}.
\end{equation}

We finally introduce the so-called \textit{Bernstein inequalities}.

\begin{lemma} {\bf (Bernstein inequalities).}
    Let $0<r<R$. There exists a constant $C>0$ such that, for any $k\ge0$, $1\le p\le q\le\infty$ and $u\in L^p(\R^d)$, we have, for all $\lambda>0$, 
    \[
    \begin{split}
        \supp\hat{u}\subset B(0,\lambda R)\qquad &\impl\qquad \|\nb^ku\|_{L^q} \le C^{k+1}\lambda^{k+d\left(\frac{1}{p}-\frac{1}{q}\right)}\|u\|_{L^p}, \\
        \supp\hat{u}\subset \cont(\lambda r,\lambda R)\qquad &\impl\qquad C^{-(k+1)}\lambda^k\|u\|_{L^p} \le \|\nb^ku\|_{L^p} \le C^{k+1}\lambda^k\|u\|_{L^p}.
    \end{split}
    \]
\end{lemma}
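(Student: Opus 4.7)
The plan is to reduce both inequalities to the case $\lambda = 1$ by a dilation argument, then prove them by convolution with explicit Schwartz bumps built from the characteristic dyadic geometry.

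\medskip
\textbf{Reduction to $\lambda=1$.} For $u$ with $\supp\hat u\subset B(0,\lambda R)$ (resp.\ $\cont(\lambda r,\lambda R)$), I set $u_\lambda(x):=u(x/\lambda)$; then $\widehat{u_\lambda}(\xi)=\lambda^d\hat u(\lambda\xi)$, so $\supp\widehat{u_\lambda}\subset B(0,R)$ (resp.\ $\cont(r,R)$). The identities $\|u_\lambda\|_{L^p}=\lambda^{d/p}\|u\|_{L^p}$ and $\|\nb^ku_\lambda\|_{L^q}=\lambda^{-k+d/q}\|\nb^ku\|_{L^q}$ convert any inequality at $\lambda=1$ into the correctly scaled inequality with the exponents $k+d(1/p-1/q)$ and $k$ prescribed in the statement. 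It therefore suffices to prove the two inequalities when the support of $\hat u$ lies in $B(0,R)$ or $\cont(r,R)$.

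\medskip
\textbf{Upper bound.} Fix $\chi\in\sch(\R^d)$ with $\hat\chi\equiv 1$ on $B(0,R)$ and $\hat\chi$ supported in a slightly larger ball. When $\supp\hat u\subset B(0,R)$ one has $u=\chi*u$, hence $\partial^\alpha u=(\partial^\alpha\chi)*u$ for each multi-index $\alpha$ with $|\alpha|=k$. Young's convolution inequality with $1+1/q=1/p+1/r$ yields
\[
\|\partial^\alpha u\|_{L^q}\le\|\partial^\alpha\chi\|_{L^r}\,\|u\|_{L^p}.
\]
The delicate point is to control $\|\partial^\alpha\chi\|_{L^r}$ uniformly by $C^{k+1}$. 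Since $\widehat{\partial^\alpha\chi}(\xi)=(i\xi)^\alpha\hat\chi(\xi)$ is smooth and supported in a fixed compact set, Fourier inversion followed by $N$ integrations by parts (with $2N>d/r$) gives the pointwise identity
\[
(1+|x|^2)^N\partial^\alpha\chi(x)=\int e^{ix\cdot\xi}(1-\Delta_\xi)^N\!\bigl((i\xi)^\alpha\hat\chi(\xi)\bigr)\,\dd\xi,
\]
and the integrand is bounded by $C_NR^k$ times factors polynomial in $k$ coming from differentiating $\xi^\alpha$, which are absorbed into $C^{k+1}$ after taking the $L^r$ norm and summing over $|\alpha|=k$.

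\medskip
\textbf{Lower bound.} Now assume $\supp\hat u\subset\cont(r,R)$ and pick $\psi\in\cont_c^\infty(\R^d)$ supported in $\cont(r/2,2R)$ with $\psi\equiv 1$ on $\cont(r,R)$; then $\hat u=\psi\hat u$. The multinomial identity $|\xi|^{2k}=\sum_{|\alpha|=k}\binom{k}{\alpha}\xi^{2\alpha}$ together with $\xi^\alpha\hat u=(-i)^k\widehat{\partial^\alpha u}$ provides the representation
\[
u=\sum_{|\alpha|=k}c_\alpha\,T_\alpha(\partial^\alpha u),\qquad T_\alpha:=m_\alpha(D),\quad m_\alpha(\xi):=\frac{\psi(\xi)\,\xi^\alpha}{|\xi|^{2k}},
\]
with combinatorial coefficients $|c_\alpha|\le\binom{k}{\alpha}$. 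Since $|\xi|\ge r/2$ on $\supp\psi$, each $m_\alpha$ is smooth and compactly supported, so $\check m_\alpha\in\sch(\R^d)$ and $T_\alpha$ is convolution with an $L^1$ function, bounded on every $L^p$ uniformly. Estimating the Schwartz seminorms of $m_\alpha$ by the same integration-by-parts argument as above and summing over $|\alpha|=k$ yields the desired $\|u\|_{L^p}\le C^{k+1}\|\nb^k u\|_{L^p}$; the matching upper bound in $L^p$ is a direct application of the first inequality with $q=p$, since $\cont(r,R)\subset B(0,R)$.

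\medskip
\textbf{Main obstacle.} The structure of the proof is entirely standard; the only delicate point is quantitative bookkeeping of the $C^{k+1}$ constant. Both halves require controlling Schwartz seminorms of $\partial^\alpha\chi$ or of $m_\alpha$ uniformly as $|\alpha|=k$ grows, which forces careful tracking of how many of the $(1-\Delta_\xi)^N$ derivatives fall on the monomial factor $\xi^\alpha$ and on the negative power $|\xi|^{-2k}$ appearing on the annulus. All the resulting factorial and polynomial-in-$k$ contributions must be bundled into a single geometric constant $C=C(d,r,R)$ to close the estimate.
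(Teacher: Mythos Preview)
The paper does not prove this lemma: it is stated in the appendix as a quoted result, with the surrounding text indicating that the material is ``borrowed from \cite[Chapters 2 and 3]{bcd}''. Your argument is precisely the standard proof found there (Lemma 2.1 of \cite{bcd}): dilation to reduce to $\lambda=1$, convolution with a Schwartz bump equal to $1$ on the ball for the upper bound via Young, and on the annulus the representation $u=\sum_{|\alpha|=k}c_\alpha\,m_\alpha(D)\partial^\alpha u$ with $m_\alpha(\xi)=\psi(\xi)\xi^\alpha/|\xi|^{2k}$ for the lower bound. Your identification of the only nontrivial point---tracking the $C^{k+1}$ growth through the $(1-\Delta_\xi)^N$ integrations by parts and the combinatorics $\sum_{|\alpha|=k}\binom{k}{\alpha}=d^k$---is accurate, and the sketch you give for absorbing the polynomial-in-$k$ factors into a geometric constant is correct. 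There is nothing to compare: your proof is the reference proof the paper defers to.
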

In particular, in view of \eqref{supp}, for any $1\le p\le q\le\infty$ and any $u\in L^p$, we have
\begin{equation}
    \label{Bernstein}
    \|\nb^k\del_{-1}u\|_{L^q}\lesssim \|\del_{-1}u\|_{L^p}\qquad\mathrm{and}\qquad\|\nb^k\del_ju\|_{L^p} \approx 2^{jk}\|\del_ju\|_{L^p},\qquad\forall\,j,k\ge0.
\end{equation}

\paragraph{Besov spaces.} Let $s\in\R$ and $1\le p,r\le\infty$. The Besov space $B_{p,r}^s(\R^d)$ is defined as the subset of tempered distributions $u\in\sch'(\R^d)$ such that 
\[
\|u\|_{B_{p,r}^s} := \left\|\left(2^{js}\|\del_ju\|_{L^p}\right)_{j\in\Z}\right\|_{\ell^r} < \infty.
\]

As a fundamental consequence of the Bernstein inequalities, we have the following embeddings between Besov spaces.
\begin{lemma}
\label{embBesov}
    Let $s_1,s_2\in\R$ and $1\le p_1,r_1,p_2,r_2\le\infty$. We have the continuous embedding $B_{p_1,r_1}^{s_1}\into B_{p_2,r_2}^{s_2}$ for all indices satisfying $p_1\le p_2$ and 
    \[
    s_1-\frac{d}{p_1}>s_2-\frac{d}{p_2},\qquad \text{or}\qquad s_1-\frac{d}{p_1}=s_2-\frac{d}{p_2}\quad\text{and}\quad r_1\le r_2.
    \]
\end{lemma}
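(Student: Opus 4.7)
The strategy is a three-step reduction from the function-level statement to a sequence-level inequality, driven entirely by the first Bernstein inequality. First, for each $j\ge-1$, the spectrum $\supp\widehat{\del_j u}$ lies in a ball of radius $\lesssim 2^j$, and since $p_1\le p_2$, the first Bernstein inequality yields
\[
\|\del_j u\|_{L^{p_2}} \lesssim 2^{jd(1/p_1-1/p_2)}\,\|\del_j u\|_{L^{p_1}}
\]
uniformly in $j$ (the block $j=-1$ contributing only a harmless multiplicative constant, as its spectrum is bounded by a fixed ball).

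Next, I would multiply by $2^{js_2}$ and set $\alpha := (s_1 - d/p_1) - (s_2 - d/p_2) \ge 0$ together with $a_j := 2^{js_1}\|\del_j u\|_{L^{p_1}}$, so that $\|(a_j)_j\|_{\ell^{r_1}} = \|u\|_{B_{p_1,r_1}^{s_1}}$. The previous bound rewrites as
\[
2^{js_2}\|\del_j u\|_{L^{p_2}} \lesssim 2^{-j\alpha}\,a_j, \qquad j\ge-1.
\]
Taking $\ell^{r_2}$ norms in $j$, it then remains to control the weighted sequence $(2^{-j\alpha} a_j)_j$ in $\ell^{r_2}$ by $(a_j)_j$ in $\ell^{r_1}$.

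I would split according to the two hypotheses. In the equality case $\alpha=0$ with $r_1\le r_2$, the weight disappears and the standard embedding $\ell^{r_1}\into\ell^{r_2}$ closes the argument. In the strict inequality case $\alpha>0$: if $r_1\le r_2$, the embedding $\ell^{r_1}\into\ell^{r_2}$ together with the boundedness of $(2^{-j\alpha})_j$ suffices; if instead $r_1>r_2$, Hölder's inequality on sequences with exponent $q$ defined by $1/r_2 = 1/r_1 + 1/q$ yields
\[
\|(2^{-j\alpha} a_j)_j\|_{\ell^{r_2}} \le \|(2^{-j\alpha})_j\|_{\ell^q}\,\|(a_j)_j\|_{\ell^{r_1}},
\]
and $(2^{-j\alpha})_j\in\ell^q$ thanks to $\alpha>0$. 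There is no genuine obstacle here; the only conceptual point worth underlining is that the hypothesis $r_1\le r_2$ becomes strictly necessary precisely in the borderline case $\alpha=0$, where no geometric decay is available to create slack and the argument reduces to the monotonicity of $\ell^r$ norms.
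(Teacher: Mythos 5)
Your proof is correct and follows exactly the route the paper indicates: the lemma is stated as a direct consequence of the first Bernstein inequality (the paper itself gives no proof, citing \cite{bcd}), and your reduction to the weighted sequence inequality, handled by $\ell^{r_1}\into\ell^{r_2}$ in the borderline case $\alpha=0$ and by the geometric decay of $(2^{-j\alpha})_{j\ge-1}$ (via H\"older when $r_1>r_2$) in the strict case, is the standard argument.
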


Let us now recall some relations between Besov and Sobolev spaces. For all $k\ge0$ and $1\le p\le\infty$, we have the chain of continuous embeddings 
\[
B_{p,1}^k\into W^{k,p}\into B_{p,\infty}^k,\qquad \forall\,k\ge0,\qquad \forall\,1\le p\le\infty.
\]
In particular, $B_{\infty,1}^0\into\li\into B_{\infty,\infty}^0$. When $1<p<\infty$, we have the refined embeddings
\[
B_{p,\min(p,2)}^k\into W^{k,p}\into B_{p,\max(p,2)}^k,\qquad \forall\,k\ge0,\qquad \forall\,1<p<\infty,
\]
and for all $s\in\R$, we have $\hs \equiv B_{2,2}^s$, with equivalence of norms: for all $u\in\hs$, 
\[
\|u\|_\hs \approx \Big(\sum_{j\in\Z}2^{2js}\|\del_ju\|_\ld^2\Big)^{1/2}.
\]

As an important consequence of these relations, we have 
\[
\hs\equiv B_{2,2}^s\into B_{\infty,1}^0\into\li,\qquad\forall\,s>\frac{d}{2}.
\]

\subsection{Time-space Besov spaces and paradifferential calculus}
\label{sec:timespace}

Let $T>0$. Let $s\in\R$ and $1\le p,q,r\le\infty$. We introduce the time-space Besov space $\widetilde{L_T^q}B_{p,r}^s(\R^d)$ (after J.-Y. Chemin and N. Lerner, see \cite{cheminlerner}), defined as the subset of tempered distributions $u\in\sch'([0,T]\times\R^d)$ such that 
\[
\|u\|_{\widetilde{L_T^q}B_{p,r}^s} := \left\|\left(2^{js}\|\del_ju\|_{L_T^qL^p}\right)_{j\in\Z}\right\|_{\ell^r} < \infty.
\]
We also denote $\ctt B_{p,r}^s := \litt B_{p,r}^s\cap\ct B_{p,r}^s$.

\medskip
The use of these spaces is crucial for the proof of Theorem \ref{thstokes}. It comes from the fact that, when establishing a priori estimates for evolution equations, one has to apply first the frequency localisation operators $\del_j$, and perform an energy estimate. This provides one with $L^p$-estimates for each dyadic block $\del_ju$. In particular, one has to estimate a term of the form $\ddt\|\del_ju\|_{L^p}$. Therefore, we have to integrate in time \emph{before} performing the $\ell^r$-summation. This is the reason why the time integration and the $\ell^r$-summation are swapped in the definition of $\widetilde{L_T^q}B_{p,r}^s$, when compared to the usual spaces $L_T^qB_{p,r}^s$.

\medskip
We now make the link with the standard Besov spaces. From the Minkowski inequality, we have the continuous embedding $L_T^qB_{p,r}^s\into\widetilde{L_T^q}B_{p,r}^s$ if $q\le r$, and $\widetilde{L_T^q}B_{p,r}^s\into L_T^qB_{p,r}^s$ if $q\ge r$, with
\begin{equation}
    \label{timespace}
    \|u\|_{\widetilde{L_T^q}B_{p,r}^s} \le \|u\|_{L_T^qB_{p,r}^s}\qquad\mathrm{if}\;q\le r,\qquad\mathrm{and}\qquad \|u\|_{L_T^qB_{p,r}^s} \le \|u\|_{\widetilde{L_T^q}B_{p,r}^s}\qquad\mathrm{if}\;q\ge r.
\end{equation} 
As an immediate consequence, we have $\widetilde{L_T^q}B_{p,r}^s=L_T^qB_{p,r}^s$ whenever $q=r$, and in particular $\widetilde{L_T^2}\hs=L_T^2\hs$. We also have the continuous embeddings $\lut\hs\into\lutt\hs$ and $\litt\hs\into\lit\hs$, with
\begin{equation}
    \label{timespace1}
    \|u\|_{\lutt\hs} \le \|u\|_{\lut\hs}\qquad\mathrm{and}\qquad \|u\|_{\lit\hs} \le \|u\|_{\litt\hs},
\end{equation}
From Lemma \ref{embBesov} and \eqref{timespace}, we also have, for all $\delta>0$, the chain of continuous embeddings $\lutt H^{s+\delta}\into\lutt B_{2,1}^s=\lut B_{2,1}^s\into\lut\hs$, and 
\begin{equation}
    \label{timespace2}
    \|u\|_{\lut\hs} \lesssim \|u\|_{\lutt H^{s+\delta}}.
\end{equation}

Let us now state the interpolation inequality for time-space Besov spaces: for $s_1,s_2\in\R$, $1\le q_1,q_2\le\infty$ and $\theta\in[0,1]$, we have 
\begin{equation}
    \label{interpolation}
    \|u\|_{\widetilde{L_T^q}H^s} \le \|u\|_{\widetilde{L_T^{q_1}}H^{s_1}}^\theta \|u\|_{\widetilde{L_T^{q_2}}H^{s_2}}^{1-\theta}\qquad\mathrm{whenever}\qquad \frac{\theta}{q_1}+\frac{1-\theta}{q_2}=\frac{1}{q}\quad\mathrm{and}\quad\theta s_1+(1-\theta)s_2=s.
\end{equation}

We now present the so-called Fatou property of time-space Besov spaces. This result can be obtained with slight modifications in the proof of \cite[Theorem 2.25]{bcd}.

\begin{theorem}
    \label{th:fatou}
    Let $T>0$. Let $s\in\R$ and $1\le p,q,r\le\infty$. Let $(u_n)_{n\ge0}$ be a bounded sequence in $\widetilde{L_T^q}B_{p,r}^s$. There exists $u\in\widetilde{L_T^q}B_{p,r}^s$ and an extraction $\phii$ such that 
    \[
    u_{\phii(n)}\to u\qquad\mathrm{in}\;\sch'([0,T]\times\R^d)\qquad\text{and}\qquad\|u\|_{\widetilde{L_T^q}B_{p,r}^s}\lesssim\liminf_{n\to+\infty}\|u_{\phii(n)}\|_{\widetilde{L_T^q}B_{p,r}^s}.
    \]
\end{theorem}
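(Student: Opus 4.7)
My plan is to mimic the classical proof of \cite[Theorem 2.25]{bcd} for standard Besov spaces, adapting it to the time-space setting by handling the $L_T^q$ integrability alongside the spatial $L^p$ one. The strategy rests on three pillars: a frequency-localised weak compactness extraction, a diagonal argument across dyadic blocks, and lower semicontinuity of norms under weak limits.

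First, I would fix $j\in\Z$ and observe that, by the very definition of $\widetilde{L_T^q}B_{p,r}^s$, the sequence $(\del_j u_n)_{n\ge0}$ is bounded in $L_T^q L^p(\R^d)$ by $2^{-js}\sup_n\|u_n\|_{\widetilde{L_T^q}B_{p,r}^s}$. Since $\del_j u_n$ has its Fourier support in a fixed compact annulus (or ball, for $j=-1$) of size $\sim 2^j$, the Bernstein inequalities let us embed this sequence into $L_T^q L^{\tilde p}(\R^d)$ for any $\tilde p\in[p,\infty]$ with a loss controlled by $j$ alone. In particular, for any fixed $j$, I can view $(\del_j u_n)_n$ as a bounded sequence in a reflexive space (say $L^{\tilde q}_T L^{\tilde p}_\mathrm{loc}$ with $\tilde q,\tilde p\in(1,\infty)$), and extract a weakly convergent subsequence with limit $v_j$. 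A classical Cantor diagonal argument then yields an extraction $\phii$ such that, simultaneously for all $j\in\Z$, $\del_{j}u_{\phii(n)}\weak v_j$ in $\sch'([0,T]\times\R^d)$. Moreover, because each $\del_j$ is a Fourier multiplier by a smooth compactly supported symbol, passing to the distributional limit commutes with $\del_j$: the candidate limit $u:=\sum_{j\ge-1}v_j$ (convergent in $\sch'$, as I will see below) satisfies $\del_j u=v_j$.

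The next step is to upgrade weak convergence of each $\del_j u_{\phii(n)}$ to the norm bound. By lower semicontinuity of the $L_T^q L^p$ norm under weak(-$*$) convergence, I get
\[
\|\del_j u\|_{L_T^q L^p}=\|v_j\|_{L_T^q L^p}\le\liminf_{n\to\infty}\|\del_j u_{\phii(n)}\|_{L_T^q L^p}
\]
for every $j\ge-1$. Multiplying by $2^{js}$ and applying Fatou's lemma to the counting measure on $\Z$ at the level of the $\ell^r$ norm yields
\[
\|u\|_{\widetilde{L_T^q}B_{p,r}^s}=\left\|\bigl(2^{js}\|v_j\|_{L_T^q L^p}\bigr)_j\right\|_{\ell^r}\le\liminf_{n\to\infty}\left\|\bigl(2^{js}\|\del_j u_{\phii(n)}\|_{L_T^q L^p}\bigr)_j\right\|_{\ell^r}=\liminf_{n\to\infty}\|u_{\phii(n)}\|_{\widetilde{L_T^q}B_{p,r}^s}.
\]
This finite bound retroactively justifies that the series $\sum_j v_j$ does converge in $\sch'$, because on any test function $\vphi\in\sch$ the dyadic tails are summable thanks to a mild loss in Bernstein inequalities traded against the smoothness of $\vphi$. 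Finally, $u_{\phii(n)}=\sum_j\del_j u_{\phii(n)}\to\sum_j v_j=u$ in $\sch'([0,T]\times\R^d)$ follows from the same argument applied termwise with a uniform tail estimate in $j$.

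The main obstacle I foresee is the handling of the limiting indices $p,q,r\in\{1,\infty\}$, where $L_T^q L^p$ is not reflexive. For $q=\infty$ or $p=\infty$ one has to work with weak-$*$ convergence (against $L^1_T L^{p'}$ or $L^{q'}_T L^1$), while for $q=1$ or $p=1$ one exploits the compact spectrum of $\del_j u_n$ via Bernstein to extract in a larger $L^{\tilde q}_T L^{\tilde p}$ with $\tilde q,\tilde p\in(1,\infty)$, at the cost of a $j$-dependent but finite factor that does not affect the extraction. The case $r=\infty$ is automatic since Fatou in $\ell^\infty$ is just the $\liminf$ of a sup. Once this case analysis is done, the rest is bookkeeping, and the modifications to \cite[Theorem 2.25]{bcd} truly are cosmetic, the only novelty being that $L^p$ is replaced by $L_T^q L^p$ throughout.
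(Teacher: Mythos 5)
Your architecture — blockwise weak(-$*$) extraction, Cantor diagonalisation over $j$, lower semicontinuity of $\|\cdot\|_{L_T^qL^p}$ under distributional convergence, and Fatou's lemma for the $\ell^r$ sum — is exactly the intended route: the paper gives no proof and simply refers to \cite[Theorem 2.25]{bcd}, of which this is the natural time-space adaptation. The non-reflexive cases $p=\infty$ and $q=\infty$ are correctly handled by weak-$*$ compactness in the relevant dual spaces, and the mild circularity around ``$\del_j u=v_j$'' is acceptably resolved once $u_{\phii(n)}\to u$ in $\sch'$ is established via the uniform tail estimates you describe.

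There is, however, one step that fails as written: the claim that for $q=1$ one can use Bernstein to extract in $L_T^{\tilde q}L^{\tilde p}$ with $\tilde q>1$. The spectral localisation of $\del_ju_n$ is purely \emph{spatial}; Bernstein improves the exponent $\tilde p$ but gives no control whatsoever on the time integrability, so a sequence bounded in $L_T^1L^{\tilde p}$ remains merely bounded in $L_T^1L^{\tilde p}$, which is neither reflexive nor a dual space. Bounded sequences there can concentrate in time (e.g.\ $n\mathds{1}_{[0,1/n]}(t)g(x)$), so the weak-$*$ limit of $\del_ju_{\phii(n)}$ must a priori be sought in the space of $L^{\tilde p}$-valued measures on $[0,T]$, and it need not be an $L^1$-in-time function; this also shows that for $q=1$ the conclusion ``$u\in\lutt B_{p,r}^s$'' requires either interpreting the distributional convergence on the open time interval or identifying the limit as a function by other means. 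This endpoint is not cosmetic here, since the paper applies the theorem precisely to bound limits in $\lutt\hspd$; in that application the limit $u$ is already known to be a function (from the strong $\ct\hsig(B_R)$ convergence obtained separately), and then the norm inequality follows from lower semicontinuity of $\|\cdot\|_{L_T^1L^p}$ under distributional convergence — written as a supremum of pairings against smooth test functions — without any need to improve $\tilde q$. You should either restrict the extraction argument to $q>1$ and treat $q=1$ by this duality/identification route, or carry out the extraction in the space of vector-valued measures and argue separately that the limit is absolutely continuous.
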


\paragraph{Elements of paradifferential calculus.} We introduce the paraproduct decomposition (after J.-M. Bony, see \cite{bony}) in the framework of time-space Besov spaces. The product of two tempered distributions $u,v\in\sch'([0,T]\times\R^d)$ can be decomposed as 
\begin{equation}
    \label{paraprod}
    uv = \T_uv + \T_vu + \RR(u,v),
\end{equation}
where 
\[
\T_uv(t) := \sum_{j\in\Z} S_{j-1}u(t)\,\del_jv(t)\qquad\mathrm{and}\qquad \RR(u,v)(t) := \sum_{j\in\Z} \sum_{|k-j|\le1} \del_ju(t)\,\del_kv(t)
\]
are called respectively the \textit{paraproduct} and \textit{remainder} of $u$ and $v$.

\medskip
Let us now recall some well-known continuity properties for these operators. The following result is adapted from \cite[Theorems 2.82 and 2.85]{bcd}.
\begin{proposition}
    \label{para2}
    Let $T>0$. Let $s\in\R$ and $1\le p,q,r\le\infty$. Let $1\le q_1,q_2\le\infty$ be such that 
    \[
    \frac{1}{q_1} + \frac{1}{q_2} = \frac{1}{q}.
    \]
    For any $\ell>0$, the paraproduct operator $\T$ maps continuously $L_T^{q_1}\li\times\widetilde{L_T^{q_2}}B_{p,r}^s$ and $L_T^{q_1}B_{\infty,\infty}^{-\ell}\times\widetilde{L_T^{q_2}}B_{p,r}^{s+\ell}$ into $\widetilde{L_T^q}B_{p,r}^s$, and we have, for all $k\ge0$,
    \[
    \|\T_uv\|_{\widetilde{L_T^q}B_{p,r}^s} \lesssim \|u\|_{L_T^{q_1}\li} \|\nb^kv\|_{\widetilde{L_T^{q_2}}B_{p,r}^{s-k}}\qquad\text{and}\qquad \|\T_uv\|_{\widetilde{L_T^q}B_{p,r}^s} \lesssim \|u\|_{L_T^{q_1}B_{\infty,\infty}^{-\ell}} \|\nb^kv\|_{\widetilde{L_T^{q_2}}B_{p,r}^{s-k+\ell}}.
    \]
    Let $s_1,s_2\in\R$ and $1\le p_1,p_2,r_1,r_2\le\infty$ be such that
    \[
    s_1+s_2=s,\qquad \frac{1}{p_1} + \frac{1}{p_2} = \frac{1}{p},\qquad \frac{1}{r_1} + \frac{1}{r_2} = \frac{1}{r}.
    \]
    If $s>0$, the remainder operator $\RR$ maps continuously $\widetilde{L_T^{q_1}}B_{p_1,r_1}^{s_1}\times\widetilde{L_T^{q_2}}B_{p_2,r_2}^{s_2}$ into $\widetilde{L_T^q}B_{p,r}^s$, and we have 
    \[
    \|\RR(u,v)\|_{\widetilde{L_T^q}B_{p,r}^s} \lesssim \|u\|_{\widetilde{L_T^{q_1}}B_{p_1,r_1}^{s_1}} \|v\|_{\widetilde{L_T^{q_2}}B_{p_2,r_2}^{s_2}}.
    \]
    If $s=0$ and $r=1$, the operator $\RR$ maps continuously $\widetilde{L_T^{q_1}}B_{p_1,r_1}^{s_1}\times\widetilde{L_T^{q_2}}B_{p_2,r_2}^{s_2}$ into $\widetilde{L_T^q}B_{p,\infty}^0$, and we have
    \[
    \|\RR(u,v)\|_{\widetilde{L_T^q}B_{p,\infty}^0} \lesssim \|u\|_{\widetilde{L_T^{q_1}}B_{p_1,r_1}^{s_1}} \|v\|_{\widetilde{L_T^{q_2}}B_{p_2,r_2}^{s_2}}.
    \]
\end{proposition}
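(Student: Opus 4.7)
The plan is to follow Bony's paradifferential strategy, carefully adapted to the Chemin--Lerner time-space norms, essentially transcribing the proofs of Theorems 2.82 and 2.85 of \cite{bcd} with the $L_T^q$ integration performed \emph{before} the $\ell^r$-summation.

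For the paraproduct $\T_u v$, the starting point is the frequency support observation \eqref{deltloc}, which ensures that $\del_q\bigl(S_{j-1} u\, \del_j v\bigr) = 0$ unless $|j-q| \le 4$; hence each dyadic block $\del_q \T_u v$ reduces to a finite sum of such terms. I would use the uniform $L^p$-boundedness \eqref{deltcont} of $\del_q$ together with Hölder in both time (with $1/q_1 + 1/q_2 = 1/q$) and space to obtain
\[
\|\del_q(S_{j-1} u\, \del_j v)\|_{L_T^q L^p} \lesssim \|u\|_{L_T^{q_1}\li}\, \|\del_j v\|_{L_T^{q_2} L^p}.
\]
Multiplying by $2^{qs}$, the constraint $|j-q|\le 4$ lets me replace $2^{qs}$ by $2^{js}$ up to a harmless constant, and an $\ell^r$-summation over $q$ yields the first paraproduct estimate. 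The variant with $\nb^k v$ follows from the Bernstein equivalence $\|\del_j v\|_{L^p} \approx 2^{-jk}\|\del_j \nb^k v\|_{L^p}$. For the negative-regularity version, the crucial geometric bound is
\[
\|S_{j-1} u\|_{L_T^{q_1}\li} \lesssim \sum_{k \le j-2} \|\del_k u\|_{L_T^{q_1}\li} \lesssim 2^{j\ell}\, \|u\|_{L_T^{q_1} B_{\infty,\infty}^{-\ell}},
\]
which converges thanks to $\ell > 0$. The resulting factor $2^{j\ell}$ shifts the regularity of $v$ from $s$ to $s+\ell$, and the same $\ell^r$-summation closes the second estimate.

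For the remainder, I would exploit the localisation $\supp\widehat{\del_j u\, \del_k v} \subset B(0, C\, 2^j)$ whenever $|j-k|\le 1$, which forces $\del_q(\del_j u\, \del_k v) = 0$ unless $j \ge q - N_0$ for a fixed $N_0$. Applying Hölder in time and in space (with $1/p_1 + 1/p_2 = 1/p$) gives
\[
2^{qs}\, \|\del_q(\del_j u\, \del_k v)\|_{L_T^q L^p} \lesssim 2^{(q-j)s}\, \bigl(2^{j s_1}\|\del_j u\|_{L_T^{q_1} L^{p_1}}\bigr)\, \bigl(2^{k s_2}\|\del_k v\|_{L_T^{q_2} L^{p_2}}\bigr),
\]
where $s = s_1 + s_2$ and $2^j \approx 2^k$ have been used. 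When $s > 0$, the weights $2^{(q-j)s}$ on the half-line $j \ge q - N_0$ are summable, so Young's convolution inequality on $\ell^r(\Z)$ with $1/r_1 + 1/r_2 = 1/r$ yields the first remainder estimate. In the endpoint case $s = 0$, $r = 1$, these weights are merely bounded, which is why one cannot close in $B_{p,1}^0$ and must instead take the supremum over $q$ to land in $B_{p,\infty}^0$; Hölder's inequality for sequences (with $1/r_1 + 1/r_2 = 1$) then gives the final bound.

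The only delicate point throughout is the Chemin--Lerner swap: since the $\ell^r$-summation in the definition of $\widetilde{L_T^q} B_{p,r}^s$ sits outside the time integral, all Hölder splittings must be performed at the level of individual dyadic blocks, the $L_T^q$ integration carried out there, and only then may one sum in $\ell^r$. This is the only genuine difference with the standard non-parametric proof in \cite{bcd}; no other real obstacle is expected.
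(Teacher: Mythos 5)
Your argument is correct and is precisely the adaptation of \cite[Theorems 2.82 and 2.85]{bcd} to the Chemin--Lerner framework that the paper invokes without proof (it only cites those results), namely: almost-orthogonality of the blocks, H\"older in time at the level of each dyadic block before the $\ell^r$-summation, and Young/H\"older for the resulting convolutions of sequences. The one micro-detail worth recording is that the two-sided Bernstein equivalence $\|\del_j v\|_{L^p}\approx 2^{-jk}\|\del_j\nb^k v\|_{L^p}$ you use for the $\nb^k v$ variants only holds for $j\ge0$, which is harmless here because $S_{j-1}\equiv0$ for $j\le0$, so the paraproduct involves no low-frequency block of $v$.
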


As an immediate and fundamental consequence of Proposition \ref{para2}, we have the so-called \textit{tame estimates}. 

\begin{corollary}
    \label{tameestimates}
    Let $T>0$. Let $s>0$ and $1\le p,q,r\le\infty$. Let $1\le q_1,q_2,q_3,q_4\le\infty$ be such that 
    \[
    \frac{1}{q_1} + \frac{1}{q_2} = \frac{1}{q_3} + \frac{1}{q_4} = \frac{1}{q}.
    \] 
    We have, for all $k\ge0$,
    \[
    \|uv\|_{\widetilde{L_T^q}B_{p,r}^s} \lesssim \|u\|_{L_T^{q_1}\li} \|v\|_{\widetilde{L_T^{q_2}}B_{p,r}^s} + \|\nb^ku\|_{\widetilde{L_T^{q_3}}B_{p,r}^{s-k}} \|v\|_{L_T^{q_4}\li}.
    \]
\end{corollary}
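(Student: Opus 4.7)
The plan is to apply Bony's paraproduct decomposition \eqref{paraprod}, $uv = \T_uv + \T_vu + \RR(u,v)$, and to bound each of the three pieces separately using Proposition \ref{para2}. The two paraproducts will match directly the two terms on the right-hand side of the claimed estimate, and the remainder will be absorbed into the first of them through the embedding $\li\hookrightarrow B_{\infty,\infty}^0$.

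First, for the paraproduct $\T_uv$, I apply the first continuity estimate of Proposition \ref{para2} with $k=0$ and indices $(q_1,q_2)$, which immediately yields
\[
\|\T_uv\|_{\widetilde{L_T^q}B_{p,r}^s} \lesssim \|u\|_{L_T^{q_1}\li}\,\|v\|_{\widetilde{L_T^{q_2}}B_{p,r}^s}.
\]
For the paraproduct $\T_vu$, I invoke the same estimate with the roles of $u$ and $v$ swapped, the indices taken to be $(q_4,q_3)$, and for the general $k\ge0$ stated in the corollary, which gives
\[
\|\T_vu\|_{\widetilde{L_T^q}B_{p,r}^s} \lesssim \|v\|_{L_T^{q_4}\li}\,\|\nb^ku\|_{\widetilde{L_T^{q_3}}B_{p,r}^{s-k}}.
\]

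Second, for the remainder $\RR(u,v)$, I use the assumption $s>0$ and apply the remainder estimate of Proposition \ref{para2} with the choice $s_1=0$, $s_2=s$, $p_1=\infty$, $p_2=p$, $r_1=\infty$, $r_2=r$ (all three compatibility conditions are satisfied) and Lebesgue exponents $(q_1,q_2)$. This produces
\[
\|\RR(u,v)\|_{\widetilde{L_T^q}B_{p,r}^s} \lesssim \|u\|_{\widetilde{L_T^{q_1}}B_{\infty,\infty}^0}\,\|v\|_{\widetilde{L_T^{q_2}}B_{p,r}^s}.
\]
It then remains to control $\|u\|_{\widetilde{L_T^{q_1}}B_{\infty,\infty}^0}$ by $\|u\|_{L_T^{q_1}\li}$. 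This is immediate from the uniform bound $\|\del_ju\|_\li\lesssim\|u\|_\li$ in \eqref{deltcont}, taking the $L_T^{q_1}$-norm in time and the $\ell^\infty$-norm in $j$; equivalently, it is the embedding $\li\hookrightarrow B_{\infty,\infty}^0$ combined with the inclusion $L_T^{q_1}B_{\infty,\infty}^0\hookrightarrow\widetilde{L_T^{q_1}}B_{\infty,\infty}^0$ from \eqref{timespace}, valid since $r=\infty\ge q_1$.

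Summing the three contributions and using \eqref{paraprod} delivers the tame estimate. There is no real obstacle: the corollary is a direct assembly of the paraproduct and remainder bounds from the preceding proposition, and the only point requiring any care is the embedding $\li\hookrightarrow B_{\infty,\infty}^0$, which lets the remainder term be absorbed into an $\li$-type norm on $u$ despite the fact that Proposition \ref{para2} states the remainder estimate only for Besov-regular inputs.
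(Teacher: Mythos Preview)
Your proof is correct and is exactly the argument the paper has in mind: the corollary is stated there as an ``immediate and fundamental consequence of Proposition \ref{para2}'' with no further details, and the natural reading is precisely the Bony decomposition \eqref{paraprod} followed by the paraproduct and remainder bounds you invoke. Your handling of the remainder via $s_1=0$, $s_2=s$, $r_1=\infty$, $r_2=r$ and the embedding $\li\hookrightarrow B_{\infty,\infty}^0$ is the standard way to absorb $\RR(u,v)$ into the first term, and all index compatibilities check out.
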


In particular, with the choices $p=r=2$, $q=q_2=q_4=1$, $q_1=q_3=\infty$, and $k=1$, we have 
\begin{equation}
    \label{eq:tame}
    \|uv\|_{\lutt\hs} \lesssim \|u\|_{\lit\li} \|v\|_{\lutt\hs} + \|\nb u\|_{\litt\hsm} \|v\|_{\lut\li}.
\end{equation}
If $s>d/2$, owing to embeddings \eqref{timespace1}-\eqref{timespace2}, we have, for any $\delta>0$ such that $s>d/2+\delta$, 
\[
\|u\|_{\lit\li} \lesssim \|u\|_{\lit\hs} \le \|u\|_{\litt\hs}\qquad\mathrm{and}\qquad \|v\|_{\lut\li} \lesssim \|v\|_{\lut H^{d/2+\delta}} \lesssim \|v\|_{\lutt\hs},
\]
so that 
\begin{equation}
    \label{eq:tame2}
    \|uv\|_{\lutt\hs} \lesssim \|u\|_{\litt\hs} \|v\|_{\lutt\hs}.
\end{equation}

We also recall the following commutator estimates, which are particular cases of \cite[Lemmas 8.7 and 8.11]{danchin2006}.

\begin{proposition}
\label{commut}
    Let $T>0$. Let $s>1+d/2$. Assume that $\divv\,v=0$. There exists a constant $C=C(d,s)>0$ such that 
    \[
    \Big\|\Big(2^{js}\big\|[\del_j,a]w\big\|_{\lut\ld}\Big)_{j\ge-1}\Big\|_{\ell^2} \le C\|\nb a\|_{\litt\hs}\|w\|_{\lutt H^{s-1/2}},
    \]
    and
    \[
    \Big\|\Big(2^{js}\big\|[v\cdot\nb,\del_j]w\big\|_{\lut\ld}\Big)_{j\ge-1}\Big\|_{\ell^2} \le C\int_0^T \|\nb v(t)\|_\hsm \|\nb w(t)\|_\hsm \dd t.
    \]
\end{proposition}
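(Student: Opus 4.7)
The plan is to derive both estimates from Bony's paraproduct decomposition, exploiting the fact that the convolution kernel of $\del_j$ provides a $2^{-j}$-smoothing when commuted against a smooth function. For the first estimate, I write $aw=\T_aw+\T_wa+\RR(a,w)$ and the analogous decomposition of $a\del_jw$ to obtain
\[
[\del_j,a]w \;=\; [\del_j,\T_a]w \;+\; \bigl(\del_j\T_wa - \T_{\del_jw}a\bigr) \;+\; \bigl(\del_j\RR(a,w) - \RR(a,\del_jw)\bigr).
\]
The main piece $[\del_j,\T_a]w=\sum_{|k-j|\le 4}[\del_j,S_{k-1}a]\del_kw$ (by \eqref{deltloc}) is treated via the kernel representation $[\del_j,S_{k-1}a]\del_kw(x)=\int h_j(x-y)\bigl(S_{k-1}a(y)-S_{k-1}a(x)\bigr)\del_kw(y)\dd y$, where $h_j(x)=2^{jd}h(2^jx)$. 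Taylor's expansion together with $\||\cdot|h_j\|_{L^1}\lesssim 2^{-j}$ yields pointwise in $t$ the bound $\|[\del_j,S_{k-1}a]\del_kw\|_\ld\lesssim 2^{-j}\|\nb a\|_\li\|\del_kw\|_\ld$. Taking $L^1_T L^2$-norms and $\ell^2$-summing against $2^{js}$ with $|k-j|\le4$ produces a main-piece bound of the form $\|\nb a\|_{\litt\hs}\|w\|_{\lutt\hsm}$ after using $\hsm\into\li$ (since $s>1+d/2$); this is already stronger than the $H^{s-1/2}$ target.

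For the remainder pieces I appeal to the derivative-trading continuity in Proposition \ref{para2}, namely
\[
\|\T_wa\|_{\lutt\hs}\lesssim \|w\|_{\lut B_{\infty,\infty}^{-1}}\|\nb a\|_{\litt\hs},
\]
combined with the chain of embeddings $\lutt H^{s-1/2}\into\lut L^2 \into\lut B_{\infty,\infty}^{-1}$ (the last by Bernstein in dimension $d\le 2$), which produces precisely the $\lutt H^{s-1/2}$-loss on $w$ stated in the conclusion; the mixed terms $\T_{\del_j w}a$ and $\del_j\RR(a,w)-\RR(a,\del_jw)$ are estimated by analogous variants of Proposition \ref{para2}, using that the spectral localisation concentrates all these contributions at frequencies $\sim 2^j$ (or higher, summable into the Besov norm).

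For the second estimate, the same Bony splitting applied to each product $v_i\pt_iw$ yields a leading term $\sum_i[\del_j,\T_{v_i}]\pt_iw$. The convolution-and-Taylor argument gives $\|[\del_j,\T_{v_i}]\pt_iw\|_\ld\lesssim 2^{-j}\|\nb v\|_\li\sum_{|k-j|\le 4}\|\del_k\pt_iw\|_\ld$, and the trivial bound $\|\del_k\pt_iw\|_\ld\le\|\del_k\nb w\|_\ld$ lets me obtain, pointwise in $t$, after multiplication by $2^{js}$ and $\ell^2$-summation, the estimate
\[
\bigl\|\bigl(2^{js}\|[\del_j,\T_{v_i}]\pt_iw(t)\|_\ld\bigr)_{j\ge-1}\bigr\|_{\ell^2}\lesssim \|\nb v(t)\|_\li\|\nb w(t)\|_\hsm\lesssim \|\nb v(t)\|_\hsm\|\nb w(t)\|_\hsm,
\]
via $\hsm\into\li$. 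A Minkowski-type exchange between the $L^1_T$ norm and the $\ell^2$-summation then delivers exactly the stated time integral. The divergence-free hypothesis $\divv v=0$ is invoked when treating the secondary contributions $\T_{\pt_iw}v_i$ and $\RR(v_i,\pt_iw)$: rewriting $v\cdot\nb=\divv(v\,\cdot)$ when necessary allows the derivative to be transferred onto the low-frequency factor, so that no bare $L^2$-norm on $w$ survives and only $\nb v$ and $\nb w$ appear on the right.

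The main technical obstacle is the bookkeeping of time exponents throughout Bony's decomposition: the two inequalities require different pairings ($\litt\times\lutt$ of Hölder type $1/q_1+1/q_2=1$ for the first estimate, and a pointwise-in-$t$ estimate followed by a Minkowski exchange for the second), and each remainder must be routed through the correct continuity statement in Proposition \ref{para2} with matching indices. Particular care is needed to verify that no unwanted $L^2$-norm on $w$ (as opposed to $\nb w$) remains on the right-hand side of the second estimate; this is ensured by the combination of the divergence-free condition on $v$ and the pointwise inequality $\|\del_k\pt_iw\|_\ld\le\|\del_k\nb w\|_\ld$ noted above.
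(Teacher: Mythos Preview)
The paper does not prove this proposition; it is recorded as a particular case of \cite[Lemmas 8.7 and 8.11]{danchin2006} with no argument given. Your approach via Bony's decomposition together with the kernel/Taylor argument for the principal commutator $[\del_j,\T_a]$ is exactly the standard route (and the one taken in Danchin's proofs), so in spirit you are aligned with the source.

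One concrete issue: for the piece $\T_wa$ you invoke the chain $\lutt H^{s-1/2}\into\lut L^2\into\lut B_{\infty,\infty}^{-1}$, whose second step you yourself flag as valid only for $d\le 2$, whereas the proposition is stated for general $d\ge1$ with $s>1+d/2$. The repair is immediate and does not pass through $L^2$: use instead $\lutt H^{s-1/2}\into\lut H^{s-1}$ via \eqref{timespace2} with $\delta=1/2$, and then $H^{s-1}\into L^\infty$ (since $s-1>d/2$). Combined with the first continuity estimate in Proposition~\ref{para2} this gives $\|\T_wa\|_{\lutt\hs}\lesssim\|w\|_{\lut\li}\|\nb a\|_{\litt\hsm}$, which suffices in any dimension. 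The remainder of your sketch is correct in outline, though the phrase ``analogous variants'' for $\T_{\del_jw}a$ and $\del_j\RR(a,w)-\RR(a,\del_jw)$ hides the places where the $\ell^2$-summability in $j$ is least automatic; spelling those out would make the argument complete.
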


\paragraph{A paralinearisation estimate.} We now present the following result on composition of functions in time-space Besov spaces, adapted from \cite[Proposition 4]{danchin2010}. 

\begin{proposition}
    \label{compo2}
    Let $T>0$. Let $s>0$ and $1\le p,q,r\le\infty$. Let $F\in C^{\ps+2}(\R)$. For all $a\in\li([0,T]\times\R^d)$ with $\nb a\in\widetilde{L_T^q}B_{p,r}^{s-1}(\R^d)$, we have $\nb F(a)\in\widetilde{L_T^q}B_{p,r}^{s-1}(\R^d)$, and there exists a constant $C=C(s,F',\|a\|_{\lit\li})>0$, depending only on the quantities inside the brackets, such that 
    \[
    \|\nb F(a)\|_{\widetilde{L_T^q}B_{p,r}^{s-1}} \le C\|\nb a\|_{\widetilde{L_T^q}B_{p,r}^{s-1}}.
    \]
\end{proposition}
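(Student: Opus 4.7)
The plan is to adapt to the time-space framework $\widetilde{L_T^q}B_{p,r}^{s-1}$ the classical composition estimate for Besov spaces due to Danchin \cite[Proposition 4]{danchin2010}. The starting point is the pointwise chain rule $\nabla F(a) = F'(a)\,\nabla a$, which is legitimate because $a$ takes values in the compact interval $I_a := [-\|a\|_{\lit\li},\|a\|_{\lit\li}]$ on which $F$ is of class $C^{\ps+2}$; in particular $\|F'(a)\|_{\lit\li} \le \sup_{r \in I_a}|F'(r)|$ is finite and fully controlled by the data. The whole task thus reduces to estimating the product $F'(a)\,\nabla a$ in $\widetilde{L_T^q} B_{p,r}^{s-1}$.

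I would decompose this product via Bony's paraproduct formula:
\[
F'(a)\,\nabla a \;=\; \T_{F'(a)}(\nabla a) \;+\; \T_{\nabla a}(F'(a)) \;+\; \RR(F'(a),\nabla a).
\]
The first piece is controlled immediately by the first variant of Proposition \ref{para2}, giving
\[
\|\T_{F'(a)}(\nabla a)\|_{\widetilde{L_T^q} B_{p,r}^{s-1}} \;\lesssim\; \|F'(a)\|_{\lit\li}\,\|\nabla a\|_{\widetilde{L_T^q} B_{p,r}^{s-1}}.
\]
For the two remaining pieces I would proceed by induction on $\ps$. In the inductive step $s > 1$, the chain rule gives $\nabla F'(a) = F''(a)\,\nabla a$, so that the inductive hypothesis applied at regularity level $s-1$ (with $F'$ in the role of $F$, still $C^{\ps+1}$) yields
\[
\|\nabla F'(a)\|_{\widetilde{L_T^\infty}B_{p,r}^{s-2}} \;\le\; C\big(s,F'',\|a\|_{\lit\li}\big)\,\|\nabla a\|_{\widetilde{L_T^\infty}B_{p,r}^{s-2}},
\]
which, combined with the time-space tame estimate of Corollary \ref{tameestimates}, controls both the second paraproduct and the remainder. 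In the base case $0 < s \le 1$, I would rely on the negative-regularity variant of Proposition \ref{para2} with a small parameter $\ell > 0$, using the embedding $\li \hookrightarrow B^0_{\infty,\infty}$ together with Besov embeddings on $\nabla a$ to compensate for the loss $\ell$ in the estimates for $\T_{\nabla a}(F'(a))$ and $\RR(F'(a),\nabla a)$.

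The main technical obstacle will be the base case $0 < s \le 1$: the remainder $\RR(F'(a),\nabla a)$ requires \emph{positive} total regularity for Proposition \ref{para2} to apply, so one must first extract some Besov regularity for $F'(a)$ from the sole assumption that $F'$ is smooth and $a$ is bounded. This would be achieved by a Meyer-type linearisation
\[
F'(a) - F'(0) \;=\; \sum_{j \ge -1}\big(F'(S_{j+1}a) - F'(S_j a)\big) \;=\; \sum_{j \ge -1} m_j\,\del_j a,\qquad m_j := \int_0^1 F''\big(S_j a + \tau\,\del_j a\big)\,\dd\tau,
\]
where each $m_j$ is uniformly bounded by $\|F''\|_{C^0(I_a)}$ and the summands are spectrally localised near frequency $2^j$. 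Bookkeeping these localisations and the $\ell^r$-summability inherited from $\nabla a \in \widetilde{L_T^q} B_{p,r}^{s-1}$ transfers the Besov regularity from $a$ onto $F'(a)$, which finally allows the bootstrap to close and produces the stated constant $C(s,F',\|a\|_{\lit\li})$.
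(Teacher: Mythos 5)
Your route — reducing to the product $F'(a)\,\nb a$, applying Bony's decomposition, and closing by induction on $\ps$ with a Meyer linearisation only in the base case — is genuinely different from the paper's, which applies Meyer's first linearisation directly to $F(a)$ itself, writing $F(a)=F(S_1a)+\sum_{j\ge1}m_j\del_ja$, estimates $\|F(a)-F(S_1a)\|_{\widetilde{L_T^q}B_{p,r}^{s}}$ by hand (Bernstein, Leibniz and Faà di Bruno applied to the coefficients $m_j$), treats $\nb F(S_1a)$ separately, and only takes the gradient at the very end. The difference matters: in the paper's argument the regularity index in play is always the \emph{positive} index $s$ attached to $F(a)-F(S_1a)$, whereas your argument must estimate bilinear expressions landing in $B^{s-1}_{p,r}$, whose regularity index is non-positive for $0<s\le1$, and this is where the proposal does not close in the generality claimed.

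Concretely, the gap sits in $\T_{\nb a}(F'(a))$ and $\RR(F'(a),\nb a)$, the two terms you defer. The inductive step invokes Corollary \ref{tameestimates}, whose second term carries the factor $\|\nb a\|_{L_T^{q_4}\li}$; under the hypotheses $\nb a$ only belongs to $\widetilde{L_T^q}B_{p,r}^{s-1}$, which embeds into $\li$ only when $s-1>d/p$ (or $s-1=d/p$ with $r=1$), so this is unavailable for general $s>0$, $p<\infty$. Replacing the tame estimate by Proposition \ref{para2} does not help: since the Hölder conditions there force one factor into an $L^\infty$-based space, putting $\nb a$ in $B^{-\ell}_{\infty,\infty}$ with $\ell=1+d/p-s$ requires $F'(a)$ in $B^{d/p}_{p,r}$, while the Meyer linearisation of $F'$ only yields $F'(a)-F'(S_1a)\in B^{s}_{p,r}$ (insufficient when $s<d/p$), and the low-frequency piece $F'(S_1a)$ is bounded and smooth but fails to lie in any $B^{\sigma}_{p,r}$ with $p<\infty$, so it cannot occupy the high-frequency slot of the paraproduct at all. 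Likewise, for $0<s\le1$ the remainder must produce strictly more than $1-s$ of total positive regularity, and after the mandatory embedding $B^{s}_{p,r}\into B^{s-d/p}_{\infty,\infty}$ one is left with total regularity $2s-1-d/p$, which is neither positive nor $\ge s-1$ for small $s$. These are structural obstructions to the product approach, not bookkeeping; your sketch would go through essentially as written in the regime actually used in the body of the paper ($d=2$, $p=2$, $s>2$, where $\nb a\in\li$), but not for the proposition as stated.
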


As this proposition dictates the level of regularity one has to assume on the function $f$ in Theorem \ref{main}, we want to be precise regarding the required regularity on $F$. In the statement of \cite{danchin2010}, the function $F$ is supposed to be smooth, which is more than what is actually needed. In order to justify our refined version of this statement, we provide the full proof, which however follows the same lines as those of \cite[Proposition 4]{danchin2010} and \cite[Theorem 2.61]{bcd}.

\begin{proof}
    In view of \eqref{deltcont}, we fix a constant $C\ge1$ such that for all $j\in\Z$, we have  
    \begin{equation}
        \label{deltcontcomp}
        \|\del_ja\|_{\lit\li} \le C\|a\|_{\lit\li}\qquad\mathrm{and}\qquad \|S_ja\|_{\lit\li} \le C\|a\|_{\lit\li}.
    \end{equation}
    {\bf Step 1. Decomposition of $F(a)$.} Let $n\ge1$. We have 
    \[
    \sum_{j=1}^n \big(F(S_{j+1}a)-F(S_ja)\big) = F(S_{n+1}a)-F(S_1a).
    \]
    Now, since $F'$ is bounded on $B(0,2C\|a\|_{\lit\li})$, we have by the mean value theorem that 
    \[
    \|F(a)-F(S_{n+1}a)\|_{L_T^qL^p} \lesssim \|a-S_{n+1}a\|_{L_T^qL^p}.
    \]
    We can then use the second Bernstein inequality in \eqref{Bernstein} and the Hölder inequality (as $s>0$) to write that
    \[
    \|a-S_{n+1}a\|_{L_T^qL^p} \le \sum_{j\ge n+1}\|\del_ja\|_{L_T^qL^p} \lesssim \sum_{j\ge n+1}2^{-j}\|\del_j\nb a\|_{L_T^qL^p} \lesssim \Big(\sum_{j\ge n+1}2^{j(s-1)r}\|\del_j\nb a\|_{L_T^qL^p}^r\Big)^{1/r},
    \]
    which vanishes to $0$ as $n\to+\infty$, as $\nb a\in {\widetilde{L_T^q}B_{p,r}^{s-1}}$. Thus, $F(S_{n+1}a)$ converges to $F(a)$ in ${L_T^qL^p}$, and we have 
    \[
    \sum_{j\ge1} \big(F(S_{j+1}a)-F(S_ja)\big) = F(a)-F(S_1a).
    \]
    This can be rewritten as 
    \[
    F(a) = F(S_1a) + \sum_{j\ge1}m_j\del_ja,\qquad\mathrm{with}\qquad m_j := \int_0^1 F'(S_ja+t\del_ja)\dd t.
    \]
    From the localisation property \eqref{deltloc}, we then have 
    \[
    F(a)-F(S_1a) = \sum_{j\ge1}F_j,\qquad\mathrm{with}\qquad F_j:=m_j\del_j(a-\del_{-1}a).
    \]
    {\bf Step 2. Estimate of $\|\nb F(a)-\nb F(S_1a)\|_{\widetilde{L_T^q}B_{p,r}^{s-1}}$.} For any fixed $j\in\Z$, we can thus write
    \[
    2^{js}\del_j\big(F(a)-F(S_1a)\big) = 2^{js}\sum_{1\le k\le j}\del_jF_k + 2^{js}\sum_{k>j}\del_jF_k.
    \]
    On the one hand, using \eqref{deltcont}, we write that 
    \[
    2^{js}\sum_{k>j}\|\del_jF_k\|_{L_T^qL^p} \lesssim \sum_{k>j} 2^{(j-k)s}2^{ks}\|F_k\|_{L_T^qL^p} = \sum_{k\in\Z}b_{j-k}c_k = (b\ast c)_j,
    \]
    with the convention $F_j:=0$ for all $j\le0$, where the sequences $b=(b_n)_{n\in\Z}$ and $c=(c_n)_{n\in\Z}$ are defined by 
    \[
    b_n := \mathds{1}_{(-\infty,0[}(n)2^{ns},\qquad c_n := 2^{ns}\|F_n\|_{L_T^qL^p}.
    \]
    On the other hand, by the second Bernstein inequality in \eqref{Bernstein}, we have 
    \[
    2^{js}\sum_{1\le k\le j}\|\del_jF_k\|_{L_T^qL^p} \lesssim \sum_{1\le k\le j} 2^{-(j-k)(\ps+1-s)} \sup_{|\alpha|=\ps+1}2^{k(s-|\alpha|)}\|\pt^\alpha F_k\|_{L_T^qL^p} = \sum_{k\in\Z} d_{j-k}e_k = (d\ast e)_j,
    \]
    where the sequences $d=(d_n)_{n\in\Z}$ and $e=(e_n)_{n\in\Z}$ are defined by 
    \[
    d_n := \mathds{1}_{[0,\infty)}(n)2^{-n(\ps+1-s)},\qquad e_n := \sup_{|\alpha|=\ps+1}2^{n(s-|\alpha|)}\|\pt^\alpha F_n\|_{L_T^qL^p}.
    \]
    We then have, for all $j\in\Z$, 
    \[
    2^{js}\left\|\del_j\big(F(a)-F(S_1a)\big)\right\|_{L_T^qL^p} \lesssim (b\ast c)_j + (d\ast e)_j,
    \]
    from which we gather, after taking the $\ell^r$-norm and using the Young inequality, that 
    \begin{equation}
        \label{fafsua}
        \|F(a)-F(S_1a)\|_{\widetilde{L_T^q}B_{p,r}^s} \lesssim \Big\|\Big(\sup_{|\alpha|\in\{0,\ps+1\}}2^{j(s-|\alpha|)}\|\pt^\alpha F_j\|_{L_T^qL^p}\Big)_{j\in\Z}\Big\|_{\ell^r}.
    \end{equation}
    Let $\alpha\in\N^d$, $|\alpha|\le\ps+1$. From the Leibniz formula, we have 
    \[ 
    \pt^\al F_j = \pt^\al\left(m_j\del_j\big(a-\del_{-1}a\big)\right) = \sum_{\beta\le\alpha} \binom{\alpha}{\beta}\,\pt^\beta m_j\,\pt^{\alpha-\beta}\del_j\big(a-\del_{-1}a\big).
    \]
    Using the Hölder and Bernstein inequalities, we then obtain 
    \begin{equation}
        \label{leibnizdalfj}
        \|\pt^\al F_j\|_{L_T^qL^p} \lesssim \sum_{\beta\le\alpha} \|\pt^\beta m_j\|_{\lit\li}\,2^{j(|\alpha|-|\beta|)}\left\|\del_j\big(a-\del_{-1}a\big)\right\|_{L_T^qL^p}.
    \end{equation}
    Now, define $G:\R^2\to\R$ by 
    \[
    G(x,y) = \int_0^1 F'(x+ty)\dd t,
    \]
    and $\Theta:=(S_ja,\del_ja):\R^d\to\R^2$, so that $m_j=G(\Theta)$. From the Faà di Bruno formula \cite[Lemma 2.3]{bcd}, we have 
    \[
    \pt^\beta m_j = \pt^\beta G(\Theta) = \sum_{\mu,\nu}C_{\mu,\nu}(\pt^\mu G)(\Theta) \prod_{\substack{1\le|\gamma|\le|\beta| \\ k=1,2}}(\pt^\gamma\Theta_k)^{\nu_{\gamma_k}},
    \]
    where $C_{\mu,\nu}\in\N$, and the sum is taken over those $\mu\in\N^2$ such that $1\le|\mu|\le|\beta|$, and those $\nu=(\nu_{\gamma_k})_{\substack{1\le|\gamma|\le|\beta| \\ k=1,2}} $ with $\nu_{\gamma_k}\in\N^*$ satisfying
    \begin{equation}
        \label{identfaa}
        \sum_{1\le|\gamma|\le|\beta|}\nu_{\gamma_k} = \mu_k\quad(k=1,2),\qquad\mathrm{and}\qquad \sum_{\substack{1\le|\gamma|\le|\beta| \\ k=1,2}}\gamma\nu_{\gamma_k} = \beta.
    \end{equation}
    By the Bernstein inequalities \eqref{Bernstein}, we then have  
    \[
    \|\pt^\beta m_j\|_{\lit\li} \lesssim \sum_{\mu,\nu}\|(\pt^\mu G)(\Theta)\|_{\lit\li} \prod_{\substack{1\le|\gamma|\le|\beta| \\ k=1,2}}2^{j|\gamma|\nu_{\gamma_k}}\|\Theta_k\|_{\lit\li}^{\nu_{\gamma_k}}.
    \]
    Now, from \eqref{deltcontcomp}, we have 
    \[
    \Theta\in B(0,\sqrt2C\|a\|_{\lit\li}),\qquad\mathrm{and}\qquad \|\Theta_k\|_{\lit\li}\le C\|a\|_{\lit\li}\quad(k=1,2).
    \]
    Since all the derivatives up to order $\ps+1$ of $F'$, hence of $G$, are bounded on $B(0,\sqrt2C\|a\|_{\lit\li})$, we deduce that 
    \[
    \|\pt^\beta m_j\|_{\lit\li} \lesssim \sum_{\mu,\nu}\prod_{\substack{1\le|\gamma|\le|\beta| \\ k=1,2}}2^{j|\gamma|\nu_{\gamma_k}} \lesssim 2^{j|\beta|},
    \]
    where we have also used the second identity in \eqref{identfaa} for the last inequality. Plugging this into \eqref{leibnizdalfj}, we gather that 
    \[
    \|\pt^\al F_j\|_{L_T^qL^p} \lesssim 2^{j|\alpha|}\left\|\del_j\big(a-\del_{-1}a\big)\right\|_{L_T^qL^p}.
    \]
    Plugging this in turn into \eqref{fafsua} now yields 
    \[
    \|F(a)-F(S_1a)\|_{\widetilde{L_T^q}B_{p,r}^s} \lesssim \|a-\del_{-1}a\|_{\widetilde{L_T^q}B_{p,r}^{s}}.
    \]
    From the localisation property \eqref{deltloc}, we have, for any $j\in\Z$, 
    \[
    \del_j\big(a-\del_{-1}a\big) = \sum_{\substack{k\ge0 \\ |k-j|\le1}}\del_j\del_ka. 
    \]
    By the second Bernstein inequality in \eqref{Bernstein}, we deduce that 
    \[
    \left\|\del_j\big(a-\del_{-1}a\big)\right\|_{L_T^qL^p} \lesssim 2^{-j}\|\del_j\nb a\|_{L_T^qL^p},\qquad\mathrm{thus}\qquad \|a-\del_{-1}a\|_{\widetilde{L_T^q}B_{p,r}^{s}} \lesssim \|\nb a\|_{\widetilde{L_T^q}B_{p,r}^{s-1}}.
    \]
    All in all, we have 
    \begin{equation}
        \label{fafsua2}
        \|\nb F(a)-\nb F(S_1a)\|_{\widetilde{L_T^q}B_{p,r}^{s-1}} \lesssim \|F(a)-F(S_1a)\|_{\widetilde{L_T^q}B_{p,r}^s} \lesssim \|\nb a\|_{\widetilde{L_T^q}B_{p,r}^{s-1}}.
    \end{equation}
    {\bf Step 3. Estimate of $\|\nb F(S_1a)\|_{\widetilde{L_T^q}B_{p,r}^{s-1}}$.} Now, write that 
    \begin{equation}
        \label{nbfsa}
        \|\nb F(S_1a)\|_{\widetilde{L_T^q}B_{p,r}^{s-1}}^r = 2^{-(s-1)r}\|\del_{-1}\nb F(S_1a)\|_{L_T^qL^p}^r + \sum_{j\ge0}2^{j(s-1)r}\|\del_j\nb F(S_1a)\|_{L_T^qL^p}^r.
    \end{equation}
    As for the low frequency term, from \eqref{deltcont}, we have 
    \begin{equation}
        \label{lfnbfsa}
        \|\del_{-1}\nb F(S_1a)\|_{L_T^qL^p} \lesssim \|F'(S_1a)\nb S_1a\|_{L_T^qL^p} \lesssim \|\nb S_1a\|_{L_T^qL^p},
    \end{equation}
    where we have also used the fact that $F'$ is bounded on $B(0,C\|a\|_{\lit\li})$. As for the high frequencies, from the second Bernstein inequality in \eqref{Bernstein} and \eqref{deltcont}, we have, for any $j\ge0$, 
    \[
    2^{j(s-1)}\|\del_j\nb F(S_1a)\|_{L_T^qL^p} \lesssim 2^{-j(\ps-s+1)}\max_{|\alpha|=\ps}\|\pt^\alpha(F'(S_1a)\nb S_1a)\|_{L_T^qL^p}.
    \]
    From the Leibniz formula, we have 
    \[
    \|\pt^\alpha(F'(S_1a)\nb S_1a)\|_{L_T^qL^p} \lesssim \sum_{\beta\le\alpha}\|\pt^\beta F'(S_1a)\|_{\lit\li}\|\pt^{\alpha-\beta}\nb S_1a\|_{L_T^qL^p}.
    \]
    From the Faà di Bruno formula, it holds that
    \[
    \|\pt^\beta F'(S_1a)\|_{\lit\li} \lesssim \sum_{\mu,\nu} \|F^{(1+\mu)}(S_1a)\|_{\lit\li} \prod_{1\le|\gamma|\le|\beta|}\|\pt^\gamma S_1a\|_{\lit\li}^{\nu_\gamma},
    \]
    where the sum is taken over those $\mu\in\N$ such that $1\le\mu\le|\beta|$, and those $\nu=(\nu_\gamma)_{1\le|\gamma|\le|\beta|}$ with $\nu_\gamma\in\N^*$ satisfying
    \[
    \sum_{1\le|\gamma|\le|\beta|}\nu_\gamma = \mu \qquad\mathrm{and}\qquad \sum_{1\le|\gamma|\le|\beta|}\gamma\nu_\gamma = \beta.
    \]
    Since all the derivatives up to order $\ps$ of $F'$ are bounded on $B(0,C\|a\|_{\lit\li})$, we gather that 
    \[
    \|\pt^\beta F'(S_1a)\|_{\lit\li} \lesssim 1.
    \]
    From the first Bernstein inequality in \eqref{Bernstein}, we then obtain
    \begin{equation}
        \label{hfnbfsa}
        2^{j(s-1)}\|\del_j\nb F(S_1a)\|_{L_T^qL^p} \lesssim 2^{-j(\ps-s+1)}\|\nb S_1a\|_{L_T^qL^p}.
    \end{equation}
    Since $\ps-s+1>0$, plugging \eqref{lfnbfsa} and \eqref{hfnbfsa} into \eqref{nbfsa} yields 
    \[
    \|\nb F(S_1a)\|_{\widetilde{L_T^q}B_{p,r}^{s-1}} \lesssim \|\nb S_1a\|_{L_T^qL^p}.
    \]
    Now, from the localisation property \eqref{deltloc}, we have 
    \[
    \nb S_1a = \sum_{-1\le j\le1} \del_j\nb S_1a,
    \]
    so that, using the second inequality in \eqref{deltcont}, we obtain
    \[
    \|\nb S_1a\|_{L_T^qL^p}^r \lesssim \sum_{-1\le j\le1} 2^{j(s-1)r}\|\del_j\nb a\|_{L_T^qL^p}^r \le \|\nb a\|_{\widetilde{L_T^q}B_{p,r}^{s-1}}^r.
    \]
    We thus have 
    \[
    \|\nb F(S_1a)\|_{\widetilde{L_T^q}B_{p,r}^{s-1}} \lesssim \|\nb a\|_{\widetilde{L_T^q}B_{p,r}^{s-1}}.
    \]
    This completes the proof.  
\end{proof}

\subsection{Elliptic and transport estimates}

To complete this appendix, we present some elliptic and transport estimates in Sobolev spaces $\hs(\R^d)$, which are particular cases of more general results in Besov spaces $B_{p,r}^s(\R^d)$. 

\paragraph{Elliptic estimates.} We consider the elliptic equation 
\begin{equation}
    \label{elliptic}
    -\divv(a\nb\Pi)=\divv(F)\qquad\mathrm{in}\;\R^d,
\end{equation}
where $a=a(x)$ is a given smooth bounded function satisfying 
\begin{equation}
    \label{ellcond}
    a_* := \inf_{x\in\R^d}a(x)>0. 
\end{equation}

We have the following result, see \cite[Lemma 2]{danchin2010}.

\begin{lemma}
    \label{ellipticlow}
    For all vector fields $F\in\ld(\R^d)$, there exists a unique (up to constant functions) tempered distribution $\Pi$ satisfying equation \eqref{elliptic}, with $\nb\Pi\in\ld(\R^d)$, satisfying the estimate 
    \[
    a_*\|\nb\Pi\|_\ld\le\|F\|_\ld.
    \]
\end{lemma}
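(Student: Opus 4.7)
The plan is to apply the Lax--Milgram theorem on a suitable homogeneous Sobolev space modulo constants. First, I would define the Hilbert space $\mathcal{H} := \{u\in\mathcal{S}'(\R^d)/\R : \nb u\in\ld(\R^d)\}$ equipped with the inner product $(u,v)_{\mathcal{H}} := \int\nb u\cdot\nb v\,\dd x$. Completeness follows from the Sobolev embedding $\mathcal{H}\hookrightarrow L^{2d/(d-2)}(\R^d)$ when $d\ge3$, and from local Poincaré inequalities (after subtracting constants over balls) when $d=2$.

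On $\mathcal{H}$, introduce the bilinear form
\[
B(u,v) := \int_{\R^d} a\,\nb u\cdot\nb v\,\dd x.
\]
The hypothesis \eqref{ellcond} together with $a\le\|a\|_\li$ yields coercivity $B(u,u)\ge a_*\|\nb u\|_\ld^2$ and continuity $|B(u,v)|\le\|a\|_\li\|\nb u\|_\ld\|\nb v\|_\ld$. Since $F\in\ld$, the linear form $\ell(v) := -\int F\cdot\nb v\,\dd x$ is bounded on $\mathcal{H}$ with $|\ell(v)|\le\|F\|_\ld\|\nb v\|_\ld$. The Lax--Milgram theorem then produces a unique $\Pi\in\mathcal{H}$ such that $B(\Pi,v)=\ell(v)$ for every $v\in\mathcal{H}$, which is the weak formulation of \eqref{elliptic}. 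The energy estimate is obtained by taking $v=\Pi$ as test function:
\[
a_*\|\nb\Pi\|_\ld^2 \le B(\Pi,\Pi) = \ell(\Pi) \le \|F\|_\ld\|\nb\Pi\|_\ld,
\]
whence $a_*\|\nb\Pi\|_\ld\le\|F\|_\ld$ (trivially if $\nb\Pi=0$). Uniqueness up to constants is built into the quotient formulation: two solutions $\Pi_1,\Pi_2$ yield $\Pi_1-\Pi_2\in\mathcal{H}$ with $B(\Pi_1-\Pi_2,v)=0$ for all $v$; testing against $\Pi_1-\Pi_2$ itself and using coercivity forces $\nb(\Pi_1-\Pi_2)=0$, so $\Pi_1-\Pi_2$ is a constant tempered distribution.

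The main obstacle is handling the functional setup on the whole space, in particular ensuring that every element of $\mathcal{H}$ lifts to a tempered distribution modulo constants. In dimension $d\ge3$ this is immediate via the Sobolev embedding recalled above; in dimension $d=2$ one must use local Poincaré estimates to extract a tempered distributional representative from the sole knowledge that $\nb\Pi\in\ld$. An alternative route that avoids homogeneous spaces entirely would be to regularise by a parameter $\eta>0$, solve $-\divv(a\nb\Pi_\eta)+\eta\Pi_\eta=\divv(F)$ in $H^1(\R^d)$ by standard Lax--Milgram (where coercivity on $H^1$ is free), derive the $\eta$-independent bound $a_*\|\nb\Pi_\eta\|_\ld\le\|F\|_\ld$ by the same test-function computation, and pass to the limit $\eta\to0$ along a weakly convergent subsequence of $\nb\Pi_\eta$ in $\ld$.
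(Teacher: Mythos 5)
The paper does not actually prove this lemma: it is quoted from \cite[Lemma 2]{danchin2010}, so there is no in-paper argument to compare against. Your Lax--Milgram construction on the space of (tempered distributions modulo constants) with gradient in $\ld$ is the standard proof of this statement, and it is correct: the bilinear form is coercive with constant $a_*$ and bounded by $\|a\|_\li$, the weak formulation $\int a\nb\Pi\cdot\nb v=-\int F\cdot\nb v$ is exactly \eqref{elliptic}, and testing with $\Pi$ gives the stated bound. The only two points that deserve an explicit word of care are (i) in $d=2$ the completeness of $\mathcal{H}$ and the existence of a tempered representative, which you correctly flag and which is handled by subtracting annulus averages (Poincar\'e--Wirtinger) before truncating, and (ii) for uniqueness among \emph{arbitrary} tempered solutions with $\nb\Pi\in\ld$ (not just the Lax--Milgram one), one needs the density of compactly supported smooth functions modulo constants in $\mathcal{H}$ in order to test the homogeneous equation against $\Pi_1-\Pi_2$ itself; this is standard but is the one step you pass over silently. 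Your alternative route via the zeroth-order regularisation $\eta\Pi_\eta$ is equally valid and avoids the homogeneous-space subtleties altogether.
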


We now state some higher order estimates for equation \eqref{elliptic}, see \cite[Proposition 7]{danchin2010}.

\begin{proposition}
    \label{elliptichigh}
    Let $s>1+d/2$. Let $a$ be a bounded function satisfying \eqref{ellcond}, and such that $\nb a\in\hsm(\R^d)$. Let $F\in\ld(\R^d)$ be such that $\divv(F)\in\hsm(\R^d)$. Then, equation \eqref{elliptic} admits a unique (up to constant functions) solution $\Pi$ such that $\nb\Pi\in\hs(\R^d)$, and there exist constants $C=C(d,s)>0$ and $\gamma=\gamma(d,s)>0$ such that
    \[
    a_*\|\nb\Pi\|_\hs \le C\left(\|\divv(F)\|_\hsm + \left(1+a_*^{-1}\|\nb a\|_\hsm\right)^\gamma\|F\|_\ld\right).
    \]
\end{proposition}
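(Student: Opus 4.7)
The plan is to reduce the problem to frequency-by-frequency energy estimates, exploiting the ellipticity $a\ge a_*$ at each dyadic level. Existence and uniqueness (up to an additive constant) of a distribution $\Pi$ with $\nb\Pi\in\ld(\R^d)$ come directly from Lemma \ref{ellipticlow}, together with the $L^2$ bound $a_*\|\nb\Pi\|_\ld\le\|F\|_\ld$; only the $\hs$ upgrade requires real work.

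The first main step is the dyadic energy estimate. Applying $\del_j$ to \eqref{elliptic} and commuting it past the coefficient $a$ yields
\[
-\divv\bigl(a\,\nb\del_j\Pi\bigr) = \del_j\divv F + \divv\bigl([\del_j,a]\nb\Pi\bigr).
\]
For $j\ge 0$, testing against $\del_j\Pi$, integrating by parts and using $a\ge a_*$, together with the Bernstein inequality $\|\del_j\Pi\|_\ld\lesssim 2^{-j}\|\nb\del_j\Pi\|_\ld$, gives
\[
a_*\|\nb\del_j\Pi\|_\ld \lesssim 2^{-j}\|\del_j\divv F\|_\ld + \|[\del_j,a]\nb\Pi\|_\ld.
\]
Multiplying by $2^{js}$, $\ell^2$-summing over $j\ge 0$ and handling the low-frequency block $\del_{-1}\nb\Pi$ via Lemma \ref{ellipticlow} then leads to
\[
a_*\|\nb\Pi\|_\hs \lesssim \|\divv F\|_\hsm + \|F\|_\ld + \bigl\|\bigl(2^{js}\|[\del_j,a]\nb\Pi\|_\ld\bigr)_{j}\bigr\|_{\ell^2}.
\]

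The second main step is the control of the commutator term. Using Bony's paraproduct decomposition (Proposition \ref{para2}) and the localisation properties \eqref{supp}--\eqref{deltloc}, one obtains a commutator estimate of the form
\[
\bigl\|\bigl(2^{j\sigma}\|[\del_j,a]\nb\Pi\|_\ld\bigr)_{j}\bigr\|_{\ell^2} \lesssim \|\nb a\|_\hsm\,\|\nb\Pi\|_{H^\sigma}
\]
for some $\sigma<s$ (admissible thanks to $s>1+d/2$). Interpolating $\|\nb\Pi\|_{H^\sigma}$ between $\|\nb\Pi\|_\ld$ and $\|\nb\Pi\|_\hs$ and applying a weighted Young inequality allows one to absorb a small fraction of $a_*\|\nb\Pi\|_\hs$ back into the left-hand side, at the cost of a polynomial factor in $(1+a_*^{-1}\|\nb a\|_\hsm)$ multiplying $\|F\|_\ld$. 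Substituting the $L^2$ bound $\|\nb\Pi\|_\ld\le a_*^{-1}\|F\|_\ld$ then closes the loop and produces the claimed inequality with an exponent $\gamma=\gamma(d,s)>0$.

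The main technical obstacle is precisely this circular appearance of $\|\nb\Pi\|_\hs$ through the commutator: the naive bound already involves the norm one is trying to estimate, and one must choose the interpolation exponent so as to preserve an arbitrary positive constant in front of the commutator while only generating an $a_*$-independent factor in front of $\|\nb\Pi\|_\hs$. Careful tracking of the $a_*$-powers is what allows all the coefficient dependence to be packaged into a single factor $(1+a_*^{-1}\|\nb a\|_\hsm)^\gamma$. Finally, the qualitative statement $\nb\Pi\in\hs$ (as opposed to the a priori bound) is obtained by a standard approximation: smooth $a$ and $F$ so that classical $\hs$ solutions exist, then pass to the limit using the uniform estimate just derived; uniqueness up to constants in this class follows immediately by subtracting two solutions and applying Lemma \ref{ellipticlow}.
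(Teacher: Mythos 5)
Your argument is correct, and it is essentially the proof of the result the paper relies on: the paper does not prove Proposition \ref{elliptichigh} itself but cites \cite[Proposition 7]{danchin2010}, whose proof proceeds exactly as you describe — localise with $\del_j$, test against $\del_j\Pi$ using the ellipticity $a\ge a_*$ and Bernstein to produce $\|\divv F\|_\hsm$, control the commutator $[\del_j,a]\nb\Pi$ by $\|\nb a\|_\hsm\|\nb\Pi\|_{H^\sigma}$ with $\sigma<s$, then interpolate and absorb via Young, replacing $\|\nb\Pi\|_\ld$ by $a_*^{-1}\|F\|_\ld$ from the low-order estimate of Lemma \ref{ellipticlow}. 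No gaps; the approximation remark at the end correctly handles the qualitative $\hs$ regularity needed to legitimise the a priori estimate.
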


\paragraph{Transport estimates.}

We consider the transport equation

\begin{equation}
    \label{transport}
    \left\{
    \begin{aligned}
    & (\dt + v\cdot\nb)f = g, \\
    & f_{|t=0}=f_0.
\end{aligned}
\right.
\end{equation}
\par
The following statement is a particular case of \cite[Theorem 3.14 and Remark 3.15]{bcd}.
\begin{theorem}
    \label{th:transport1}
    Let $T>0$ and $s>1+d/2$. Let $f_0\in\hs(\R^d)$ and $g\in\lutt\hs(\R^d)$. Let $v$ be a divergence-free vector field such that $\nb v\in\lut\hsm(\R^d)$. There exists a constant $C=C(d,s)>0$ such that any solution $f$ of equation \eqref{transport} satisfies
    \[
    \|f\|_{\litt\hs} \le Ce^{C\|\nb v\|_{\lut\hsm}}\Big(\|f_0\|_\hs + \|g\|_{\lutt\hs}\Big).
    \]
\end{theorem}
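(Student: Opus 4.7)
My plan is to adapt the standard Littlewood–Paley energy argument for transport equations to the setting of the tilde spaces $\widetilde{L_T^q}H^s$. The key point is that the time integration must be performed \emph{before} the $\ell^2$ summation over dyadic indices, which is precisely what these spaces are designed to accommodate, and it is also exactly what the commutator estimate in Proposition \ref{commut} is tailored for.

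First, I would localize the equation in frequency: applying $\Delta_j$ to \eqref{transport} gives
\[
(\dt + v\cdot\nb)\Delta_j f = \Delta_j g + [v\cdot\nb,\Delta_j]f, \qquad (\Delta_j f)_{|t=0} = \Delta_j f_0.
\]
Since $\divv v = 0$, the standard energy identity (multiply by $\Delta_j f$ and integrate by parts in $\R^d$) yields
\[
\frac{1}{2}\ddt\|\Delta_j f\|_{L^2}^2 \le \|\Delta_j f\|_{L^2}\Big(\|\Delta_j g\|_{L^2} + \big\|[v\cdot\nb,\Delta_j]f\big\|_{L^2}\Big),
\]
so that after dividing by $\|\Delta_j f\|_{L^2}$ and integrating in time we obtain
\[
\|\Delta_j f\|_{L_t^\infty L^2} \le \|\Delta_j f_0\|_{L^2} + \|\Delta_j g\|_{L_t^1 L^2} + \big\|[v\cdot\nb,\Delta_j]f\big\|_{L_t^1 L^2}, \qquad \forall\, t\in[0,T].
\]

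Next, I multiply by $2^{js}$ and take the $\ell^2$-norm over $j\ge -1$. Because the time supremum is taken \emph{inside} the $\ell^2$ norm, the left-hand side is precisely $\|f\|_{\widetilde{L_t^\infty}H^s}$, and the first two terms on the right become $\|f_0\|_{H^s}$ and $\|g\|_{\widetilde{L_t^1}H^s}$. For the commutator piece, I invoke the second bound in Proposition \ref{commut}, which (since $s>1+d/2$) delivers
\[
\left\|\Big(2^{js}\big\|[v\cdot\nb,\Delta_j]f\big\|_{L_t^1L^2}\Big)_{j\ge -1}\right\|_{\ell^2} \lesssim \int_0^t \|\nb v(\tau)\|_{H^{s-1}} \|\nb f(\tau)\|_{H^{s-1}}\,\dd\tau \lesssim \int_0^t \|\nb v(\tau)\|_{H^{s-1}} \|f\|_{\widetilde{L_\tau^\infty}H^s}\,\dd\tau.
\]
Combining these bounds produces the integral inequality
\[
\|f\|_{\widetilde{L_t^\infty}H^s} \le C\Big(\|f_0\|_{H^s} + \|g\|_{\widetilde{L_t^1}H^s}\Big) + C\int_0^t \|\nb v(\tau)\|_{H^{s-1}}\, \|f\|_{\widetilde{L_\tau^\infty}H^s}\,\dd\tau.
\]

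Finally, I apply the Grönwall lemma in the variable $t$ to the monotone quantity $t\mapsto\|f\|_{\widetilde{L_t^\infty}H^s}$, noting that $\|g\|_{\widetilde{L_t^1}H^s}$ is also non-decreasing in $t$ and hence can be pulled outside as a prefactor bounded by $\|g\|_{\widetilde{L_T^1}H^s}$. This yields the claimed estimate
\[
\|f\|_{\widetilde{L_T^\infty}H^s} \le Ce^{C\|\nb v\|_{L_T^1 H^{s-1}}}\Big(\|f_0\|_{H^s} + \|g\|_{\widetilde{L_T^1}H^s}\Big).
\]
The only genuinely delicate step is the commutator bound, but since that lemma is already stated and accepted earlier in the appendix, the remaining work is purely routine book-keeping; the rest of the proof is a standard Littlewood–Paley energy argument, with the only subtlety being the swap of time integration and $\ell^2$-summation that motivates the use of the tilde spaces in the first place.
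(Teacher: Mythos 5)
Your argument is correct and is essentially the standard proof of this estimate: the paper itself does not reprove it but cites \cite[Theorem 3.14 and Remark 3.15]{bcd}, whose proof is exactly this dyadic energy estimate combined with the commutator bound of Proposition \ref{commut} and Grönwall, with the time integration performed before the $\ell^2$-summation. The only (routine) points left implicit are the justification of dividing by $\|\Delta_j f\|_{L^2}$ when it vanishes and the a priori finiteness of $\|f\|_{\widetilde{L_t^\infty}H^s}$ needed for Grönwall.
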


We now provide the following adaptation of \cite[Theorem 3.19]{bcd}.

\begin{theorem}
    \label{th:transport2}
    Let $T>0$ and $s>1+d/2$. Let $f_0\in\hs(\R^d)$ and $g\in\lut\hs(\R^d)$. Let $v$ be a divergence-free vector field such that $v\in L_T^qB_{\infty,\infty}^{-M}(\R^d)$, for some $q>1$ and $M>0$, with $\nb v\in\lut\hsm(\R^d)$. Then, equation \eqref{transport} has a unique solution $f\in\ct\hs(\R^d)$, and the inequality of Theorem \ref{th:transport1} is satisfied.
\end{theorem}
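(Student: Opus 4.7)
My plan is to regularise the problem, apply Theorem~\ref{th:transport1} for uniform bounds, pass to a weak limit, and then upgrade the limit from $\litt\hs$ membership to strong time continuity in $\hs$; uniqueness will be handled separately by an $\ld$ energy estimate.

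\textbf{Construction via regularisation.} Mollify $v$ and $g$ into sequences $v^n, g^n$ of smooth functions, preserving $\divv v^n = 0$, with $\nb v^n \to \nb v$ in $\lut\hsm$ and $g^n \to g$ in $\lut\hs$. For such smooth data, the method of characteristics yields a unique solution $f^n \in C^1([0,T], H^\infty(\R^d))$ to the regularised transport equation, and Theorem~\ref{th:transport1} gives the uniform bound
\[
\|f^n\|_{\litt\hs} \le Ce^{C\|\nb v^n\|_{\lut\hsm}}\big(\|f_0\|_\hs + \|g^n\|_{\lutt\hs}\big),
\]
using $\lut\hs \inj \lutt\hs$. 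By the Fatou property (Theorem~\ref{th:fatou}), up to extraction, $f^n \weaks f$ in $\litt\hs$. To pass to the limit in the transport term, observe that $v^n$ is uniformly bounded in $L_T^q B_{\infty,\infty}^{-M}$ while $\nb f^n \in \lit\hsm$ is uniformly bounded; the paraproduct decomposition then shows that $v^n\cdot\nb f^n$, and hence $\pt_t f^n$, is uniformly bounded in $L_T^q H^{-N}$ for some large $N$. An Aubin-Lions argument gives strong convergence $f^n \to f$ in $\ct H^\sig(B_R)$ for each $\sig < s$ and $R > 0$, allowing passage to the limit in \eqref{transport}; the inequality of Theorem~\ref{th:transport1} transfers to $f$ by lower semicontinuity.

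\textbf{Time continuity in $\hs$.} This is the principal upgrade over Theorem~\ref{th:transport1}. Applying $\del_j$ to the transport equation and pairing with $\del_j f$, exploiting $\divv v = 0$, yields the dyadic identity
\[
\frac{1}{2}\frac{d}{dt}\|\del_j f(t)\|_\ld^2 = \int \del_j g\cdot\del_j f\,\dd x - \int [\del_j, v\cdot\nb]f\cdot \del_j f\,\dd x,
\]
whose right-hand side is integrable in time thanks to Proposition~\ref{commut}. Hence each map $t\mapsto\|\del_j f(t)\|_\ld$ is continuous. Combining $f\in\litt\hs$ with $\pt_t f\in L_T^q H^{-N}$ yields weak-$*$ continuity of $f:[0,T]\to\hs$. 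Then the blockwise continuity together with dominated convergence over the dyadic indices (using $\|f\|_{\litt\hs}<\infty$ to dominate the tail of $\sum_j 2^{2js}\|\del_j f(t)\|_\ld^2$ uniformly in $t$) upgrades weak to strong continuity, so $f\in\ct\hs$.

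\textbf{Uniqueness and main obstacle.} Given two solutions $f_1, f_2$ with identical data, $\delta f := f_1 - f_2$ solves $(\pt_t + v\cdot\nb)\delta f = 0$ with $\delta f(0) = 0$. Since $s > 1 + d/2$ implies $\hsm\inj\li$ and thus $\nb v \in L_T^1 \li$, and since $\delta f \in \ct\ld$, an $\ld$ energy estimate using $\divv v = 0$ gives $\frac{d}{dt}\|\delta f\|_\ld^2 = 0$, so $\delta f \equiv 0$. The main technical obstacle is the continuity step: justifying the dyadic energy identity for the non-smooth limit requires first establishing it for $f^n$ and carefully passing to the limit, and the tail control in the dominated convergence argument must be uniform in $t$. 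The hypothesis $v\in L_T^q B_{\infty,\infty}^{-M}$ with $q > 1$ is used precisely to provide the $L_T^q H^{-N}$ bound on $\pt_t f$, which simultaneously powers the Aubin-Lions compactness in the first step and the weak-$*$ continuity needed to close the continuity argument in $\hs$.
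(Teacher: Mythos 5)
The paper offers no proof of this statement: it is presented as an adaptation of \cite[Theorem 3.19]{bcd}, and your proposal reconstructs exactly that standard route (regularisation, uniform bounds from Theorem \ref{th:transport1}, compactness, blockwise continuity plus uniform control of the dyadic tail via the $\litt\hs$ norm, $\ld$ energy estimate for uniqueness). The architecture is correct; the continuity and uniqueness steps in particular are sound, since $\hsm\into\li$ indeed gives $\nb v\in\lut\li$ and the tail of $\sum_j2^{2js}\|\del_jf(t)\|_\ld^2$ is dominated uniformly in $t$ by the tilde norm.

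Two points need repair. The more serious one is the claim that the paraproduct decomposition bounds $v^n\cdot\nb f^n$ uniformly in $L_T^qH^{-N}(\R^d)$: this is false as written, because the paraproduct $\T_{\nb f^n}v^n$ and the remainder only land in $L^\infty$-based Besov spaces, and $B_{\infty,\infty}^{-M}(\R^d)$ does not embed into $H^{-N}(\R^d)$ for any $N$ (an $L^\infty$-type bound gives no decay at infinity; nonzero constants already lie in $B_{\infty,\infty}^{-M}$ but in no $H^{-N}$). So no \emph{global} $L^2$-based bound on $\pt_tf^n$ follows this way. The fix is elementary in the present Sobolev setting: writing $v=\del_{-1}v+(\id-\del_{-1})v$, the Bernstein inequalities give $\|(\id-\del_{-1})v\|_{\li}\lesssim\|\nb v\|_{\hsm}$ (using $s-1>d/2$) and $\|\del_{-1}v\|_\li\lesssim\|v\|_{B_{\infty,\infty}^{-M}}$, whence $v\in\lut\li$ and $v^n\cdot\nb f^n$ is uniformly bounded in $\lut\ld$; the version of the Aubin--Lions lemma in \cite{Simon1987} accepts an $L^1$-in-time bound on $\pt_tf^n$, so the compactness step survives (alternatively, work in $H^{-N}(B_R)$, where the local embedding of $B_{\infty,\infty}^{-M}$ does hold). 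In particular the hypothesis $q>1$ is not what drives the compactness here; it is inherited from the general Besov framework of \cite{bcd}. The minor point: you mollify $v$ and $g$ but not $f_0$, so $f^n$ cannot be in $C^1([0,T],H^\infty)$; either mollify $f_0$ as well or note that $\hs$ spatial regularity of $f^n$ is all the argument needs.
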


We finally state a result for estimating the gradient of a solution to \eqref{transport}, which can be obtained following the proof of \cite[Theorem 3.14]{bcd}. 
\begin{proposition}
    \label{trgrad}
    Let $T>0$ and $s>1+d/2$. Let $v$ be a divergence-free vector field such that $\nb v\in\lut\hs(\R^d)$. Let $f_0$ be such that $\nb f_0\in\hs(\R^d)$. Let $g$ be such that $\nb g\in\lutt\hs(\R^d)$. Then, there exists a constant $C=C(d,s)>0$ such that any solution $f$ of equation \eqref{transport} satisfies
    \[
    \|\nb f\|_{\litt\hs} \le Ce^{C\|\nb v\|_{\lut\hs}}\Big(\|\nb f_0\|_\hs + \|\nb g\|_{\lutt\hs}\Big).
    \]
\end{proposition}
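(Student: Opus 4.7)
The plan is to differentiate the transport equation \eqref{transport} in space and then re-apply the dyadic-block strategy behind Theorem \ref{th:transport1} (i.e.\ \cite[Theorem 3.14]{bcd}). Setting $F := \nb f$ and using the identity $\nb(v\cdot\nb f) = (v\cdot\nb)F + (\nb v)^{\mathrm{T}} F$, one sees that $F$ solves the modified transport equation
\[
(\dt + v\cdot\nb)F = \nb g - (\nb v)^{\mathrm{T}} F,\qquad F_{|t=0} = \nb f_0.
\]
The forcing now carries the extra term $(\nb v)^{\mathrm{T}} F$, coming from the commutator of $\nb$ with the transport operator; this is the only structural difference with the scalar equation for $f$.

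\textbf{Dyadic energy estimate.} For each $j \ge -1$, I would apply $\del_j$ to the above equation, multiply by $\del_j F$ in $L^2(\R^d)$, and use $\divv v = 0$ to reduce the transport contribution to a commutator, obtaining
\[
\ddt \|\del_j F\|_\ld \le \|\del_j \nb g\|_\ld + \|\del_j((\nb v)^{\mathrm{T}} F)\|_\ld + \|[v\cdot\nb,\del_j]F\|_\ld.
\]
Integrating on $[0,t]$, multiplying by $2^{js}$, and taking the $\ell^2(j)$-norm then yields
\[
\|F\|_{\widetilde{L_t^\infty}\hs} \le \|\nb f_0\|_\hs + \|\nb g\|_{\widetilde{L_t^1}\hs} + \|(\nb v)^{\mathrm{T}} F\|_{\widetilde{L_t^1}\hs} + \left\|\bigl(2^{js}\|[v\cdot\nb,\del_j]F\|_{L_t^1 L^2}\bigr)_{j\ge-1}\right\|_{\ell^2},
\]
where working with the tilde-type time-space norms (time integration before $\ell^2$ summation) is crucial in order to later apply Proposition \ref{commut} directly.

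\textbf{Commutator and product controls, then Grönwall.} The second item of Proposition \ref{commut} gives
\[
\left\|\bigl(2^{js}\|[v\cdot\nb,\del_j]F\|_{L_t^1 L^2}\bigr)_{j\ge-1}\right\|_{\ell^2} \lesssim \int_0^t \|\nb v(\tau)\|_\hsm \|\nb F(\tau)\|_\hsm \dd\tau \le \int_0^t \|\nb v(\tau)\|_\hs \|F\|_{\widetilde{L_\tau^\infty}\hs} \dd\tau,
\]
using the trivial embedding $\|\nb F\|_\hsm \le \|F\|_\hs$ together with the monotonicity of $\tau\mapsto\|F\|_{\widetilde{L_\tau^\infty}\hs}$. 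For the extra source term, the tame estimate of Corollary \ref{tameestimates} with time exponents $(q,q_1,q_2)=(1,1,\infty)$, combined with the algebra embedding $\hs\hookrightarrow\li$ valid for $s>1+d/2$, yields
\[
\|(\nb v)^{\mathrm{T}} F\|_{\widetilde{L_t^1}\hs} \lesssim \int_0^t \|\nb v(\tau)\|_\hs \|F\|_{\widetilde{L_\tau^\infty}\hs} \dd\tau.
\]
Plugging these two bounds back in produces an integral Grönwall-type inequality for the nondecreasing map $t \mapsto \|F\|_{\widetilde{L_t^\infty}\hs}$, and the Grönwall lemma at $t=T$ delivers the claimed exponential estimate. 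The main delicate point is the simultaneous handling of the extra term $(\nb v)^{\mathrm{T}} F$ and of the commutator in the \emph{tilde-type} time-space Besov framework: they must both be placed in a $\widetilde{L_t^1}\hs$ slot with $F$ recovered in $\widetilde{L_\tau^\infty}\hs$ so that the Grönwall argument closes with the single exponential weight $e^{C\|\nb v\|_{\lut\hs}}$.
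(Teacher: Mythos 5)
Your argument is correct and is exactly the route the paper intends: the paper gives no proof of Proposition \ref{trgrad}, merely noting that it "can be obtained following the proof of \cite[Theorem 3.14]{bcd}", and your plan (differentiate the equation so that $F=\nb f$ solves a transport equation with the extra source $(\nb v)F$, run the dyadic energy estimate, invoke the commutator bound of Proposition \ref{commut}, and close with Gr\"onwall in the Chemin--Lerner framework) is precisely that adaptation, with the hypothesis $\nb v\in\lut\hs$ (one derivative more than in Theorem \ref{th:transport1}) used exactly where it is needed, namely to place $(\nb v)F$ in $\hs$. One small repair: Corollary \ref{tameestimates} with exponents $(q,q_1,q_2)=(1,1,\infty)$ gives $\|(\nb v)F\|_{\widetilde{L_t^1}\hs}\lesssim\big(\int_0^t\|\nb v(\tau)\|_\hs\dd\tau\big)\|F\|_{\widetilde{L_t^\infty}\hs}$, i.e.\ with $\|F\|_{\widetilde{L_\tau^\infty}\hs}$ \emph{outside} the time integral, which does not feed into Gr\"onwall; the integrand-inside form you state is nevertheless true and is obtained more simply via the embedding $\|\cdot\|_{\widetilde{L_t^1}\hs}\le\|\cdot\|_{L_t^1\hs}$ from \eqref{timespace1} together with the pointwise-in-time tame estimate $\|(\nb v)F\|_\hs\lesssim\|\nb v\|_\hs\|F\|_\hs$ valid for $s>d/2$. (Also, with the paper's convention $(\nb v)_{ij}=\pt_iv_j$ the extra term is $(\nb v)F$ rather than $(\nb v)^{\mathrm T}F$, but this is immaterial for the estimates.)
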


\paragraph{Acknowledgements.} I am grateful to my advisor Francesco Fanelli for introducing me to this subject and for his technical advice. My visits at the Basque Center for Applied Mathematics (BCAM) in January and June 2025 were funded by BCAM and the project CRISIS (ANR-20-CE40-0020-01), operated by the French National Research Agency (ANR). I also thank the people at BCAM for their hospitality.


\addcontentsline{toc}{section}{References}

\end{document}